\newtheorem{theorem}{Theorem}[section]
\newtheorem{lemma}[theorem]{Lemma}
\newtheorem{corollary}[theorem]{Corollary}
\newtheorem{proposition}[theorem]{Proposition}
\theoremstyle{definition}
\newtheorem{example}[theorem]{Example}
\newtheorem{definition}[theorem]{Definition}
\numberwithin{equation}{section}
\def \tri {\ensuremath{\triangle}}
\def \bb {\textbf}
\def \ii {\textit}
\def\({\left(}
\def\){\right)}
\newcommand{\cT}{\mathcal{T}}
\newcommand{\cA}{\mathcal{A}}
\newcommand{\nn}{\mathbb{N}}
\newcommand{\zz}{\mathbb{Z}}
\newcommand{\rr}{\mathbb{R}}
\def\bl{\boldsymbol{l}}
\def\bt{\boldsymbol{t}}
\def\bk{\boldsymbol{k}}
\def\bbf{\boldsymbol{f}}
\newcommand{\la}{\lambda }
\newcommand{\de}{\delta }
\newcommand{\si}{\sigma }
\newcommand{\om}{\omega }
\newcommand{\Om}{\Omega }
\newcommand{\arr}[2]{\xrightarrow[#2]{#1}}
\newcommand{\boxif}[1]{
\ifx\\#1\\
\else%
\fbox{\scriptsize #1}%
\fi
}
\newcommand{\freq}{\op{freq}}
\def\<{\left\langle }
\def\>{\right\rangle}
\def\<-{\leftarrow }
\def\->{\rightarrow}
\def\<={\Leftarrow }
\def\=>{\Rightarrow}
\def\op{\operatorname}
\def\pig{\tilde\pi_g}
\def\lag{\tilde\lambda}
\def\pigs{\tilde\pi_{g*}}
\def\lags{\tilde\lambda_{*}}
\def\ogx{{\Omega_{g,x}}}
\def\AA{\ensuremath{\mathcal{A}}}
\def\GG{\ensuremath{\mathcal{G}}}
\title{On  Mealy-Moore  coding  and\\  images  of  Markov  measures}
\author{Rostislav Grigorchuk\thanks{The first author graciously acknowledges
support from the Simons Foundation through Collaboration Grant \#527814.}, 
Roman Kogan and Yaroslav Vorobets}
\date{}
\begin{document}

\maketitle

\begin{abstract}
We study the images of the Markov measures under transformations generated
by the Mealy automata.  We find conditions under which the image measure is 
absolutely continuous or singular relative to the Markov measure.  Also, we 
determine statistical properties of the image of a generic sequence.

11 references, 7 figures.  UDC: 517.987.  MSC: 37A50, 37B10.

\emph{Key words and phrases:\/} Markov measure, Mealy automaton, Moore 
diagram, regular rooted tree endomorphism, activity growth, pushforward 
measure, asymptotic frequency.
\end{abstract}

\section{Introduction}

Finite  \ii{Mealy-type  automata} (and closely related to them 
\ii{Moore-type automata}) play an important  role in computer science.
Such  automata  also  play   remarkable  role  in  algebra,  dynamical  
systems, theory of random  walks, spectral theory of graphs, operator  
algebras,  holomorphic dynamics  and  other  areas  of  mathematics 
(see for instance \cite{auto} and references therein).

The main feature of an initial deterministic automaton $\cA_q$    
with a finite input alphabet $X$ and output  alphabet $Y$   is   that   it  
transforms  finite  words 
(strings)  over $X$  into  words    of  the  same  length over  $Y$, 
and  this transformation can  be extended  to a map $\hat{\cA_q}:X^\nn\to 
Y^\nn$ defined  on  the  space  $X^{\mathbb N}$ of  infinite 
words.  If  the  input  and  output  alphabets  coincide  then one  
can  iterate  the  map  $\hat{\mathcal A_q}$   which  leads   to   the  
dynamics on the  space  $X^*$  of  all  finite  words  over  $X$ as well as 
on  the  space $X^\nn$  of  infinite  words.  Also,  we  can  compose  
different maps  of  this  kind,  which  leads  to the automaton semigroups  
or  groups (in the  invertible  case).  The  space $X^\nn$  is endowed with 
the natural  product  topology  which makes  it  homeomorphic  to  a  
Cantor  set.    The  map $\hat{\mathcal A_q}$ is  continuous with  respect  
to  this  topology and  the  pair $(\hat{\mathcal A_q},X^\nn)$ 
is   a  topological  dynamical  system.  A  famous  example  of  this  
sort   is  given  by \ii{the  odometer} (see Figure \ref{fig:odom}).

\begin{figure}[t]
\centering
\includegraphics[width=0.5\textwidth]{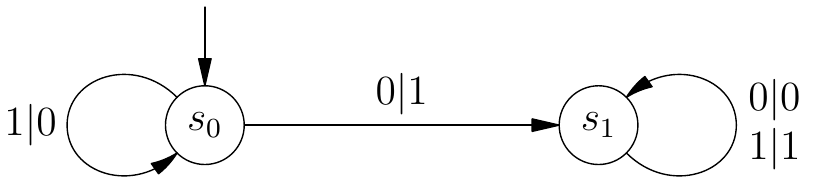}
\caption{The odometer, also known as the adding machine}
\label{fig:odom}
\end{figure}

The  map $\hat{\mathcal A_q}:X^\nn\to X^\nn$  can  
be  viewed  as transducer  that  transforms  the  input  sequence  of  
symbols into  output  sequence  and  it  can  be  considered  as  a coding  
map.  One  may  be  interested in  what  happens  statistically  with  the  
sequence  after  such  transformation.  For  instance,  what  happens with  
the  frequency  of  occurrence  of  a  fixed  symbol    from the  
alphabet?  Or what  happens  with  the  probability  distribution  on  the  
space  of  sequences   if the  input  sequence  is  random?

Ryabinin \cite{Ryab} (see also \cite{AleshKudr}, where more details  are  
given) raised the above  questions  in  the  case  
of  the  binary  alphabet  when different input symbols have identical and 
independent distribution.  In terms of ergodic theory, this  
means  that the  input  sequence  is generic with respect to a Bernoulli 
measure $\mu$.  It was  observed  that  the  output  sequence  is  in  
general  distributed  according  to a  different  law,  just  because  of  
the  change  of  the  frequencies  of  symbols.  Moreover, given the 
frequency $p$ of symbol $1$ in a $\mu$-generic sequence, a formula  was
suggested   for the   frequency $f(p)$ of  $1$ in a sequence generic with 
respect  to  the  image  measure  $(\hat{\mathcal A_q})_{\ast}\mu$.
The function $f$ was  suggested  to  be   called     a  \ii{stochastic  
function}.  No   justification of the formula  for $f(p)$ was  given.  A 
formula with a heuristic argument was presented in the book 
\cite{AleshKudr}.

The shift map $\sigma$ acts on sequences by deleting the  first symbol  and 
hence statistical properties of a $\mu$-generic sequence are impacted by 
ergodic 
properties of $\sigma$ relative to $\mu$.  While the  Bernoulli  measures 
are shift-invariant and ergodic, their images under $\hat{\mathcal A_q}$ 
are usually not.  The  nature  of  the  images  was thoroughly studied by 
Kravchenko \cite{Krav} in  the  case  of  the  alphabet  of  arbitrary 
cardinality and   a  formula  
for  the  frequencies of symbols in  the  output  sequences  was  found  
and  justified.  Moreover,  he showed  that  in  the  case  when  
automaton  $\mathcal A_q$  has  polynomially  bounded activity (as defined 
by S.~Sidki \cite{Sidki}),  the  image  measure  is  absolutely  
continuous  with  respect  to  $\mu$,  while   in  the case  of  a
\ii{strongly connected} automaton  it is typically singular with respect to 
$\mu$.  Singularity was proved by comparing the frequencies at which 
various symbols occur in a $\mu$-generic input sequence and the 
corresponding output sequence.

This paper  generalizes  and extends the  results  of  Kravchenko to the 
case when the measure $\mu$ belongs to a more general class of \emph{Markov 
measures}.  First we show  that  the  image $(\hat{\AA_q})_*\mu$ is  
absolutely continuous with respect to $\mu$ if the automaton $\AA_q$ is of 
low activity (Theorem \ref{polycase}).  This result is extended to 
non-invertible automata as well as to transformations generated by automata 
with infinitely many states.  We do have a restriction though.  Indeed, the 
Markov measures can have forbidden words, when certain transitions in the 
corresponding Markov chain have zero probability.  The automaton must not 
transform any allowed word into a forbidden one.  Our next result is a 
formula that expresses the frequencies  of  symbols in a sequence generic 
with respect  to  the image measure (Theorem \ref{mainthm}), which is then 
generalized to calculate frequencies of arbitrary words (Theorem 
\ref{mainthm1}).  In these two theorems, the automaton is supposed to be 
strictly connected.  The proof   uses the  idea of  converting the  Markov  
chain on the alphabet $X$ of the  automaton $\cA_q$ (given  by a 
stochastic  matrix $L$ that defines $\mu$) into  a  Markov  chain  on the  
product $S\times X$ (where  $S$  is a  set  of states  of $\cA_q$).  The  
latter is determined  by a stochastic matrix $T$ constructed as a  kind  of 
a  skew  product.  This 
allows to represent $(\hat{\AA_q})_*\mu$ as composed of pieces of a measure 
$Q$ that  is the  image  under  a  $1$-block factor map  of  a  Markov  
measure   on   $(S \times X)^\nn$.  Special  attention  is devoted to the 
case  when the  stationary probability vector $\bt$  of the matrix $T$ 
decomposes as a  tensor  product  of  the stationary probability vector 
$\bl$ of $L$ with another probability vector $\bk$.  This condition holds, 
e.g., when $\mu$ is a Bernoulli measure or the automaton is reversible.  For
our last result, under assumptions that $\bt=\bk\otimes\bl$ and the 
automaton is invertible and strongly connected, we prove that the Markov 
measure $\mu$ and its image $(\hat{\AA_q})_*\mu$ are either singular or the 
same (particular cases lead to Theorems \ref{sing-mainB} and 
\ref{sing-mainM}).

The Bernoulli and Markov measures belong to the class of finite-state (or 
self-similar) measures, which was considered by the authors in 
\cite{autolog}.  This looks like a natural class for future generalizations.

Now  let  us  discuss possible applications of the results obtained in this 
paper.  First  there  is very  interesting  group  theory  related to  
the finite automata  of  Mealy  type.  Namely, given a non-initial 
invertible automaton $\AA$, we can set any state $q$ as initial.  Hence the 
automaton generates several invertible transformations 
$\hat{\AA}_q:X^\nn\to X^\nn$, which in turn generate a transformation group 
$\GG(\AA)$.  Groups of this kind are called automaton (or self-similar) 
groups \cite{self-sim-groups}  and  they 
play an important  role  in  group  theory as they  were  used  to  solve a 
number  of  famous  problems  and  find  applications  in many  areas  of  
mathematics  \cite{self-sim-groups1}.  The  results  of  the present paper  
allow  for a deeper study of such  groups and  their  relation  to  the  
dynamics  and  information  theory.

Secondly, if the  automaton  $\AA$  is of polynomial activity,  then the 
Markov measures are quasi-invariant with respect to the group $\GG(\AA)$ 
and therefore can be  used  to  build a  Koopman  type  unitary  
representations in the  Hilbert  space $L^2(X^\nn,\mu)$.  Study  
of  such  representations was initiated  by A. Dudko  and  the  first  
author  in \cite{DudkoGri},  where  it  was  shown  that there  are  many 
pairwise disjoint  representations of  this  type,  they  are  irreducible  
and possess a number of interesting and  useful  properties.

\section{Preliminaries}

\subsection{The shift and the Markov measures}

Let $X$ be a finite set consisting of more than one element.  We refer to 
$X$ as the \emph{alphabet}.  Elements of $X$ are referred to as letters, 
symbols or characters.  Let $X^*$ denote the set of all finite strings 
$x_1x_2\ldots x_n$ of letters from $X$ (including the empty one 
$\varnothing$).  We refer to them as \emph{words} over $X$ and write 
without any delimiters.  Elements of $X$ are identified with one-letter 
words in $X^*$.  Let $X^\nn$ denote the set of all infinite 
sequences (or infinite words) $\om=\om_1\om_2\om_3 \ldots$ over $X$.  Given 
$u,w\in X^*$ and $\om\in X^\nn$, we can naturally define the concatenations 
$uw\in X^*$ and $u\om\in X^\nn$.  Then $u$ is called a \emph{prefix} of the 
word $uw$ and the sequence $u\om$.

The set $X^\nn$ is endowed with the product topology.  The topology is 
generated by the \emph{cylinders}, which are sets of the form 
$uX^\nn=\{u\om\mid \om\in X^\nn\}$, where $u\in X^*$.  The \textbf{shift} 
over the alphabet $X$ is a transformation $\sigma:X^\nn\to X^\nn$ given by 
$(\si(\om))_n=\om_{n+1}$ for all $\om\in X^\nn$ and $n\in\nn$.  The shift 
is continuous and non-invertible.  To simplify notation, we use the same 
symbol $\si$ even when dealing simultaneously with shifts over different 
alphabets.

Suppose $\mu$ is a Borel probability measure on $X^\nn$.  By Kolmogorov's  
theorem, $\mu$ is uniquely determined by its values on the cylinders.  
Conversely, for any function $f:X^*\to[0,\infty)$ satisfying 
$f(\varnothing)=1$ and $\sum_{x\in X}f(wx)=f(w)$ for all $w\in X^*$, there 
is a (unique) Borel probability measure $\nu$ on $X^\nn$ such that 
$\nu(wX^\nn)=f(w)$ for all $w\in X^*$.  The measure $\mu$ is 
\emph{shift-invariant} if $\mu(\si^{-1}(E))=E$ for any Borel set $E\subset 
X^\nn$.  A necessary and sufficient condition for this is that $\sum_{x\in 
X}\mu(xwX^\nn)=\mu(wX^\nn)$ for all $w\in X^*$.  The shift-invariant 
measure $\mu$ is \emph{ergodic} if any Borel set $E\subset X^\nn$ 
satisfying $\sigma^{-1}(E)=E$ has measure $0$ or $1$.

Let $Y$ be another alphabet and $g:X^\nn\to Y^\nn$ be a Borel measurable 
map.  Given a Borel probability measure $\mu$ on $X^\nn$, the 
\textbf{pushforward} of $\mu$ by $g$, denoted $g_*\mu$, is a Borel 
probability measure on $Y^\nn$ given by $g_*\mu(E)=\mu(g^{-1}(E))$ for all 
Borel sets $E\subset Y^\nn$.  If the map $g$ intertwines the shifts on 
$X^\nn$ and $Y^\nn$, that is, $g\sigma=\sigma g$, then the pushforward 
measure $g_*\mu$ is shift-invariant whenever $\mu$ is shift-invariant.  In 
the case $g$ is continuous, it satisfies $g\sigma=\sigma g$ if and only if 
there exist an integer $k\ge1$ and a function $\phi:X^k\to Y$ such that 
$(g(\om))_n=\phi(\om_n,\om_{n+1},\ldots,\om_{n+k-1})$ for all $\om\in 
X^\nn$ and $n\in\nn$.  Such a map is called a \textbf{$k$-block factor} 
map.  Note that the shift itself is a $2$-block factor map.

Any function $p:X\to \rr$ can be interpreted as a vector $p=(p_x)_{x\in X}$ 
which coordinates are indexed by symbols in $X$.  We use both $p(x)$ and 
$p_x$ as notation for the coordinates.  If the set $X$ is naturally 
ordered, we can write $p$ as a usual row vector.  The vector $p$ is a 
\emph{probability vector} if $p_x\ge0$ for all $x$ and $\sum_xp_x=1$.  The 
probability vector defines a \textbf{Bernoulli measure} $\mu$ on $X^\nn$ by 
$\mu(x_1x_2\ldots x_nX^\nn)=p_{x_1}p_{x_2}\dots p_{x_n}$ for any 
$x_1,x_2,\ldots,x_n\in X$.  Any Bernoulli measure is shift-invariant and 
ergodic.

Any function $L:X\times X\to \rr$ can be interpreted as a matrix 
$L=(L_{xy})_{x,y\in X}$ which rows and columns are indexed by symbols in 
$X$.  We use both $L(x,y)$ and $L_{xy}$ as notation for the entries.  If 
the set $X$ is naturally ordered, we can write $L$ as a usual matrix.  The 
matrix $L$ is \emph{stochastic} if all entries are nonnegative and 
$\sum_yL_{xy}=1$ for all $y$.  The stochastic matrix defines a Markov chain 
on $X$ such that $L_{xy}$ is the probability of transition from $x$ to 
$y$.  The stochastic matrix $L$ is called \emph{irreducible} if the Markov 
chain is irreducible, which means that for any $x,y\in X$ we can find 
$x_1=x,x_2,\ldots,x_n=y$ such that $L_{x_ix_{i+1}}>0$ for $1\le i\le n-1$.
Given a stochastic matrix $L=(L_{xy})_{x,y\in X}$ and a probability vector 
$\bl=(\bl_x)_{x\in X}$, we define a \textbf{Markov measure} $\mu$ on 
$X^\nn$ by
\[
\mu(x_1x_2x_3\ldots x_nX^\nn) = \bl_{x_1}L_{x_1x_2}L_{x_2x_3}\dots 
L_{x_{n-1}x_n}
\]
for any $x_1,x_2,x_3,\ldots,x_n\in X$.  The Bernoulli measures are a 
particular case of the Markov measures, when each row of the matrix $L$ 
coincides with $\bl$.  The Markov measure $\mu$ is shift-invariant if and 
only if $\bl$ is a \emph{stationary probability vector} of the matrix $L$, 
which means that $\bl L=\bl$, i.e., $\sum_x \bl_xL_{xy}=\bl_y$ for all 
$y$.  If, additionally, $L$ is irreducible then the Markov measure is 
ergodic.  Moreover, the stationary probability vector of an irreducible 
stochastic matrix is unique and positive.  For more details on Markov 
measures, see, e.g., \cite{Bill}.  In what follows we consider Markov 
measures that are shift-invariant but not necessarily ergodic.

Given a sequence $\om\in X^\nn$ and a letter $x\in X$, let $N(n)$ be the 
number of times $x$ occurs among the first $n$ terms of $\om$.  The limit 
of $N(n)/n$ as $n\to\infty$, if it exists, yields the \textbf{asymptotic 
frequency} at which $x$ occurs in the sequence $\om$.  We denote this limit 
by $\freq_\om(x)$.  If the limit does not exist then $\freq_\om(x)$ is not 
defined.  Let $\chi_{xX^\nn}$ be the characteristic function of the 
cylinder $xX^\nn$.  It is easy to see that
\[
\freq_\om(x)=\lim_{n\to\infty}\frac1n \sum_{i=0}^{n-1} 
\chi_{xX^\nn}(\sigma^i(\om)).
\]
Similarly, for any nonempty word $u\in X^*$ the limit
\[
\freq_\om(u)=\lim_{n\to\infty}\frac1n \sum_{i=0}^{n-1} 
\chi_{uX^\nn}(\sigma^i(\om)),
\]
if it exists, yields the asymptotic frequency at which $u$ occurs as a 
subword in $\om$ (compared with other words of the same length in $X^*$).

Suppose $\mu$ is a Borel probability measure on $X^\nn$.  If $\mu$ is 
shift-invariant then it follows from the Birkhoff ergodic theorem that 
$\freq_\om(u)$ is defined for $\mu$-almost all $\om\in X^\nn$.  If, 
additionally, $\mu$ is ergodic then $\freq_\om(u)=\mu(uX^\nn)$ for 
$\mu$-almost all $\om$. 

\subsection{The Mealy-Moore coding}

In this article we consider the Mealy automata, which are the simplest type 
of transducers with input and output (for a detailed exposition, see 
\cite{auto}).  By definition, a \textbf{Mealy automaton} (or simply an 
\textbf{automaton}) is a quadruple $\cA=(X,S,\pi,\la)$ consisting of two 
nonempty finite sets $X$ (the input/output alphabet) and $S$ (the set of 
states), and two functions, the transition function $\pi:S\times X\to S$ 
and the output function $\la:S\times X\to X$.  (One can consider a more 
general construction where the automaton has separate input and output 
alphabets $X$ and $Y$; then $\la$ takes values in $Y$.)  These functions 
are naturally extended to functions on $S\times X^*$ by 
$\pi(s,\varnothing)=s$, $\la(s,\varnothing)=\varnothing$, and recursive 
rules 
\begin{align*}
\pi(s, xw) &= \pi(s,x)\, \pi(\pi(s,x), w),\\
\lambda(s, xw) &= \lambda(s,x)\, \lambda(\pi(s,x), w),
\end{align*}
where $x\in X$ and $w\in X^*$.  The same recursive rules allow to extend 
$\pi$ and $\la$ to functions on $S\times X^\nn$, but this time $w\in X^\nn$.

Selecting a state $g\in S$ as initial makes $\cA$ into an \emph{initial 
automaton}.  The initial automaton generates transformations of $X^*$ and 
of $X^\nn$, both given by $w\mapsto \la(g,w)$ and referred to as the action 
of the state $g$ or, more generally, as an \textbf{automaton 
transformation}.  The state $g$ is \textbf{nontrivial} if the action is 
nontrivial.  Note that the action on $X^*$ uniquely determines the action 
on $X^\nn$, and vice versa.  By overloading notation, we use $g$ to denote 
either transformation.

All automaton transformations of $X^\nn$ are continuous.  An automaton with 
one state generates a $1$-block factor map.  No block factor map that is 
not a $1$-block factor map can be generated by an automaton.

Any automaton $\cA=(X,S,\pi,\la)$ can be pictured using its \textbf{Moore 
diagram}, which is a directed graph with labeled edges.  The vertices are 
the states and the edges correspond to transition routes (loops and 
multiple edges are possible).  Every edge carries a label consisting of two 
fields.  The top (or left) field is the input letter that invokes that 
particular transition.  The bottom (or right) field is the output letter 
produced during that.  Hence every edge is of the form $s \arr{x}{y} s'$ or 
$s \stackrel{x|y}{\to} s'$, where $\pi(s,x)=s'$ and $\la(s,x)=y$.  Multiple 
edges can be pictured as a single edge with multiple labels.  The 
action of a state $g$ on $X^*$ can be described using paths in the Moore 
diagram.  Namely, given an input word $x_1x_2\ldots x_n\in X^*$, we need to 
find a path of the form
\[
g \arr{x_1}{y_1} s_1 \arr{x_2}{y_2} \ldots \arr{x_n}{y_n} s_n.
\]
Such a path exists and is unique.  Then $g(x_1x_2\ldots x_n)= 
\la(g,x_1x_2\ldots x_n)=y_1y_2\ldots y_n$ and $\pi(g,x_1x_2\ldots x_n)=s_n$.
Likewise, the action of $g$ on $X^\nn$ can be described using infinite 
paths.

The automaton $\cA$ is called \textbf{strongly connected} if its Moore 
diagram is a strongly connected graph, which means that there is a path 
from any state to any other state.

The automaton $\cA=(X,S,\pi,\la)$ is called \textbf{invertible} if each 
state acts on $X$ by a permutation, that is, the function 
$\la(s,\cdot):X\to X$ is invertible for any $s\in S$.  Assuming this, let 
$\la'(s,x)$ be a unique letter such that $\la(s,\la'(s,x))=x$.  Also, let 
$\pi'(s,x)=\pi(s,\la'(s,x))$.  Then $\cA'=(X,S,\pi',\la')$ is called the 
\textbf{inverse automaton} of $\cA$.  In terms of the Moore diagrams, the 
automaton $\cA'$ is obtained from $\cA$ by interchanging the two fields of 
each label.  The action of any state $s\in S$ on $X^*$ (or on $X^\nn$) 
generated by $\cA'$ is the inverse of the action of the same state 
generated by $\cA$.  It follows that the automaton is invertible if and 
only if the action of each state on $X^*$ (or on $X^\nn$) is invertible.  
In the case the automaton is strongly connected, it is enough to know that 
the action of one state is invertible.

\subsection{Endomorphisms of a regular rooted tree}

Given an alphabet $X$, let $\cT(X)$ be a graph with the vertex set $X^*$ in 
which two vertices are connected by an edge if and only if one of them is 
obtained by adding one letter at the end of the other.  Then $\cT(X)$ is an 
\emph{$m$-regular rooted tree}, where $m=|X|$, the number of letters in 
$X$.  The root is the empty word.  All words of a fixed length $k$ form 
the \emph{$k$-th level} of the tree as they are at distance $k$ from the 
root.  An invertible map $g:X^*\to X^*$ is an \textbf{automorphism} of the 
tree $\cT(X)$ if it maps adjacent vertices to adjacent vertices.  Any 
automorphism fixes the root and hence preserves each level of $\cT(X)$.  An 
arbitrary map $g:X^*\to X^*$ is called an \textbf{endomorphism} of the tree 
$\cT(X)$ if it maps adjacent vertices to adjacent vertices and also 
preserves each level.  An equivalent condition is that $g$ preserves the 
length of any word and does not decrease the length of the longest common 
prefix of any two words.  In particular, any automaton transformation of 
$X^*$ is a tree endomorphism.

The set $X^\nn$ of infinite sequences is naturally identified with the 
\emph{boundary} of the rooted tree $\cT(X)$, which consists of infinite 
paths without backtracking that start at the root.  Consequently, any tree 
endomorphism $h:X^*\to X^*$ induces a unique transformation $\tilde h: 
X^\nn\to X^\nn$ such that $h(u)$ is a prefix of $\tilde h(\om)$ whenever 
$u\in X^*$ is a prefix of $\om\in X^\nn$.  If $h$ is an automaton 
transformation, then $\tilde h$ is generated by the same initial 
automaton.  Note that $\tilde h$ does not decrease the length of the 
longest common prefix of any two sequences.  Moreover, any transformation 
of $X^\nn$ with the latter property is induced by a unique tree 
endomorphism.  In view of this, we refer to $\tilde h$ itself as a tree 
endomorphism and also as the action of $h$ on $X^\nn$.

Given a tree endomorphism $g:X^*\to X^*$, for any word $u\in X^*$ there 
exists a unique map $g|_u:X^*\to X^*$ such that $g(uw)=g(u)\,g|_u(w)$ for 
all $w\in X^*$.  The map $g|_u$, which is also a tree endomorphism, is 
called the \textbf{restriction} (or \textbf{section}) of $g$ by the word 
$u$.  The restriction $g|_u$ describes how $g$ acts inside a subtree of 
$\cT(X)$ with the vertex set $uX^*=\{uw\mid w\in X^*\}$, which is 
canonically isomorphic to the entire tree.  Likewise, we can define 
restrictions for a tree endomorphism $g:X^\nn\to X^\nn$ (but this time 
$w\in X^\nn$).  If a tree endomorphism $g$ is generated by an automaton 
$\cA=(X,S,\pi,\la)$ with initial state $g$, then any restriction $g|_u$ is 
the action of another state of the same automaton, namely, $\pi(g,u)$.  In 
the case the automaton is strongly connected, all states are restrictions 
of one another.

A tree endomorphism is called \bb{finite-state} if it has only finitely 
many distinct restrictions.  Given a finite-state tree endomorphism 
$g:X^*\to X^*$, we associate to it the \emph{automaton of restrictions} 
$\cA=(X,S,\pi,\la)$, where $S=\{g|_w: w \in X^*\}$, $\pi(s,x)=s|_x$ and 
$\lambda(s, x)=s(x)$ for all $s\in S$ and $x\in X$.  Then $g$ is generated 
by $\cA$ with initial state $g$.  An arbitrary endomorphism of $\cT(X)$ 
could be similarly generated by the automaton of restrictions if we allowed 
automata with infinitely many states (see \cite{auto}), which we do not.

\section{Tree endomorphisms of low activity}\label{poly}

Suppose a transformation $g:X^*\to X^*$ is an endomorphism of the regular 
rooted tree $\cT(X)$.  For any integer $n\ge0$ let $R_g(n)$ denote the 
number of words $w\in X^*$ of length $n$ such that the restriction $g|_w$ 
is nontrivial (i.e., not the identity map).  The function $R_g$ describes 
the \bb{activity growth} of $g$ as the length of input increases.  It is 
not uncommon that only few (if any) restrictions of $g$ are trivial, in 
which case $R_g(n)$ grows exponentially in $n$, i.e., $R_g(n)\ge c^n$ for 
some $c>1$ and all sufficiently large $n$.  For example, if $g$ is 
generated by an automaton with no trivial state, then all restrictions are 
nontrivial so that $R_g(n)=|X|^n$ for any $n$.  However in this section we 
are looking for transformations with much slower activity growth. 

We say that the endomorphism $g$ is of \bb{polynomial activity growth} (or 
simply of \bb{polynomial activity}) if the function $R_g(n)$ grows at most 
polynomially in $n$, that is, $R_g(n)\le cn^\alpha$ for some $c,\alpha>0$ 
and all $n$.  Similarly, we can consider endomorphisms $g$ of \bb{bounded 
activity}, when the function $R_g$ is bounded (they form a smaller class), 
and of \bb{subexponential activity growth}, when $R_g(n)\le c^n$ for any 
fixed $c>1$ and all sufficiently large $n$ (those form a larger class).  
Note that $(g|_u)|_w=g_{uw}$ for all $u,w\in X^*$.  Therefore 
$R_{g|_u}(n)\le R_g(n+k)$, where $k$ is the length of $u$.  It follows that 
all three classes are closed under taking restrictions.

If a restriction $g|_u$ is nontrivial, then so is the restriction of $g$ by 
any prefix of $u$.  As a consequence, $R_g(n+1)\le |X|R_g(n)$ for all $n$.  
Conversely, if a function $f:\nn\cup\{0\}\to\nn\cup\{0\}$ satisfies 
$f(0)\le1$ and $f(n+1)\le |X|f(n)$ for all $n\ge0$, then $f$ is the 
activity growth function of some tree endomorphism.  Hence various tree 
endomorphisms exhibit a huge variety of activity growths including 
intermediate between polynomial and exponential.  As there are only 
countably many finite-state tree endomorphisms, their activity growth 
cannot be so diverse.  In fact, any finite-state endomorphism has either 
polynomial or exponential activity growth.  There is an elegant criterion, 
due to Sidki \cite{Sidki} who introduced the notion of activity growth, 
that allows to distinguish between these two possibilities.

\begin{proposition}\label{notwocycle}
All tree endomorphisms generated by an automaton $\cA$ have polynomial 
activity growth if and only if the Moore diagram of $\cA$ does not admit 
two distinct simple cycles through any nontrivial state.
\end{proposition}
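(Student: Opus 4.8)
The plan is to read the activity function through the structure of the Moore diagram of $\cA$ as a directed (multi)graph. Throughout, write $N\subseteq S$ for the set of nontrivial states. A word $w$ of length $n$ determines a unique length-$n$ path from $g$ ending at $\pi(g,w)$, and by definition it contributes to $R_g(n)$ precisely when $\pi(g,w)\in N$; thus $R_g(n)$ counts the length-$n$ paths starting at $g$ and ending in $N$. The one observation driving both directions is that $S\setminus N$ is forward invariant: if $\pi(s,x)\in N$ for some $x$, then $s\in N$, since a restriction of the identity map is again the identity, i.e. $s|_x=\pi(s,x)$. Consequently a path cannot leave $N$ and later return, so every path counted by $R_g(n)$ lies entirely in the induced subgraph $D_N$ on the nontrivial states; in particular no strongly connected component of the diagram mixes trivial and nontrivial states, and if $g$ is itself trivial then $R_g(n)=0$.

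For the implication that the cycle condition forces polynomial activity, I would first prove the structural lemma that, under the hypothesis, every strongly connected component $U$ of $D_N$ is either a single vertex or a single simple cycle. Indeed, if some vertex $v\in U$ had out-degree at least two within $U$, then, using strong connectivity to return to $v$ along two paths beginning with distinct edges, one exhibits two distinct simple cycles through the nontrivial state $v$, contradicting the hypothesis; hence every vertex of $U$ has out-degree at most one in $U$, and a strongly connected graph all of whose out-degrees equal one is a single simple cycle. Granting this, order the components of $D_N$ topologically and write the adjacency matrix of $D_N$ as $P+E$, where $P$ is block diagonal with each diagonal block a cyclic permutation matrix (for a cycle component) or zero (for a singleton), and $E$ is strictly block upper triangular. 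Since the condensation is acyclic of depth at most $|S|$, any product of more than $|S|$ factors of $E$ vanishes, so expanding $(P+E)^n$ leaves only terms of the form $P^{a_0}EP^{a_1}E\cdots EP^{a_k}$ with $k\le|S|$ and $\sum_i a_i=n-k$. Each such term has entries bounded independently of $n$, because the blocks of every $P^{a_i}$ are permutation matrices, and the number of admissible exponent tuples is $O(n^{|S|})$; summing, the number of length-$n$ paths from $g$ in $D_N$, hence $R_g(n)$, is $O(n^{|S|})$, which is polynomial.

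For the converse I would argue by contraposition: assuming a nontrivial state $s$ lies on two distinct simple cycles $C_1\ne C_2$, I exhibit an initial state of exponential activity. Set $H=C_1\cup C_2$. Then $H$ is strongly connected, since every vertex of $H$ lies on a cycle through $s$ and so both reaches and is reached from $s$, and $H$ is not a single simple cycle, so some vertex of $H$ has out-degree at least two. Hence the adjacency matrix $M$ of $H$ is irreducible with all row sums at least one and at least one row sum at least two, so by Perron--Frobenius its spectral radius $\rho$ exceeds one. Every closed walk of length $n$ at $s$ inside $H$ is a word $w$ with $\pi(s,w)=s\in N$, whence $R_s(n)\ge (M^n)_{ss}$, and this grows like $\rho^{n}$ along the arithmetic progression of lengths supported at $s$, beating every polynomial; thus the initial automaton with initial state $s$ has exponential activity, and not all tree endomorphisms generated by $\cA$ are of polynomial activity. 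I expect the main obstacle to lie in the first implication, specifically in making the structural reduction fully rigorous: handling multiple edges of the Moore diagram when extracting the two distinct simple cycles through $v$, and controlling both the entries and the count of the surviving terms in the expansion of $(P+E)^n$ so as to obtain the clean polynomial bound.
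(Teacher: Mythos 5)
The paper does not prove this proposition at all --- it is stated as a known criterion and attributed to Sidki \cite{Sidki} --- so there is no in-text argument to compare yours against. Your proof is correct and self-contained, and the overall strategy (reduce to counting length-$n$ walks in the subdiagram $D_N$ on nontrivial states, using that triviality of states is forward invariant; show the cycle condition forces each strongly connected component of $D_N$ to be a lone vertex or a single simple cycle, whence a $P+E$ decomposition with $E$ nilpotent across the condensation gives the $O(n^{|S|})$ bound; and in the converse direction get spectral radius $>1$ for the union of two distinct simple cycles through a nontrivial state, so $R_s(n)\ge (M^n)_{ss}$ grows exponentially along a progression) is essentially the circuit-structure analysis underlying Sidki's original argument, recast in linear-algebraic form. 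The two points you flag as needing care are indeed the only ones requiring attention, and both are routine: (i) extracting two distinct simple cycles from a vertex of out-degree $\ge 2$ in a strongly connected component is done by taking, for each of the two out-edges, a shortest closed walk at $v$ beginning with that edge (any repetition of an internal vertex, or an internal return to $v$, could be excised to give a shorter such walk, so the shortest one is simple, and the two cycles differ in their first edge); (ii) the entries of each surviving term $P^{a_0}EP^{a_1}\cdots EP^{a_k}$ are bounded by a constant depending only on $|S|$ and $|X|$ because each $P^{a_i}$ is a $0$--$1$ matrix and there are at most $|S|$ factors of $E$, while the number of such terms is $\sum_{k<|S|}\binom{n}{k}=O(n^{|S|-1})$. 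One small remark: in the converse direction you do not even need the full Perron--Frobenius asymptotics; supermultiplicativity $(M^{a+b})_{ss}\ge (M^a)_{ss}(M^b)_{ss}$ together with the existence of two distinct closed walks at $s$ of lengths $\ell_1,\ell_2$ already yields at least $\binom{k}{j}$ closed walks of length $j\ell_1+(k-j)\ell_2$, which is enough to beat every polynomial.
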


Given a tree endomorphism $g:X^*\to X^*$, let us associate to it two sets 
of finite words.  The set $V(g)$ consists of all $w\in X^*$ such that the 
restriction $g|_u$ is trivial whenever $g(u)=w$.  This includes a 
possibility that no such words $u$ exist.  If $g$ is invertible, then $w\in 
V(g)$ if and only if $g^{-1}|_w$ is trivial.  The set $V_{\max}(g)$ is a 
subset of $V(g)$.  A word $w\in V(g)$ belongs to $V_{\max}(g)$ if no word 
in $V(g)$ is a proper prefix of $w$.

In the case $g$ is invertible, it is an automorphism of the regular rooted 
tree $\cT(X)$, and so is the inverse $g^{-1}$.  In this case, any word 
$w\in V(g)$ corresponds to a subtree $wX^*$ such that the action of 
$g^{-1}$ inside $wX^*$ is trivial.  Words in $V_{\max}(g)$ correspond to 
maximal subtrees of that kind.

Now we turn to the action of $g$ on $X^\nn$.  By definition of the set 
$V_{\max}(g)$, the cylinders $wX^\nn$, $w\in V_{\max}(g)$ are disjoint 
subsets of $X^\nn$.  Let us consider the complement
\[
\Om_g=X^\nn\setminus\bigcup_{w\in V_{\max}(g)}wX^\nn.
\]
The size of the set $\Om_g$ depends on the activity growth of $g$. 

\begin{lemma}\label{polysupport}
Suppose $g:X^\nn\to X^\nn$ is a finite-state tree endomorphism of 
polynomial activity.  Then the sets $\Om_g$ and $g^{-1}(\Om_g)$ are at most 
countable.
\end{lemma}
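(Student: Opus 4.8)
The plan is to reduce the assertion about the image set $\Om_g$ to an assertion about the input sequences that drive the automaton of restrictions of $g$ through nontrivial states forever, and then to bound that set using the cyclic structure forced by polynomial activity.

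First I would reformulate membership in $\Om_g$ in terms of the set of active words $A=\{u\in X^*: g|_u\neq\id\}$, which is prefix-closed because the restriction of $g$ by a prefix of $u$ is nontrivial whenever $g|_u$ is. Straight from the definition of $V(g)$ one has $X^*\setminus V(g)=\{g(u):u\in A\}=g(A)$. Moreover, if some prefix of $\om\in X^\nn$ lay in $V(g)$, the shortest such prefix would lie in $V_{\max}(g)$; hence $\om\in\Om_g$ forces every prefix of $\om$ to lie in $g(A)$. Next I would introduce $\Theta_g=\{\eta\in X^\nn: g|_{\eta_1\cdots\eta_n}\neq\id\text{ for all }n\}$, the inputs whose trajectory through the Moore diagram stays among nontrivial states, and prove $\Om_g\subseteq g(\Theta_g)$. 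Given $\om\in\Om_g$, the set $A_\om=\{u\in A: g(u)\text{ is a prefix of }\om\}$ is a prefix-closed subtree of $X^*$ (since $g$ preserves prefixes) containing a word of every length, by the previous observation. By König's lemma it has an infinite branch $\eta$; then $\eta\in\Theta_g$ and $g(\eta)=\om$. As $g(\Theta_g)$ is a countable image of $\Theta_g$, it remains only to prove $\Theta_g$ countable.

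This is the crux and the main obstacle. The naive estimate $|\{u\in A:|u|=n\}|=R_g(n)\le cn^\alpha$ is by itself useless for counting branches: a suitably stretched binary tree has polynomially growing level sizes yet uncountably many infinite branches. Instead I would use the finiteness of the state set together with Proposition~\ref{notwocycle}. A trajectory $g=s_0,s_1,s_2,\dots$ through nontrivial states visits some set $K$ of states infinitely often; eventually it remains in $K$, which is strongly connected and consists of nontrivial states. Since no nontrivial state lies on two distinct simple cycles, a finite strongly connected digraph on nontrivial states must be a single simple cycle, around which the trajectory runs periodically. Hence both the state sequence and the corresponding input letters are eventually periodic, so $\Theta_g$ is contained in the countable set of eventually periodic sequences over $X$; in particular $\Om_g$ is at most countable.

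Finally, for $g^{-1}(\Om_g)$ I would split the preimages. If $\eta$ satisfies $g(\eta)\in\Om_g$ and every prefix of $\eta$ is active, then $\eta\in\Theta_g$. Otherwise there is a least $m$ with $g|_{\eta_1\cdots\eta_m}=\id$; writing $v=\eta_1\cdots\eta_m$ we get $g(\eta)=g(v)\,\sigma^m(\eta)$, which forces $\sigma^m(\eta)=\sigma^m(\om)$ where $\om=g(\eta)\in\Om_g$, so that $\eta=v\,\sigma^m(\om)$ is determined by the triple $(m,v,\om)$. There are only countably many such triples, since $m\in\nn$, $v$ ranges over the finite set $X^m$, and $\om$ ranges over the countable set $\Om_g$. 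Thus $g^{-1}(\Om_g)$ is covered by $\Theta_g$ together with a countable family, and is therefore at most countable.
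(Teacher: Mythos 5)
Your proposal is correct and follows essentially the same route as the paper's proof: your $\Theta_g$ is the paper's $\Om'_g$, the K\"onig's-lemma extraction of an infinite active branch mapping onto $\om$ matches the paper's inductive construction of $\om'$, the eventual periodicity of active trajectories via Proposition~\ref{notwocycle} is the same mechanism (you phrase it through the recurrent state set being a single simple cycle, the paper through return words to one recurrent state being powers of a fixed word), and your splitting of preimages by the first trivial restriction reproduces the paper's treatment of $g^{-1}(\Om_g)$. Your explicit remark that the polynomial level bound alone cannot control the number of branches is a useful clarification but does not change the argument.
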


\begin{proof}
Let $\cA=(X,S,\pi,\la)$ be the automaton of restrictions of $g$.  Then 
every state of $\cA$ is of polynomial activity.  By Proposition 
\ref{notwocycle}, the Moore diagram of $\cA$ does not admit two distinct 
simple cycles through any nontrivial state.

Let $\Om'_g$ denote the set of all sequences $\om\in X^\nn$ such that the 
restriction of $g$ by any prefix of $\om$ is nontrivial.  Given 
$\om=x_1x_2x_3\ldots\in \Om'_g$, consider a sequence of states 
$s_0,s_1,s_2,\dots$ that the automaton $\cA$ with initial state $g$ goes 
through while processing the input $\om$.  We have $s_0=g$ and 
$s_n=\pi(s_{n-1},x_n)$ for $n\ge1$.  Each $s_n$ is nontrivial since 
$\om\in\Om'_g$.  As there are only finitely many states, some $s\in S$ is 
visited infinitely often.  If $s_k=s_n=s$ for some $k$ and $n$, $k<n$, then 
$s|_{x_{k+1}x_{k+2}\ldots x_n}=s$.  Let $u$ be the shortest nonempty word 
in $X^*$ such that $s|_u=s$.  Since the Moore diagram of $\cA$ does not 
admit two distinct simple cycles through any nontrivial state, it follows 
that $s|_w=s$ if and only if the word $w$ is obtained by repeating $u$ 
several times.  We conclude that some tail of the sequence $\om$ coincides 
with the periodic sequence $uuu\ldots$ so that $\om$ is eventually 
periodic.  As there are only countably many eventually periodic sequences 
in $X^\nn$, the set $\Om'_g$ is at most countable.

Next we show that $\Om_g\subset g(\Om'_g)$, which will imply that $\Om_g$ 
is also at most countable.  Indeed, let $\om=x_1x_2x_3\ldots$ be in 
$\Om_g$.  Then no prefix $x_1x_2\ldots x_n$ of $\om$ belongs to $V(g)$.  
Hence there is a word $u^{(n)}$ of length $n$ such that $g(u^{(n)})= 
x_1x_2\ldots x_n$ and the restriction $g|_{u^{(n)}}$ is nontrivial.  Since 
$X$ is a finite set, we can build inductively a sequence $\om'\in X^\nn$ 
such that any prefix of $\om'$ is also a prefix for infinitely many words 
$u^{(n)}$.  If a word $w$ occurs as a prefix for another word $u$, then 
$g(w)$ is a prefix for $g(u)$ and $g|_w$ is nontrivial whenever $g|_u$ is 
nontrivial.  It follows that $g(\om')=\om$ and $\om'\in\Om'_g$ so that 
$\om\in g(\Om'_g)$.

Let $\om\in\Om_g$ and suppose $\om'$ is a pre-image of $\om$ under the 
transformation $g$.  If $\om'$ is not in $\Om'_g$ then there is a prefix 
$u$ of $\om'$ such that the restriction $g|_u$ is trivial.  This implies 
that $g$ does not change the tail of $\om'$ following the prefix $u$.  
Hence $\om$ can be obtained from $\om'$ by changing some letters in the 
prefix $u$.  We conclude that any element of $g^{-1}(\Om_g)\setminus\Om'_g$ 
coincides with some element of $\Om_g$ up to finitely many terms.  Note 
that for any $\om\in X^\nn$ there are only countably many sequences in 
$X^\nn$ that coincide with $\om$ up to finitely many terms.  Since the sets 
$\Om_g$ and $\Om'_g$ are at most countable, it follows that $g^{-1}(\Om_g)$ 
is at most countable as well.
\end{proof}

A Borel measure on $X^\nn$ is called \textbf{non-atomic} if every 
one-element set has measure zero.  Under the assumptions of Lemma 
\ref{polysupport}, we have $\mu(\Om_g)=\mu(g^{-1}(\Om_g))=0$ for any 
non-atomic measure $\mu$.

\begin{lemma}\label{subexpsupport}
If $g:X^\nn\to X^\nn$ is a tree endomorphism of subexponential activity 
growth, then $\mu(\Om_g)=\mu(g^{-1}(\Om_g))=0$ for any non-atomic Markov 
measure $\mu$ on $X^\nn$.
\end{lemma}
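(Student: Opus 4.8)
\noindent\emph{Proof strategy.} The plan is to replace the cardinality argument of Lemma~\ref{polysupport} by a single covering estimate, applied three times. Retain the set $\Om'_g$ of sequences all of whose prefix-restrictions are nontrivial, and call a set $E\subseteq X^\nn$ \emph{thin} if there is a subexponential function $\Phi$ such that, for every $\ell$, $E$ is contained in a union of at most $\Phi(\ell)$ cylinders of length $\ell$. First I would record that the relevant sets are thin. If $\om\in\Om_g$ then, by the definition of $V_{\max}(g)$, no prefix of $\om$ lies in $V(g)$; since the length-$\ell$ words outside $V(g)$ are exactly the $g$-images of the $R_g(\ell)$ words with nontrivial restriction, $\Om_g$ is covered by at most $R_g(\ell)$ cylinders of length $\ell$, hence is thin. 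By prefix-closedness of nontrivial restrictions (noted before Proposition~\ref{notwocycle}), the length-$\ell$ prefixes of points of $\Om'_g$ are precisely the $R_g(\ell)$ words $w$ with $g|_w$ nontrivial, so $\Om'_g$ is thin as well. Finally, for fixed $N$ the set $\si^N(\Om_g)$ is covered by the $\si^N$-images of the covering cylinders of $\Om_g$, i.e.\ by at most $R_g(\ell+N)$ cylinders of length $\ell$, and $\ell\mapsto R_g(\ell+N)$ is still subexponential; so $\si^N(\Om_g)$ is thin.

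\noindent\emph{The key estimate.} The heart of the matter is to turn non-atomicity into uniform geometric decay of positive-measure cylinders. Classify each transition of a word $w=w_1\cdots w_\ell$ by the value $L_{w_iw_{i+1}}$ as deterministic ($=1$), branching ($\in(0,1)$), or forbidden ($=0$), let $b(w)$ count the branching transitions, and set $\rho=\max\{L_{xy}:0<L_{xy}<1\}<1$. (If no such entry exists then every row of $L$ is a unit vector, every sequence is eventually periodic along a forced cycle and thus an atom of measure $\bl_{\om_1}$, contradicting non-atomicity; so this case is vacuous and $\rho$ is well defined.) Call a letter $x$ \emph{recurrently deterministic} if $x$ and all its forced successors are deterministic and the forced orbit returns to $x$. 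I claim no positive-measure word meets such a letter: if $w_j$ were one, then extending $w_1\cdots w_j$ along its forced cycle yields a single eventually periodic sequence of measure $\bl_{w_1}\prod_{i<j}L_{w_iw_{i+1}}=\mu(w_1\cdots w_jX^\nn)>0$, an atom. Since a deterministic run through more than $|X|$ letters must repeat a letter and hence contains a recurrently deterministic one, every positive-measure word has no deterministic run longer than $|X|-1$ transitions; grouping its $\ell-1$ transitions around the branching ones then gives $b(w)\ge \ell/|X|-1$.

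\noindent\emph{Conclusion.} Combining the two, every positive-measure cylinder of length $\ell$ has $\mu(wX^\nn)\le\rho^{b(w)}\le\rho^{\ell/|X|-1}$. Hence for a thin set $E$ with cover bound $\Phi$, choosing any $c\in(1,\rho^{-1/|X|})$ and using $\Phi(\ell)\le c^\ell$ for large $\ell$ gives $\mu(E)\le \Phi(\ell)\,\rho^{\ell/|X|-1}\le \rho^{-1}\,(c\,\rho^{1/|X|})^\ell\to0$, so $\mu(E)=0$. Applying this to $\Om_g$ and to $\Om'_g$ yields $\mu(\Om_g)=\mu(\Om'_g)=0$. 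For the preimage, recall from the proof of Lemma~\ref{polysupport} that every point of $g^{-1}(\Om_g)\setminus\Om'_g$ shares a tail with some point of $\Om_g$, so $g^{-1}(\Om_g)\subseteq\Om'_g\cup\bigcup_{N\ge0}\si^{-N}(\si^N(\Om_g))$; each $\si^N(\Om_g)$, being thin, lies in a $\mu$-null Borel set, whose $\si^{-N}$-preimage is null by shift-invariance, and a countable union of null sets is null. Therefore $\mu(g^{-1}(\Om_g))=0$.

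\noindent\emph{Main obstacle.} I expect the second step to be the crux. In the polynomial case $\Om'_g$ is literally countable, so non-atomicity alone finishes the job; here $\Om'_g$ and $\Om_g$ may be uncountable, and one must extract from non-atomicity the quantitative statement that every positive-measure word of length $\ell$ branches at least $\ell/|X|-1$ times. This is precisely what prevents the Markov measure from concentrating along the few nearly deterministic words that a subexponentially active $g$ is allowed to single out; pinning down the right notion (recurrently deterministic letters) and the linear branching lower bound is the delicate point.
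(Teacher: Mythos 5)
Your proof is correct and follows essentially the same route as the paper's: the same decomposition into $\Om'_g$, $\Om_g$, and $g^{-1}(\Om_g)\setminus\Om'_g$, the same subexponential counting of covering cylinders, and the same pigeonhole-plus-atom use of non-atomicity to show that positive-measure cylinders of length $\ell$ decay geometrically in $\ell$. The only (equally valid) variation is that you dispose of $g^{-1}(\Om_g)\setminus\Om'_g$ via shift-invariance of $\mu$ applied to the sets $\si^N(\Om_g)$, whereas the paper instead enlarges the covering family by a factor $|X|^k$ to account for altering the first $k$ letters.
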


\begin{proof}
Let $\mu$ be an arbitrary non-atomic Markov measure on $X^\nn$.  It is 
defined by a stochastic matrix $L$ with stationary probability vector 
$\bl$.  First we need to estimate measures of cylinders.  Let $\alpha_0$ be 
the largest entry of $L$ different from $1$.  Let $m$ be the number of 
letters in $X$.  We claim that $\mu(wX^\nn)\le c\alpha^n$ for any word 
$w\in X^*$ of length $n\ge1$, where $c=\alpha_0^{-1-1/m}$ and  
$\alpha=\alpha_0^{1/m}$ (note that $\alpha<1$).  Assume the contrary:  
$\mu(wX^\nn)>c\alpha^n$ for some $w=x_1x_2\ldots x_n$, where each $x_i\in 
X$.  The measure is given by $\mu(wX^\nn)=\bl_{x_1}L_{x_1x_2}\dots 
L_{x_{n-1}x_n}$, where no factor in the product exceeds $1$.  Since 
$c\alpha^n=\alpha_0^{(n-1)/m-1}$, the sequence $L_{x_1x_2},L_{x_2x_3}, 
\dots, L_{x_{n-1}x_n}$ contains no more than $(n-1)/m-1$ numbers different 
from $1$.  Also, $n-1>m$ since $\alpha_0^{(n-1)/m-1}<\mu(wX^\nn)\le1$.  It 
follows that the sequence admits $m$ consecutive $1$s.  That is,
$L_{x_ix_{i+1}}=1$ for $k\le i\le k+m-1$, where $1\le k\le n-m$.  Then some 
letter $x\in X$ occurs more than once in the word $x_kx_{k+1}\ldots 
x_{k+m}$, that is, $x_j=x_{j'}=x$ for some $j$ and $j'$, $k\le j<j'\le 
k+m$.  Let us take the first $j$ letters of $w$ and append to them the word 
$x_{j+1}x_{j+2}\ldots x_{j'}$ repeated infinitely many times.  We obtain an 
infinite sequence $\om=y_1y_2y_3\ldots$ in $X^\nn$.  By construction, 
$y_i=x_i$ for $1\le i\le j$ and $L_{y_iy_{i+1}}=1$ for $i\ge j$.  As a 
consequence, a cylinder $y_1y_2\ldots y_iX^\nn$ has the same measure 
$M=\bl_{x_1}L_{x_1x_2}\dots L_{x_{j-1}x_j}$ for all $i\ge j$.  This measure 
is not zero since $\mu(wX^\nn)$ is not zero.  The cylinders $y_1y_2\ldots 
y_iX^\nn$ are nested and their intersection is $\{\om\}$.  It follows that
$\mu(\{\om\})=M\ne 0$, which contradicts with $\mu$ being a non-atomic 
measure.

For any $n\ge1$ let $W'_n$ denote the set of all words $w\in X^*$ of length 
$n$ such that the restriction $g|_w$ is nontrivial.  Further, let 
$W_n=g(W'_n)$.  For any $k$, $0\le k\le n$, let $W_{n,k}$ be the set of all 
words of length $n$ that coincide with a word in $W_n$ up to changing some 
of the first $k$ letters.  The cardinality of the set $W'_n$ is 
$|W'_n|=R_g(n)$.  Then $|W_n|\le |W'_n|=R_g(n)$ and $|W_{n,k}|\le 
|X|^k|W_n|\le m^kR_g(n)$.

Let $\Xi'_n$ be the union of cylinders $wX^\nn$ over all $w\in W'_n$, let 
$\Xi_n$ be the union of $wX^\nn$ over all $w\in W_n$, and let $\Xi_{n,k}$ 
be the union of $wX^\nn$ over all $w\in W_{n,k}$.  Since $\mu(wX^\nn)\le 
c\alpha^n$ for any word $w$ of length $n$, it follows that $\mu(\Xi'_n)\le 
c\alpha^nR_g(n)$, $\mu(\Xi_n)\le c\alpha^nR_g(n)$ and $\mu(\Xi_{n,k})\le 
c\alpha^nm^kR_g(n)$.  By assumption, $R_g(n)$ grows subexponentially in 
$n$.  Since $\alpha<1$, we conclude that $\mu(\Xi'_n)$, $\mu(\Xi_n)$ and 
$\mu(\Xi_{n,k})$ all tend to $0$ as $n\to\infty$.

Just like in the proof of Lemma \ref{polysupport}, consider the set 
$\Om'_g$ of all sequences $\om\in X^\nn$ such that the restriction of $g$ 
by any prefix of $\om$ is nontrivial.  Clearly, $\Om'_g\subset\Xi'_n$ for 
all $n$.  Since $\mu(\Xi'_n)\to0$ as $n\to\infty$, the set $\Om'_g$ has 
measure zero.  Just like in the proof of Lemma \ref{polysupport}, we can 
show that $\Om_g\subset g(\Om'_g)$.  Then $\Om_g\subset\Xi_n$ for all $n$.  
Since $\mu(\Xi_n)\to0$ as $n\to\infty$, the set $\Om_g$ has measure zero.  
Further, we can show that any pre-image under $g$ of any $\om\in\Om_g$ 
either belongs to $\Om'_g$ or coincides with $\om$ up to finitely many 
terms.  Hence any element of $g^{-1}(\Om_g)\setminus\Om'_g$ belongs to 
$\Xi_{n,k}$ for some $k$ (depending on the element) and all $n$.  Since 
$\mu(\Xi_{n,k})\to0$ as $n\to\infty$ for any fixed $k$, it follows that 
$\mu(g^{-1}(\Om_g)\setminus\Om'_g)=0$.  We already know that 
$\mu(\Om'_g)=0$.  Thus $\mu(g^{-1}(\Om_g))=0$.
\end{proof}

Lemmas \ref{polysupport} and \ref{subexpsupport} suggest that a tree 
endomorphism of slow activity growth changes only finitely many terms in a 
generic infinite sequence $\om\in X^\nn$.  This observation leads to the 
following result.

\begin{theorem}\label{polycase}
Let $\mu$ be a non-atomic Markov measure on $X^\nn$ and $g:X^\nn\to X^\nn$ 
be a tree endomorphism of subexponential activity growth.  Then the measure 
$g_*\mu$ is absolutely continuous with respect to $\mu$ if and only if 
$\mu(wxX^\nn)=0$ implies $\mu(g^{-1}(wxX^\nn))=0$ for all $w\in 
V_{\max}(g)$ and $x\in X$.  If this is the case, then the Radon-Nikodym 
derivative is given by
\begin{equation}\label{eq:R-N}
\frac{dg_*\mu}{d\mu} = \sum_{
\substack{w\in V_{\max}(g),\, x \in X:\\ \mu(wxX^\nn)\ne0}}
\frac{\mu(g^{-1}(wxX^\nn))}{\mu(wxX^\nn)}\,\chi_{wxX^\nn}.
\end{equation}
\end{theorem}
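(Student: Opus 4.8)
The plan is to use Lemma \ref{subexpsupport} to throw away the null set $\Om_g$ and reduce the whole statement to a cylinder-by-cylinder comparison on the complement $X^\nn\setminus\Om_g=\bigsqcup_{w\in V_{\max}(g),\,x\in X}wxX^\nn$. First I would note that both measures live off $\Om_g$: the lemma gives $\mu(\Om_g)=0$ directly, and $g_*\mu(\Om_g)=\mu(g^{-1}(\Om_g))=0$ by the same lemma. Since the cylinders $wX^\nn$ ($w\in V_{\max}(g)$) are disjoint, carry the full mass of $\mu$, and are countable in number (being indexed by a subset of $X^*$), it suffices to understand $g_*\mu$ and $\mu$ on Borel subsets of each block $wxX^\nn$.

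The heart of the argument is a combinatorial identity for $g^{-1}$ of a subcylinder $wxvX^\nn$ of such a block. Because $w\in V_{\max}(g)\subseteq V(g)$, the defining property of $V(g)$ says that every length-$|w|$ preimage $u_0$ of $w$ has trivial restriction $g|_{u_0}$; hence $g(u_0 s)=w s$ for every tail $s$. Since $g$ is a tree endomorphism and so maps the length-$|w|$ prefix of $\om$ to that of $g(\om)$, it follows that $g(\om)$ has prefix $wxv$ exactly when the length-$|w|$ prefix $u_0$ of $\om$ satisfies $g(u_0)=w$ and the following letters of $\om$ spell $xv$. This yields the disjoint decomposition $g^{-1}(wxvX^\nn)=\bigsqcup_{u_0:\,g(u_0)=w}u_0xvX^\nn$. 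Taking $\mu$-measures and using the Markov property to factor the common tail weight $L_{xb_1}L_{b_1b_2}\cdots$ (with $v=b_1b_2\ldots$) out of each $\mu(u_0xvX^\nn)=\mu(u_0xX^\nn)\cdot L_{xb_1}L_{b_1b_2}\cdots$, I would obtain, whenever $\mu(wxX^\nn)\ne0$, the proportionality $\mu(g^{-1}(wxvX^\nn))=c_{w,x}\,\mu(wxvX^\nn)$ with $c_{w,x}=\mu(g^{-1}(wxX^\nn))/\mu(wxX^\nn)$, valid for every $v$. In short, $g_*\mu$ agrees with $c_{w,x}\mu$ on all subcylinders of $wxX^\nn$.

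The theorem then assembles quickly. For the forward implication, if the stated condition fails there is a pair $w,x$ with $\mu(wxX^\nn)=0<\mu(g^{-1}(wxX^\nn))=g_*\mu(wxX^\nn)$, a $\mu$-null set of positive $g_*\mu$-measure, so $g_*\mu\not\ll\mu$. Conversely, assuming the condition, let $h$ be the right-hand side of \eqref{eq:R-N}. On a block with $\mu(wxX^\nn)=0$ the condition forces $g_*\mu(wxX^\nn)=0$, so $g_*\mu$ and $h\,\mu$ both vanish there; on a block with $\mu(wxX^\nn)\ne0$, the proportionality above together with the fact that a Borel measure on $X^\nn$ is determined by its cylinder values gives $g_*\mu(E)=c_{w,x}\mu(E)=\int_E h\,d\mu$ for every Borel $E\subseteq wxX^\nn$. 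Summing over the countable family of blocks and adjoining the doubly null set $\Om_g$ yields $g_*\mu(E)=\int_E h\,d\mu$ for all Borel $E$, which is at once absolute continuity and the Radon--Nikodym formula.

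I expect the main obstacle to be the combinatorial step of the second paragraph: correctly deducing from $w\in V_{\max}(g)$ that every preimage $u_0$ of $w$ has trivial restriction, identifying the preimage set of $wxvX^\nn$ precisely, and then verifying that the Markov tail weight pulled out of $\mu(u_0xvX^\nn)$ is genuinely independent of the particular preimage $u_0$, so that a single constant $c_{w,x}$ governs the entire block. Once that uniformity is secured, the equivalence and the density formula are formal consequences.
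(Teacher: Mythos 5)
Your proposal is correct and follows essentially the same route as the paper's proof: discard $\Om_g$ and $g^{-1}(\Om_g)$ via Lemma \ref{subexpsupport}, decompose $g^{-1}(wxvX^\nn)$ as the disjoint union of $u_0xvX^\nn$ over preimages $u_0$ of $w$ (using triviality of the restrictions $g|_{u_0}$), factor out the common Markov tail weight to get the block-wise proportionality constant $\mu(g^{-1}(wxX^\nn))/\mu(wxX^\nn)$, and assemble over the countable family of blocks. The only cosmetic difference is that you phrase the necessity direction via exhibiting a $\mu$-null set of positive $g_*\mu$-measure, where the paper simply notes $g_*\mu(E)=\mu(g^{-1}(E))$.
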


\begin{proof}
If the measure $g_*\mu$ is absolutely continuous with respect to $\mu$, 
then $\mu(E)=0$ implies $\mu(g^{-1}(E))=g_*\mu(E)=0$ for any measurable set 
$E\subset X^\nn$.  Hence the conditions of the theorem are clearly 
necessary.  Now assume they hold.  We need to show that $g_*\mu=D\mu$, where
the function $D:X^\nn\to\rr$ is given by \eqref{eq:R-N}.

Consider any $w\in V_{\max}(g)$ and $x\in X$ such that $\mu(wxX^\nn)\ne0$. 
Let $\delta_{w,x}=\mu(g^{-1}(wxX^\nn))/\mu(wxX^\nn)$.  First we are going 
to show that $g_*\mu(C)=\delta_{w,x}\mu(C)$ for any cylinder $C\subset 
wxX^\nn$.  The cylinder $C$ is of the form $wxw'X^\nn$, where $w'\in X^*$.
Let $U_w$ be the set of all words $u\in X^*$ such that $g(u)=w$.  The 
pre-image $g^{-1}(wX^\nn)$ is the disjoint union of cylinders $uX^\nn$, 
$u\in U_w$.  Since the restriction $g|_u$ is trivial for each $u\in U_w$, 
it follows that $g^{-1}(wxX^\nn)$ is the union of cylinders $uxX^\nn$, 
$u\in U_w$, while $g^{-1}(C)$ is the union of cylinders $uxw'X^\nn$, $u\in 
U_w$.  Let $w=x_1x_2\ldots x_n$ and $w'=x'_1x'_2\ldots x'_k$ ($x_i,x'_j\in 
X$).  The Markov measure $\mu$ is defined by a stochastic matrix $L$ with 
stationary probability vector $\bl$.  We have
\begin{align*}
\mu(wxX^\nn)&=\bl_{x_1}L_{x_1x_2}\dots L_{x_{n-1}x_n}L_{x_nx},\\
\mu(C)&=\bl_{x_1}L_{x_1x_2}\dots L_{x_{n-1}x_n}L_{x_nx}L_{xx'_1}
L_{x'_1x'_2}\dots L_{x'_{k-1}x'_k},
\end{align*}
which implies that $\mu(C)=\mu(wxX^\nn)L_{xx'_1}L_{x'_1x'_2}\dots 
L_{x'_{k-1}x'_k}$.  Similarly,
\[
\mu(uxw'X^\nn)=\mu(uxX^\nn)L_{xx'_1}L_{x'_1x'_2}\dots L_{x'_{k-1}x'_k}
\]
for all words $u\in X^*$.  Summing up the latter equality over $u\in U_w$, 
we obtain
\[
\mu(g^{-1}(C))=\mu(g^{-1}(wxX^\nn))L_{xx'_1}L_{x'_1x'_2}\dots 
L_{x'_{k-1}x'_k}.
\]
It follows that
\[
g_*\mu(C)=\mu(g^{-1}(C))=\delta_{w,x}\mu(wxX^\nn)L_{xx'_1}L_{x'_1x'_2}\dots 
L_{x'_{k-1}x'_k}=\delta_{w,x}\mu(C).
\]

To prove that $g_*\mu=D\mu$, it is enough to show that the two measures 
agree on all cylinders.  Take any cylinder $C\subset X^\nn$.  The set 
$X^\nn$ is the disjoint union of $\Om_g$ and all cylinders of the 
form $wxX^\nn$, where $w\in V_{\max}(g)$ and $x\in X$.  By definition, the 
function $D$ takes a constant value on each $wxX^\nn$, which is $\de_{w,x}$ 
if $\mu(wxX^\nn)\ne0$ and $0$ otherwise.  Also, $D$ is zero on $\Om_g$.  It 
follows that
\[
\int_C D(\om)\,d\mu(\om)=\sum_{
\substack{w\in V_{\max}(g),\, x \in X:\\ \mu(wxX^\nn)\ne0}}
\delta_{w,x}\mu(C\cap wxX^\nn).
\]
Since $C$ is a cylinder, the intersection $C\cap wxX^\nn$ is either a 
cylinder or the empty set.  By the above, $\delta_{w,x}\mu(C\cap wxX^\nn)= 
g_*\mu(C\cap wxX^\nn)$.  Further, if $\mu(wxX^\nn)=0$ for some $w\in 
V_{\max}(g)$ and $x\in X$, then $g_*\mu(wxX^\nn)=0$ by assumption.  As a 
consequence, $g_*\mu(C\cap wxX^\nn)=0$.  Finally, $g_*\mu(\Om_g)=0$ due to 
Lemma \ref{subexpsupport}.  Hence $g_*\mu(C\cap\Om_g)=0$.  We conclude that
\[
\int_C D(\om)\,d\mu(\om)=g_*\mu(C\cap\Om_g)+
\sum_{w\in V_{\max}(g),\,x\in X} g_*\mu(C\cap wxX^\nn) =g_*\mu(C),
\]
which completes the proof.
\end{proof}

The set $V_{\max}(g)$ is rarely finite.  Therefore the conditions of 
Theorem \ref{polycase} might not be easy to verify, especially if $g$ is 
not invertible.  We can replace them with simpler but somewhat stronger 
conditions.

\begin{corollary}
Let $\mu$ be a Markov measure on $X^\nn$ defined by a stochastic matrix $L$ 
with stationary vector $\bl$, and $g:X^\nn\to X^\nn$ be a tree endomorphism 
of subexponential activity growth.  If all coordinates of $\bl$ and all 
entries of $L$ are positive, then the measure $g_*\mu$ is absolutely 
continuous with respect to $\mu$, with the Radon-Nikodym derivative given by
\[
\frac{dg_*\mu}{d\mu} = \sum_{w\in V_{\max}(g),\, x \in X}
\frac{\mu(g^{-1}(wxX^\nn))}{\mu(wxX^\nn)}\,\chi_{wxX^\nn}.
\]
\end{corollary}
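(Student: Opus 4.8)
The plan is to deduce this directly from Theorem \ref{polycase}, so that the only work lies in checking the two hypotheses of that theorem—that $\mu$ is non-atomic and that the implication ``$\mu(wxX^\nn)=0\Rightarrow\mu(g^{-1}(wxX^\nn))=0$'' holds—both of which become transparent once positivity is exploited. First I would record that positivity of all entries of $L$ and all coordinates of $\bl$ forces every cylinder to have positive measure: since $\mu(x_1x_2\ldots x_nX^\nn)=\bl_{x_1}L_{x_1x_2}\cdots L_{x_{n-1}x_n}$ is a product of strictly positive numbers, we get $\mu(wX^\nn)>0$ for every $w\in X^*$, and in particular $\mu(wxX^\nn)>0$ for all $w\in V_{\max}(g)$ and $x\in X$.

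Next I would verify non-atomicity, which is the one place where a genuine (if short) estimate is needed. Because $|X|>1$, each row of $L$ has at least two positive entries summing to $1$, so every entry is strictly less than $1$; set $\alpha_0=\max_{x,y}L_{xy}$, so $\alpha_0<1$. Then for any $\om\in X^\nn$ we have $\mu(\om_1\om_2\ldots\om_nX^\nn)\le\bl_{\om_1}\alpha_0^{\,n-1}\le\alpha_0^{\,n-1}\to0$, and since the cylinders $\om_1\ldots\om_nX^\nn$ are nested with intersection $\{\om\}$, it follows that $\mu(\{\om\})=0$. Hence $\mu$ is non-atomic.

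With both hypotheses established, Theorem \ref{polycase} applies to the subexponential-activity endomorphism $g$. The implication required there is vacuously satisfied, since by the first step no cylinder of the form $wxX^\nn$ has measure zero; therefore $g_*\mu$ is absolutely continuous with respect to $\mu$ and its Radon-Nikodym derivative is given by \eqref{eq:R-N}. Finally, because $\mu(wxX^\nn)\ne0$ for every $w\in V_{\max}(g)$ and $x\in X$, the side condition under the summation sign in \eqref{eq:R-N} is automatic, so the sum runs over all $w\in V_{\max}(g)$ and all $x\in X$, yielding precisely the stated formula. I expect no real obstacle in this argument; its only substantive ingredient is the non-atomicity estimate above, which is exactly where the hypotheses of positivity of $L$ together with $|X|>1$ are used.
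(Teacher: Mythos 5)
Your proposal is correct and follows essentially the same route as the paper's own proof: positivity of $\bl$ and $L$ gives every cylinder positive measure (making the hypothesis of Theorem \ref{polycase} vacuous and removing the side condition in \eqref{eq:R-N}), while the fact that all entries of $L$ are strictly less than $1$ yields non-atomicity. Your explicit geometric-decay estimate for non-atomicity just fills in a step the paper states without proof.
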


\begin{proof}
Since all coordinates of $\bl$ and all entries of $L$ are positive, every 
cylinder has nonzero measure.  Besides, all entries of $L$ are less than 
$1$, which implies that the measure $\mu$ is non-atomic.  It remains to 
apply Theorem \ref{polycase}.
\end{proof}

\begin{corollary}
Let $\mu$ be a non-atomic Markov measure on $X^\nn$ defined by a stochastic 
matrix $L$ with stationary vector $\bl$, and $g:X^\nn\to X^\nn$ be a tree 
endomorphism of polynomial activity generated by an automaton 
$\cA=(X,S,\pi,\la)$.  Suppose that $\bl_x=0$ whenever $\bl_{\la(g,x)}=0$ 
and $L_{x,y}=0$ whenever $L_{\la(s,x),\,\la(\pi(s,x),y)}=0$ (for all $s\in 
S$ and $x,y\in X$).  Then the measure $g_*\mu$ is absolutely continuous 
with respect to $\mu$, with the Radon-Nikodym derivative given by 
\eqref{eq:R-N}.
\end{corollary}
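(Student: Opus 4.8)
The plan is to deduce the statement directly from Theorem \ref{polycase}. Since polynomial activity growth is a special case of subexponential activity growth, that theorem applies verbatim to $g$, so it suffices to verify its hypothesis: that $\mu(wxX^\nn)=0$ implies $\mu(g^{-1}(wxX^\nn))=0$ for every $w\in V_{\max}(g)$ and every $x\in X$. Once this is checked, absolute continuity of $g_*\mu$ and the formula \eqref{eq:R-N} are immediate consequences of Theorem \ref{polycase}, with no further work.

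The core of the argument is a single ``positivity is preserved'' observation: under the stated hypotheses, $g$ sends every cylinder of positive measure to a cylinder of positive measure. To establish this, I would fix a word $v=v_1v_2\ldots v_m$ with $\mu(vX^\nn)>0$ and set $v'=g(v)=\la(g,v)=v'_1v'_2\ldots v'_m$. Recording the states $s_0=g$ and $s_i=\pi(g,v_1\ldots v_i)$ visited while reading $v$, one has $v'_1=\la(g,v_1)$ and $v'_{i+1}=\la(\pi(s_{i-1},v_i),v_{i+1})$ for $1\le i\le m-1$. Now $\mu(vX^\nn)>0$ is equivalent to $\bl_{v_1}>0$ together with $L_{v_iv_{i+1}}>0$ for all $i$. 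Reading the two hypotheses in contrapositive form, the first gives $\bl_{v'_1}=\bl_{\la(g,v_1)}>0$, and the second, applied with $s=s_{i-1}$, $x=v_i$, $y=v_{i+1}$, gives $L_{v'_iv'_{i+1}}>0$ for each $i$. Hence $\mu(v'X^\nn)=\bl_{v'_1}L_{v'_1v'_2}\dots L_{v'_{m-1}v'_m}>0$, which is the claim.

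To finish, fix $w\in V_{\max}(g)$ and $x\in X$, and let $U_w=\{u\in X^*:g(u)=w\}$, a finite set since $g$ preserves length. Exactly as in the proof of Theorem \ref{polycase}, each restriction $g|_u$ with $u\in U_w$ is trivial, so $g(ux)=wx$ and $g^{-1}(wxX^\nn)$ is the disjoint union of the cylinders $uxX^\nn$, $u\in U_w$; thus $\mu(g^{-1}(wxX^\nn))=\sum_{u\in U_w}\mu(uxX^\nn)$. Applying the positivity observation to $v=ux$ and using $g(ux)=wx$, one sees that $\mu(uxX^\nn)>0$ would force $\mu(wxX^\nn)>0$. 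Consequently, if $\mu(wxX^\nn)=0$ then every summand vanishes and $\mu(g^{-1}(wxX^\nn))=0$, which is precisely the condition required by Theorem \ref{polycase}.

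I expect the only delicate point to be the treatment of the final letter when the positivity observation is applied to $v=ux$: the image word is exactly $wx$, rather than $w$ followed by some other letter, precisely because $g|_u$ is the identity and hence $\la(s_n,x)=x$ where $s_n=\pi(g,u)$. Matching the intermediate states $s_{i-1}$ to the ``for all $s\in S$'' quantifier in the second hypothesis is where all the bookkeeping lives; the verification that $\bl$ and each factor $L$ stay positive is then routine.
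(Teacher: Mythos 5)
Your proposal is correct and is essentially the paper's own argument: the paper likewise reduces to verifying the hypothesis of Theorem \ref{polycase} and does so by tracking the states $s_i=\pi(g,y_1\ldots y_{i-1})$ along the input word and applying the two hypotheses letter by letter to each factor $\bl_{x_1}$, $L_{x_ix_{i+1}}$. The only (immaterial) difference is direction and scope: the paper proves the contrapositive form, that $\mu(wX^\nn)=0$ forces $\mu(uX^\nn)=0$ for every $u$ with $g(u)=w$ and every $w\in X^*$, whereas you push positivity forward and restrict to the words $ux$ with $u\in U_w$, $w\in V_{\max}(g)$; both correctly yield the condition required by Theorem \ref{polycase}.
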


\begin{proof}
In view of Theorem \ref{polycase}, we only need to show that 
$\mu(wxX^\nn)=0$ implies $\mu(g^{-1}(wxX^\nn))=0$ for all $w\in 
V_{\max}(g)$ and $x\in X$.  We are going to show more, namely, 
$\mu(wX^\nn)=0$ implies $\mu(g^{-1}(wX^\nn))=0$ for all $w\in X^*$.  Since 
the pre-image $g^{-1}(wX^\nn)$ is the union of cylinders $uX^\nn$ over all 
words $u$ such that $g(u)=w$, it is enough to show that $\mu(wX^\nn)=0$ and 
$g(u)=w$ implies $\mu(uX^\nn)=0$.

Suppose $w=x_1x_2\ldots x_n$ and $u=y_1y_2\ldots y_n$ are words of length 
$n\ge1$ such that $g(u)=w$.  We have $\mu(wX^\nn)=\bl_{x_1}L_{x_1x_2}\dots 
L_{x_{n-1}x_n}$ and $\mu(uX^\nn)=\bl_{y_1}L_{y_1y_2}\dots L_{y_{n-1}y_n}$.  
Let $s_1=g$ and $s_i=\pi(g,y_1y_2\ldots y_{i-1})$ for $2\le i\le n$.  Then 
$\la(s_i,y_i)=x_i$ for $1\le i\le n$ and $\pi(s_i,y_i)=s_{i+1}$ for $1\le 
i\le n-1$.  It follows that $\bl_{x_1}=0$ implies $\bl_{y_1}=0$ and 
$L_{x_ix_{i+1}}=0$ implies $L_{y_iy_{i+1}}=0$ for any $i$, $1\le i\le n-1$.
Thus $\mu(wX^\nn)=0$ implies $\mu(uX^\nn)=0$.
\end{proof}

\section{Strongly connected automata}\label{strong}

Suppose $\mu$ is a shift-invariant, ergodic Markov measure on $X^\nn$ 
defined by an irreducible stochastic matrix $L$ with stationary vector 
$\bl$.  Let $g:X^\nn\to X^\nn$ be an automaton transformation and $\om$ be 
a $\mu$-generic sequence in $X^\nn$.  To learn about statistical properties 
of the sequence $g(\om)$, we should study the pushforward measure $g_*\mu$.
Unfortunately, the measure $g_*\mu$ need not be shift-invariant, let alone 
ergodic.  On the cylinders, it is given by
\[
g_*\mu(y_1y_2\ldots y_n X^\nn) = \sum_{g \arr{x_1}{y_1} s_1 \arr{x_2}{y_2} 
\ldots \arr{x_n}{y_n} s_n} \bl(x_1)L(x_1,x_2)\ldots L(x_{n-1},x_n),
\]
where the sum is over all paths of the form $g \arr{x_1}{y_1} s_1 
\arr{x_2}{y_2} \ldots \arr{x_n}{y_n} s_n$ in the Moore diagram of the 
automaton $\cA=(X,S,\pi,\la)$ generating $g$.

One way to address this difficulty is to keep track of the states $\cA$ 
goes through along with the output.  We are going to define maps to obtain 
a commutative diagram
\[
\begin{diagram}
& & (S\times X)^\nn & & \\
& \ruInto^{\pig} & & \rdOnto^{\tilde\lambda} & \\
X^\nn & & \rTo^{g} & & g(X^\nn)\subset X^\nn \\
\end{diagram}
\]
so that $g_*\mu = \lags\pigs\mu$.  Then we introduce a shift-invariant 
measure $Q$ on $X^\nn$ that is the pushforward of a Markov measure under 
the 1-block factor map $\lag$.  Under some assumptions on the automaton 
$\cA$ and the matrix $L$, the measure $Q$ is ergodic while the measure 
$g_*\mu$ is absolutely continuous with respect to $Q$.

Let us begin with defining a map ${\tilde\pi : S \times X^\nn \to (S\times 
X)^\nn}$ recursively by
\[
 \tilde\pi(s, x\om) = (s, x)\, \tilde\pi(\pi(s, x),\om)
\]
for all $s\in S$, $x\in X$ and $\om\in X^\nn$.  Then for any state $s\in S$ 
we define a map $\tilde\pi_s:X^\nn\to(S\times X)^\nn$ by 
$\tilde\pi_s(\om)=\tilde\pi(s,\om)$, $\om\in X^\nn$.

Recall that $g$ is one of the states of the automaton $\cA$.  Given a 
sequence $\tilde\pi_g(\om)=\tilde\pi(g,\om)\in(S\times X)^\nn$, we can 
extract the output $g(\om)=\lambda(g,\om)$ simply by looking at the 
states.  Hence we define a map $\tilde\lambda:(S\times X)^\nn\to X^\nn$ 
recursively by
\[
 \tilde\lambda \bigl((s, x)\tilde\om\bigr) = \lambda(s, x) 
 \tilde\lambda(\tilde{\om})
\]
for all $s\in S$, $x\in X$ and $\tilde{\om}\in (S\times X)^\nn$.  Note that 
$\tilde{\lambda}$ is a $1$-block factor map.  Now for every infinite path
\[
 g\arr{x_1}{y_1} s_1\arr{x_2}{y_2} \ldots \arr{x_n}{y_n} 
 s_n\arr{x_{n+1}}{y_{n+1}} \ldots
\]
in the Moore diagram of the automaton $\cA$ we have
\begin{align*}
 \tilde\pi_{g} (x_1x_2\ldots x_n\ldots) &= (g, x_1)(s_1, x_2) 
 \ldots(s_{n-1},x_n)\ldots,\\
 \tilde\lambda\bigl( (s_0, x_1)(s_1, x_2)\ldots(s_{n-1},x_n)\ldots\bigr) &= 
 y_1y_2\ldots y_n\ldots
\end{align*}
In particular, $\tilde\lambda(\tilde\pi_g(\om))=g(\om)$ for all $\om\in 
X^\nn$.  Hence we do have the commutative diagram.

The measure $\pigs\mu$ on $(S\times X)^\nn$ is easier to treat than the 
measure $g_*\mu$ on $X^\nn$.  On the cylinders, the former is given by
\[
\pigs\mu\bigl((g, x_1)(s_1, x_2)\ldots(s_{n-1}, x_n)(S\times X)^\nn\bigr)=
 \bl_{x_1}L_{x_1x_2}\ldots L_{x_{n-1}{x_n}}
\]
if \,$g \arr{x_1}{y_1} s_1 \arr{x_2}{y_2} \ldots \arr{x_n}{y_n} s_n$\, is a 
valid path for some $y_1,y_2,\dots,y_n\in X$ and $s_n\in S$.  Otherwise the 
cylinder has measure $0$.

In a sense, the measure $\pigs\mu$ is ``piecewise'' Markov, scaled by 
constants on cylinders $(s, x)(S \times X)^\nn$.  To make this statement 
more precise, let us introduce a matrix $T = T_{L,\cA}$ with rows and 
columns indexed by elements of $S \times X$, and entries given by
\begin{equation}\label{eq:T}
T_{(s_0,x_0)(s_1,x_1)} = \left\{\!\begin{array}{cl}
                  L(x_0, x_1) & \mbox{if } \,\pi(s_0, x_0) = s_1,\\
                  0 & \mbox{otherwise}.
                 \end{array}\right.
\end{equation}
This matrix is stochastic.  Indeed,
\[
 \sum_{(r, y)}T_{(s,x)(r,y)} = \sum_{y} L(x,y) = 1
\]
since $T_{(s,x)(r,y)} \neq 0$ for at most one choice of $r$, $r=\pi(s,x)$.

Let $\bt$ be a stationary probability vector of the stochastic matrix $T$.  
Recall that $\bt$ is a row vector which coordinates are indexed by elements 
of $S\times X$.  All coordinates are nonnegative and add up to $1$.  The 
vector $\bt$ satisfies the matrix identity $\bt T=\bt$.  Let $P$ denote the 
Markov measure on $(S\times X)^\nn$ with transition matrix $T$ and initial 
probability distribution $\bt$.  On the cylinders, the measure $P$ is given 
by
\begin{align*}
P\bigl((s_0, x_0)\ldots(s_{n}, x_n) (S\times X)^\nn\bigr) &
 =\bt_{(s_0,x_0)} T_{(s_0, x_0)(s_0,x_1)}\ldots 
 T_{(s_{n-1},x_{n-1})(s_{n},x_n)}\\
 &=\bt_{(s_0,x_0)}L_{x_0x_1}\ldots L_{x_{n-1}{x_n}}
\end{align*}
if \,$s_0 \arr{x_0}{y_0} s_1 \arr{x_1}{y_1}\ldots \arr{x_n}{y_n} s_{n+1}$\, 
is a valid path for some $y_0,y_1,\dots,y_n\in X$ and $s_{n+1}\in S$.  
Otherwise the cylinder has measure $0$. 

The measure $P$ is shift-invariant since $\bt$ is a stationary vector of 
$T$.  If the Markov chain defined by the matrix $T$ is irreducible, then 
the vector $\bt$ is unique and positive, and the measure $P$ is ergodic, 
but we do not make this assumption yet.  We do know that the vector $\bl$ 
is positive.  Hence for every finite word $\tilde w\in(S\times X)^*$,
\[
P \bigl((g, x)\tilde w(S \times X)^\nn\bigr) = \frac{\bt(g,x)}{\bl(x)}
 \pigs\mu\bigl( \tilde w(S \times X)^\nn\bigr).
\]
Therefore for each cylinder $\ogx =(g,x)(S\times X)^\nn$ we obtain
\[
P|_\ogx = \frac{\bt(g,x)}{\bl(x)} \pigs\mu|_\ogx.
\]
Since the image of the map $\tilde\pi_g$ is contained in the union of the 
cylinders $\ogx$, $x\in X$, the measure $\pigs \mu$ is supported on that 
union.  It follows that
\[
\pigs\mu = \sum_{x\in X} \frac{\bl(x)}{\bt(g,x)}P|_\ogx
\]
provided that $\bt(g,x)>0$ for all $x\in X$.  From this observation we 
derive the following lemma.

\begin{lemma}\label{pmllp}
If $\bt(g,x)>0$ for all $x\in X$, then the measure $\pigs\mu$ is absolutely 
continuous with respect to $P$.
\end{lemma}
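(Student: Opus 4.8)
The plan is to read off absolute continuity directly from the decomposition
\[
\pigs\mu = \sum_{x\in X} \frac{\bl(x)}{\bt(g,x)}\,\pgx
\]
established immediately above the lemma. This decomposition is legitimate precisely under our hypothesis: $\bt(g,x)>0$ for all $x\in X$ makes every coefficient $\bl(x)/\bt(g,x)$ finite, and since $L$ is irreducible the stationary vector $\bl$ is positive, so each coefficient is in fact a nonnegative (indeed positive) real number. First I would observe that each summand $\pgx$ is nothing but the restriction of $P$ to the cylinder $\ogx$, that is, $\pgx(E)=P(E\cap\ogx)$ for every Borel set $E\subset(S\times X)^\nn$; in particular $\pgx$ is absolutely continuous with respect to $P$, with density $\chi_{\ogx}$.

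Next I would verify the null-set condition defining absolute continuity. Suppose $E\subset(S\times X)^\nn$ is a Borel set with $P(E)=0$. Then for each $x\in X$ we have $P(E\cap\ogx)\le P(E)=0$, hence $\pgx(E)=0$. Because $\pigs\mu$ is the finite sum of the measures $\pgx$ scaled by the finite nonnegative constants $\bl(x)/\bt(g,x)$, it follows that $\pigs\mu(E)=0$. This is exactly the assertion that $\pigs\mu$ is absolutely continuous with respect to $P$.

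There is essentially no obstacle here, since all the real work was already done in producing the decomposition of $\pigs\mu$ as a combination of cylinder-restrictions of $P$; the hypothesis $\bt(g,x)>0$ is precisely what makes that decomposition valid. The only two points I would state carefully are that restricting $P$ to a cylinder cannot create a new null set, and that a finite nonnegative linear combination of measures each dominated by $P$ is again dominated by $P$. As a byproduct, the argument exhibits the Radon--Nikodym derivative explicitly: because the cylinders $\ogx$, $x\in X$, are pairwise disjoint, the densities $\chi_{\ogx}$ have disjoint supports, and one obtains
\[
\frac{d\,\pigs\mu}{dP}=\sum_{x\in X}\frac{\bl(x)}{\bt(g,x)}\,\chi_{\ogx},
\]
which equals $\bl(x)/\bt(g,x)$ on $\ogx$ and vanishes off the union of these cylinders.
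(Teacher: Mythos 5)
Your proposal is correct and follows exactly the paper's route: the paper derives the lemma as an immediate consequence of the decomposition $\pigs\mu = \sum_{x\in X} \frac{\bl(x)}{\bt(g,x)}P|_{\Omega_{g,x}}$ established just before the statement, which is precisely what you do. The explicit Radon--Nikodym derivative you record at the end is a correct (if unstated in the paper) byproduct of the same observation.
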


Next we introduce the measure $Q=\lags P$.  Note that $g_*\mu= 
\lags\pigs\mu$.  Since $\tilde\la$ is a $1$-block factor map, properties of 
the measures $P$ and $\pigs\mu$ translate into analogous properties of $Q$ 
and $g_*\mu$. 

\begin{lemma}\label{PQ}
The measure $Q$ is shift-invariant.  It is ergodic whenever $P$ is ergodic.
\end{lemma}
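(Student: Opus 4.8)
The plan is to treat the two assertions separately, both flowing from the single fact that $\tilde\lambda$ is a $1$-block factor map and hence intertwines the shifts. This intertwining relation, $\tilde\lambda\sigma=\sigma\tilde\lambda$, is precisely the special case $k=1$ of the characterization of block factor maps recorded in the preliminaries; it is also immediate from the recursive definition of $\tilde\lambda$, since applying $\tilde\lambda$ and reading off the $n$-th output letter depends only on the $n$-th input coordinate $(s_{n-1},x_n)$.

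For shift-invariance I would simply invoke the general principle, already stated in the preliminaries, that the pushforward of a shift-invariant measure by a shift-intertwining map is again shift-invariant. The measure $P$ was shown above to be shift-invariant because $\bt$ is a stationary vector of $T$, and $\tilde\lambda$ intertwines the shifts, so $Q=\lags P$ is shift-invariant. This step is essentially a one-line application.

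For ergodicity the approach is the classical argument that a measure-theoretic factor of an ergodic system is ergodic. Assuming $P$ is ergodic, I would take an arbitrary Borel set $E\subset X^\nn$ with $\sigma^{-1}(E)=E$ and pass to its preimage $\tilde\lambda^{-1}(E)\subset(S\times X)^\nn$. Using the intertwining relation one gets $\sigma^{-1}(\tilde\lambda^{-1}(E))=\tilde\lambda^{-1}(\sigma^{-1}(E))=\tilde\lambda^{-1}(E)$, so $\tilde\lambda^{-1}(E)$ is a shift-invariant Borel set. Ergodicity of $P$ then forces $P(\tilde\lambda^{-1}(E))\in\{0,1\}$, and by the definition of the pushforward $Q(E)=\lags P(E)=P(\tilde\lambda^{-1}(E))\in\{0,1\}$. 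Since $E$ was an arbitrary shift-invariant set, this establishes that $Q$ is ergodic.

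I do not anticipate any genuine obstacle: the whole argument reduces to the relation $\tilde\lambda\sigma=\sigma\tilde\lambda$ together with the elementary identity $Q(E)=P(\tilde\lambda^{-1}(E))$. The only point deserving (routine) care is the verification that the preimage under $\tilde\lambda$ of a shift-invariant set is again shift-invariant, which is exactly where the intertwining relation is used, and which guarantees that $\tilde\lambda^{-1}(E)$ is eligible for the ergodicity hypothesis on $P$.
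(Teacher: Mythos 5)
Your proposal is correct and follows essentially the same route as the paper: both arguments rest on the intertwining relation $\tilde\lambda\sigma=\sigma\tilde\lambda$ coming from $\tilde\lambda$ being a $1$-block factor map, with shift-invariance obtained by pushing the relation $Q(E)=P(\tilde\lambda^{-1}(E))$ through $\sigma^{-1}$, and ergodicity by observing that the preimage of a shift-invariant set is shift-invariant. No gaps.
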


\begin{proof}
Since $\tilde\la$ is a block factor map, it intertwines the shifts on 
$(S\times X)^\nn$ and $X^\nn$ so that we have the following commutative 
diagram:
\[
\begin{diagram}
(S\times X)^\nn & \rTo^{\sigma}&(S\times X)^\nn  \\
\dOnto^{\tilde\lambda}&  & \dOnto^{\tilde\lambda}  \\
X^\nn &  \rTo^{\sigma} & X^\nn \\
\end{diagram}
\]
By construction, the measure $P$ is shift-invariant, that is, 
$P(\si^{-1}(\widetilde E))=P(\widetilde E)$ for any measurable set 
$\widetilde E\subset(S\times X)^\nn$.  Then for any measurable set 
$E\subset X^\nn$,
\[
Q(\sigma^{-1}(E)) = P(\lag^{-1}(\sigma^{-1}(E)))
=P(\sigma^{-1}(\lag^{-1}(E)))= P(\lag^{-1}(E))= Q(E).
\]
Hence $Q$ is shift-invariant as well.

Now assume that $P$ is ergodic, that is, for any measurable set $\widetilde 
E\subset(S\times X)^\nn$ invariant under the shift, $\sigma^{-1}(\widetilde 
E)=\widetilde E$, we have $P(\widetilde E)=0$ or $1$.  Let $E$ be a 
measurable subset of $X^\nn$ invariant under the shift.  Then $\widetilde 
E=\lag^{-1}(E)$ is also invariant under the shift and $Q(E)=P(\widetilde 
E)$.  Hence $Q(E) = 0$ or $1$.  Thus the measure $Q$ is ergodic as well. 
\end{proof}

\begin{lemma}\label{gsmuQ}
If $\bt(g,x)>0$ for all $x\in X$, then the measure $g_*\mu$ is absolutely 
continuous with respect to $Q$.
\end{lemma}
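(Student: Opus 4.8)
The plan is to deduce the lemma from Lemma \ref{pmllp} together with the elementary fact that absolute continuity of measures is preserved under pushforward. Recall the identities $g_*\mu=\lags\pigs\mu$ and $Q=\lags P$ established above; under these, the claim that $g_*\mu$ is absolutely continuous with respect to $Q$ is precisely the assertion that applying $\lags$ to the pair $\pigs\mu,\,P$ preserves the absolute continuity of the former with respect to the latter. This is exactly the sort of translation foreshadowed by the remark that properties of $P$ and $\pigs\mu$ pass to analogous properties of $Q$ and $g_*\mu$.

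First I would invoke Lemma \ref{pmllp}: the hypothesis $\bt(g,x)>0$ for all $x\in X$ gives that $\pigs\mu$ is absolutely continuous with respect to $P$. Next I would record the general principle that if $f\colon Y\to Z$ is measurable and $\nu_1,\nu_2$ are Borel measures on $Y$ with $\nu_1$ absolutely continuous with respect to $\nu_2$, then $f_*\nu_1$ is absolutely continuous with respect to $f_*\nu_2$. The verification is immediate: for a measurable set $E\subset Z$ with $f_*\nu_2(E)=0$ one has $\nu_2(f^{-1}(E))=0$, whence $\nu_1(f^{-1}(E))=0$ by absolute continuity, and therefore $f_*\nu_1(E)=\nu_1(f^{-1}(E))=0$.

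Applying this principle with $f=\tilde\lambda$ (which is a $1$-block factor map, hence Borel measurable), $\nu_1=\pigs\mu$ and $\nu_2=P$ yields that $\lags\pigs\mu$ is absolutely continuous with respect to $\lags P$, i.e.\ that $g_*\mu$ is absolutely continuous with respect to $Q$. There is no real obstacle in this argument: the only content beyond unwinding the definitions of $g_*\mu$ and $Q$ is the one-line observation that pushforward respects absolute continuity, and the measurability of $\tilde\lambda$ needed to apply it is already guaranteed by its being a block factor map. One could even compress the whole proof into a single chain of implications on null sets, combining the null-set hypothesis of Lemma \ref{pmllp} with the preimage computation above.
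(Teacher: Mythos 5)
Your proof is correct and follows essentially the same route as the paper: the paper also invokes Lemma \ref{pmllp} and then runs exactly your null-set chain $Q(E)=0\Rightarrow P(\lag^{-1}(E))=0\Rightarrow \pigs\mu(\lag^{-1}(E))=0\Rightarrow g_*\mu(E)=0$, merely without isolating it as a general principle about pushforwards. No gaps.
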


\begin{proof}
We need to show that $Q(E)=0$ implies $g_*\mu(E)=0$ for any measurable set 
$E\subset X^\nn$.  Let $\widetilde E=\lag^{-1}(E)$.  Then $P(\widetilde E) 
=Q(E)=0$.  By Lemma \ref{pmllp}, the measure $\pigs\mu$ is absolutely 
continuous with respect to $P$.  Hence $\pigs\mu(\widetilde E)=0$.  Since 
$g_*\mu=\lags\pigs\mu$, it follows that $g_*\mu(E)=\pigs\mu(\widetilde 
E)=0$.
\end{proof}

Now let us discuss when the Markov chain defined by the matrix $T$ is 
irreducible.  An obvious necessary condition is that the automaton $\cA$ be 
strongly connected.  If all entries of the matrix $L$ are positive (for 
example, if the measure $\mu$ is Bernoulli), this condition is also 
sufficient.  However it need not be so for a general Markov measure.

\begin{example}
Let $X=\{0, 1, 2\}$, the measure $\mu$ be defined by the matrix
\[
L=\left(
\begin{array}{ccc}
1/2 & 1/2 & 0 \\
0 & 1/2 & 1/2 \\
1/2& 0 & 1/2
\end{array}
\right),
\]
and the automaton $\cA$ have the transition function given by the diagram 
in Figure \ref{ex1}.

\begin{figure}[h!]
\centering
\includegraphics[width=3in]{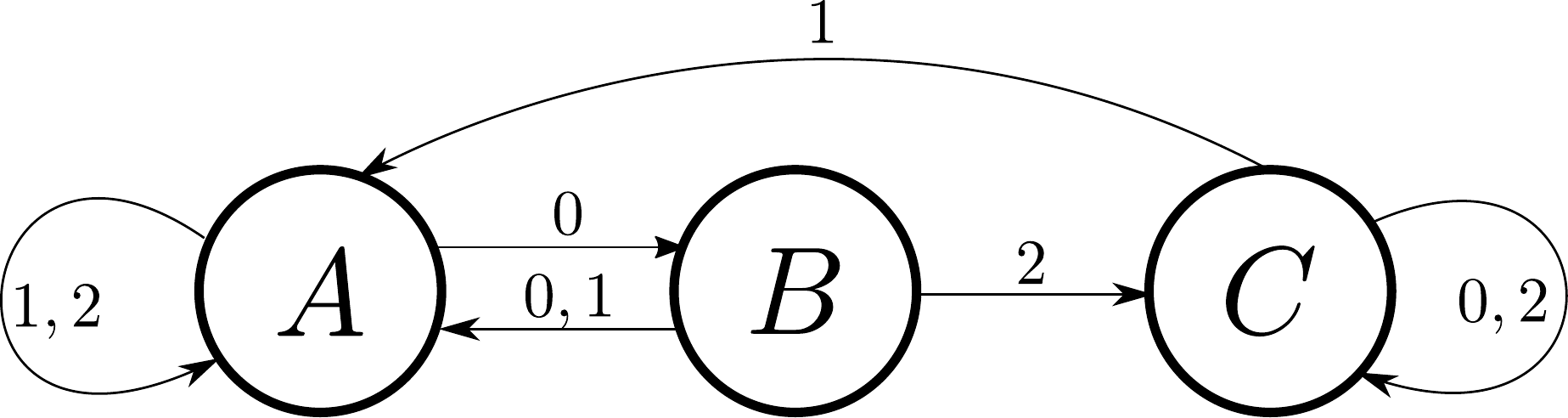}
\caption{Any path from $A$ to $C$ ends in $02$}
\label{ex1}
\end{figure}

Note that $02$ is a forbidden word in the Markov chain defined by $L$, that 
is, $\mu(wX^\nn)=0$ whenever $02$ is a subword of $w$.  On the other hand, 
any input word that takes the automaton from state $A$ to state $C$ must 
end in $02$.  Therefore in the Markov chain defined by $T$ there is zero 
chance to get from $(A,x)$ to $(C,y)$ in any number of steps.  Thus the 
Markov chain is not irreducible. \tri
\end{example}

The above example motivates the following definition.

\begin{definition}
We say that the automaton $\cA$ is \textbf{$L$-strongly connected} if for 
any pair of states $s, r\in S$ and any pair of symbols $x, y \in X$, there 
exists a word $w \in X^*$ such that $\pi(s, xw) =r$ and $xwy$ is not a 
forbidden word in the Markov chain defined by the matrix $L$ (that is, if 
$w=w_1\ldots w_n$ then $L_{xw_1}$, $L_{w_iw_{i+1}}$ for $1\le i\le n-1$, 
and $L_{w_ny}$ are all nonzero).
\end{definition}

\begin{lemma}\label{markovirr}
The Markov chain defined by the matrix $T=T_{L,\cA}$ is irreducible if and 
only if the automaton $\cA$ is $L$-strongly connected.
\end{lemma}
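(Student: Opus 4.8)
The plan is to prove the biconditional in Lemma \ref{markovirr} by unwinding the definition of irreducibility for the matrix $T$ in terms of paths in the Moore diagram, and matching these against the $L$-strong connectivity condition. Recall that $T$ is irreducible means that for any two states $(s,x)$ and $(r,y)$ in $S\times X$, there is a directed path of positive $T$-probability from $(s,x)$ to $(r,y)$. By the definition \eqref{eq:T} of $T$, a single step $(s_0,x_0)\to(s_1,x_1)$ has positive probability exactly when $\pi(s_0,x_0)=s_1$ and $L(x_0,x_1)>0$. Thus a positive-probability path of length $n$ from $(s,x)$ to $(r,y)$ corresponds precisely to a sequence $(s_0,x_0),(s_1,x_1),\ldots,(s_n,x_n)$ with $(s_0,x_0)=(s,x)$, $(s_n,x_n)=(r,y)$, where $s_{i+1}=\pi(s_i,x_i)$ and $L(x_i,x_{i+1})>0$ for all $i$. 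The key observation is that the second-coordinate sequence $x_0x_1\ldots x_n$ is just an input word read starting from state $s$, while the positivity of all $L(x_i,x_{i+1})$ says exactly that $x_0x_1\ldots x_n$ is not forbidden in the $L$-chain, and the first coordinates are forced by $\pi$.

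First I would make this correspondence explicit in both directions. For the forward implication, assume $T$ is irreducible and fix arbitrary $s,r\in S$ and $x,y\in X$. I would produce a positive-probability $T$-path from $(s,x)$ to $(r,y)$; reading off its second coordinates gives an input word $x=x_0x_1\ldots x_n=y$ with $\pi(s,x_0x_1\ldots x_{n-1})=r$ (this follows by iterating $s_{i+1}=\pi(s_i,x_i)$) and with $x_0x_1\ldots x_n$ not forbidden. Setting $w=x_1x_2\ldots x_{n-1}$, this is precisely the word required by the definition of $L$-strong connectivity: $\pi(s,xw)=r$ and $xwy$ is not forbidden. One small edge case to handle is the length of the path, i.e.\ when $n$ is very small (the word $w$ might be empty), but this is routine.

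Conversely, assuming $\cA$ is $L$-strongly connected, I would fix $(s,x)$ and $(r,y)$ and apply the definition to obtain a word $w$ with $\pi(s,xw)=r$ and $xwy$ not forbidden. Writing $xwy=x_0x_1\ldots x_n$ (so $x_0=x$, $x_n=y$) and defining $s_i=\pi(s,x_0x_1\ldots x_{i-1})$, I would check that the sequence $(s_0,x_0),\ldots,(s_n,x_n)$ is a valid positive-probability $T$-path: each step satisfies $\pi(s_i,x_i)=s_{i+1}$ by the recursive definition of the extended transition function $\pi$, and $L(x_i,x_{i+1})>0$ because $xwy$ is not forbidden. Its endpoints are $(s,x)$ and $(\pi(s,xw),y)=(r,y)$, establishing the required connectivity in $T$.

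The main conceptual point—and the place where care is needed—is the exact bookkeeping of indices and the role of the anchoring symbols $x$ and $y$ at the two ends: the $L$-strong connectivity condition is phrased so that the input word \emph{begins} with $x$ and the concatenation $xwy$ (including the terminal symbol $y$) avoids forbidden transitions, which is exactly what is needed to guarantee both that the $T$-path starts at the prescribed state $(s,x)$ and that its final transition into $(r,y)$ carries positive weight $L(x_{n-1},y)>0$. I expect no genuine obstacle here beyond carefully aligning the definition's indexing with the path description; the correspondence between $T$-paths and non-forbidden input words read from a fixed state is essentially a tautology once \eqref{eq:T} is spelled out.
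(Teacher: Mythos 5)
Your proposal is correct and matches the paper's argument, which the authors compress to the single sentence ``This follows directly from the definitions'': the content is exactly the bijection you describe between positive-probability paths for $T$ and non-forbidden input words read from a fixed state. Your explicit index bookkeeping (including the empty-$w$ edge case) is just a written-out version of what the paper leaves to the reader.
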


\begin{proof}
This follows directly from the definitions.
\end{proof}

Finally we can formulate the main results of this section.

\begin{theorem}\label{mainthm}
Let $\mu$ be a Markov measure on $X^\nn$ defined by an irreducible 
stochastic matrix $L$.  Suppose a transformation $g:X^\nn\to X^\nn$ is 
generated by an automaton $\cA$.  If the automaton $\cA$ is $L$-strongly 
connected then for any $x\in X$ and $\mu$-almost all $\om\in X^\nn$,
\[
\lim_{n\to\infty}\frac{1}{n} \sum_{i=0}^{n-1} \chi_{xX^\nn}(\sigma^ig(\om)) 
= \sum_{s_0 \arr{x_0}{x} s_1} \bt(s_0, x_0),
\]
where $\bt$ is the stationary probability vector of the stochastic matrix 
$T=T_{L,\cA}$ defined in \eqref{eq:T} and the sum is over edges in the 
Moore diagram of $\cA$.
\end{theorem}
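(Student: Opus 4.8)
The plan is to reduce the statement about $g(\om)$ to an ergodic-theoretic statement about the auxiliary measure $Q=\lags P$ on $X^\nn$, using the machinery already developed in this section. First I would observe that the quantity on the left, $\frac1n\sum_{i=0}^{n-1}\chi_{xX^\nn}(\sigma^i g(\om))$, is precisely a Birkhoff average for the observable $\chi_{xX^\nn}$ evaluated along the orbit of $g(\om)$ under the shift. The natural strategy is to apply the Birkhoff ergodic theorem, but to the measure $Q$ rather than to $g_*\mu$ directly, since $g_*\mu$ need not be shift-invariant or ergodic, whereas $Q$ is both (by Lemma \ref{PQ}, provided $P$ is ergodic). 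Since $\cA$ is $L$-strongly connected, Lemma \ref{markovirr} gives that $T$ is irreducible, hence $P$ is an ergodic Markov measure and its stationary vector $\bt$ is unique and positive; in particular $\bt(g,x)>0$ for all $x$, which is exactly the hypothesis needed for Lemma \ref{gsmuQ}.

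Next I would apply Birkhoff's theorem to the ergodic measure $Q$: for $Q$-almost every $\eta\in X^\nn$, the Birkhoff average $\frac1n\sum_{i=0}^{n-1}\chi_{xX^\nn}(\sigma^i\eta)$ converges to $Q(xX^\nn)$. To transfer this from $Q$-almost every $\eta$ to $\mu$-almost every $\om$ (with $\eta=g(\om)$), I would invoke Lemma \ref{gsmuQ}: since $\bt(g,x)>0$ for all $x$, the measure $g_*\mu$ is absolutely continuous with respect to $Q$. Therefore the set of $\eta$ where the Birkhoff average fails to converge to $Q(xX^\nn)$, being $Q$-null, is also $g_*\mu$-null; pulling back through $g$, the set of $\om$ where $\frac1n\sum_{i=0}^{n-1}\chi_{xX^\nn}(\sigma^i g(\om))$ fails to converge to $Q(xX^\nn)$ is $\mu$-null. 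This establishes that the limit exists and equals $Q(xX^\nn)$ for $\mu$-almost all $\om$.

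The remaining step is the computation of $Q(xX^\nn)$ and the identification with $\sum_{s_0\arr{x_0}{x}s_1}\bt(s_0,x_0)$. Since $Q=\lags P$ and $\tilde\la$ is a $1$-block factor map with $\tilde\la((s,x')\tilde\om)=\la(s,x')\,\tilde\la(\tilde\om)$, the preimage $\lag^{-1}(xX^\nn)$ is the union of all cylinders $(s_0,x_0)(S\times X)^\nn$ for which $\la(s_0,x_0)=x$. Using $Q(xX^\nn)=P(\lag^{-1}(xX^\nn))$ and the fact that $P\bigl((s_0,x_0)(S\times X)^\nn\bigr)=\bt_{(s_0,x_0)}$, this sum becomes $\sum_{(s_0,x_0):\,\la(s_0,x_0)=x}\bt(s_0,x_0)$, which is exactly $\sum_{s_0\arr{x_0}{x}s_1}\bt(s_0,x_0)$ once we note that an edge $s_0\arr{x_0}{x}s_1$ in the Moore diagram is precisely a pair $(s_0,x_0)$ with $\la(s_0,x_0)=x$ and $s_1=\pi(s_0,x_0)$.

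I expect the main conceptual point to be the transfer argument in the second paragraph: the observable is measured along $g(\om)$, not along $\om$, so one cannot apply Birkhoff directly to $\mu$. The absolute continuity $g_*\mu\ll Q$ is what licenses replacing the $Q$-full convergence set by a $g_*\mu$-full one, and this is where the positivity of $\bt$ — guaranteed by irreducibility of $T$ via $L$-strong connectedness — is essential. The rest is bookkeeping: verifying that $P$ is ergodic so that Lemma \ref{PQ} yields ergodicity of $Q$, and carefully unwinding the $1$-block factor map to evaluate $Q(xX^\nn)$.
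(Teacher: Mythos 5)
Your proposal is correct and follows essentially the same route as the paper's own proof: Lemma \ref{markovirr} gives irreducibility of $T$, hence ergodicity of $P$ and positivity of $\bt$; Lemma \ref{PQ} transfers ergodicity to $Q$; Birkhoff applied to $Q$ plus the absolute continuity $g_*\mu\ll Q$ from Lemma \ref{gsmuQ} yields the $\mu$-a.e.\ statement; and the evaluation of $Q(xX^\nn)$ via the $1$-block factor map produces the sum over edges. No gaps.
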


\begin{proof}
We are going to use the measures $P$ and $Q$ defined above.  Since the 
automaton $\cA$ is $L$-strongly connected, the Markov chain defined by the
matrix $T$ is irreducible due to Lemma \ref{markovirr}.  It follows that 
the stationary vector $\bt$ is unique and positive.  Besides, the Markov 
measure $P$ defined by $T$ and $\bt$ is ergodic.  Then Lemma \ref{PQ} 
implies that the measure $Q$ is also shift-invariant and ergodic.  By the 
Birkhoff ergodic theorem, for $Q$-almost all $v\in X^\nn$ we have
\begin{align*}
\lim_{n\to\infty}\frac{1}{n} \sum_{i=0}^{n-1} \chi_{xX^\nn}(\sigma^i(v))
&= \int_{X^\nn} \chi_{xX^\nn} \,dQ
= Q(xX^\nn)
= (\lags P)(xX^\nn)\\
&= \sum_{s_0 \arr{x_0}{x} s_1} P\bigl((s_0, x_0)(S\times X)^\nn\bigr)
= \sum_{s_0 \arr{x_0}{x} s_1} \bt(s_0,x_0),
\end{align*}
where the last two sums are over valid edges in the Moore diagram of the 
automaton $\cA$.  Since all coordinates of the vector $\bt$ are positive, 
Lemma \ref{gsmuQ} implies that the measure $g_*\mu$ is absolutely 
continuous with respect to $Q$.  Therefore the above equality also holds 
for $g_*\mu$-almost all $v\in X^\nn$.  In other words, if $v=g(\om)$ then 
the equality holds for $\mu$-almost all $\om\in X^\nn$.
\end{proof}

Theorem \ref{mainthm} allows to calculate frequencies with which various 
symbols $x\in X$ appear in a sequence $g(\om)$, where $\om$ is 
$\mu$-generic.  A generalization to frequencies of arbitrary words over the 
alphabet $X$ is straightforward.

\begin{theorem}\label{mainthm1}
Let $\mu$ be a Markov measure on $X^\nn$ defined by an irreducible 
stochastic matrix $L$.  Suppose a transformation $g:X^\nn\to X^\nn$ is 
generated by an automaton $\cA$.  If the automaton $\cA$ is $L$-strongly 
connected then for any nonempty word $u=u_1u_2\dots u_k\in X^*$ and 
$\mu$-almost all $\om\in X^\nn$,
\[
\lim_{n\to\infty}\frac{1}{n}\sum_{i=0}^{n-1} \chi_{uX^\nn}(\sigma^ig(\om)) =
\sum_{s_0 \arr{x_0}{u_1} s_1 \ldots \arr{x_{k-1}}{u_k} s_k}
\bt_{(s_0, x_0)} L_{x_0x_1}\ldots L_{x_{k-2}x_{k-1}},
\]
where $\bt$ is the stationary probability vector of the stochastic matrix 
$T=T_{L,\cA}$ defined in \eqref{eq:T} and the sum is over paths in the 
Moore diagram of $\cA$.
\end{theorem}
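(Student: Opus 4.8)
The plan is to follow exactly the same strategy as in the proof of Theorem \ref{mainthm}, replacing the single-symbol cylinder $xX^\nn$ with the word cylinder $uX^\nn$ throughout. As before, since $\cA$ is $L$-strongly connected, Lemma \ref{markovirr} guarantees that the matrix $T$ is irreducible, hence its stationary vector $\bt$ is unique and positive and the Markov measure $P$ is ergodic; Lemma \ref{PQ} then promotes this to ergodicity of $Q=\lags P$, and the positivity of $\bt$ together with Lemma \ref{gsmuQ} gives that $g_*\mu$ is absolutely continuous with respect to $Q$. Thus it suffices to prove the displayed identity for $Q$-almost all $v$, and the absolute continuity automatically transfers the conclusion to $g_*\mu$-almost all $v=g(\om)$, i.e.\ to $\mu$-almost all $\om$.

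The first substantive step is to apply the Birkhoff ergodic theorem to the shift-invariant ergodic measure $Q$ and the integrable function $\chi_{uX^\nn}$, which yields
\[
\lim_{n\to\infty}\frac1n\sum_{i=0}^{n-1}\chi_{uX^\nn}(\sigma^i(v))
=\int_{X^\nn}\chi_{uX^\nn}\,dQ = Q(uX^\nn)
\]
for $Q$-almost all $v\in X^\nn$. The remaining work is therefore purely a computation of the measure $Q(uX^\nn)=(\lags P)(uX^\nn)=P\bigl(\lag^{-1}(uX^\nn)\bigr)$. Since $\lag=\tilde\lambda$ is a $1$-block factor map, its action on a sequence $(s_0,x_0)(s_1,x_1)\ldots$ reads off the output letters $\lambda(s_i,x_i)$, so the preimage $\lag^{-1}(uX^\nn)$ is exactly the union, over all paths $s_0\arr{x_0}{u_1}s_1\arr{x_1}{u_2}\ldots\arr{x_{k-1}}{u_k}s_k$ of length $k$ in the Moore diagram whose output labels spell $u$, of the cylinders $(s_0,x_0)(s_1,x_1)\ldots(s_{k-1},x_{k-1})(S\times X)^\nn$. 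These cylinders are pairwise disjoint, so $Q(uX^\nn)$ is the sum of their $P$-measures.

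The final step is to substitute the explicit formula for the measure $P$ on cylinders derived just before Lemma \ref{pmllp}, namely $P\bigl((s_0,x_0)\ldots(s_{k-1},x_{k-1})(S\times X)^\nn\bigr)=\bt_{(s_0,x_0)}L_{x_0x_1}\ldots L_{x_{k-2}x_{k-1}}$, which follows from the relation $T_{(s_i,x_i)(s_{i+1},x_{i+1})}=L_{x_ix_{i+1}}$ along any valid path. Summing over the admissible paths produces precisely the claimed expression $\sum \bt_{(s_0,x_0)}L_{x_0x_1}\ldots L_{x_{k-2}x_{k-1}}$. I do not anticipate a genuine obstacle here, as the argument is a direct generalization of the $k=1$ case already treated in Theorem \ref{mainthm}; the only point demanding a little care is the bookkeeping of the length-$k$ paths and ensuring the output labels $y_i$ are constrained to equal $u_1,\ldots,u_k$ while the intermediate transition letters $x_0,\ldots,x_{k-1}$ and the terminal state $s_k$ are free to range over all consistent choices, so that no admissible path is double-counted or omitted.
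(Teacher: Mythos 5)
Your proposal is correct and is precisely the argument the paper intends: the paper omits the proof of Theorem \ref{mainthm1} as ``completely analogous'' to that of Theorem \ref{mainthm}, and you carry out exactly that analogy (ergodicity of $Q$ via Lemmas \ref{markovirr} and \ref{PQ}, Birkhoff applied to $\chi_{uX^\nn}$, transfer to $g_*\mu$ via Lemma \ref{gsmuQ}, and computation of $Q(uX^\nn)$ by summing $P$ over length-$k$ paths outputting $u$). The one cosmetic imprecision is that $\lag^{-1}(uX^\nn)$ is the union of \emph{all} cylinders $(s_0,x_0)\ldots(s_{k-1},x_{k-1})(S\times X)^\nn$ with $\la(s_i,x_i)=u_{i+1}$, not only those forming valid paths in the Moore diagram; but the cylinders violating $\pi(s_i,x_i)=s_{i+1}$ have $P$-measure zero, so your formula for $Q(uX^\nn)$ is unaffected.
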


The proof is completely analogous to that of Theorem \ref{mainthm} and we 
omit it.  For examples of calculations using Theorems \ref{mainthm} and 
\ref{mainthm1}, see Section \ref{examples} below.

Suppose $K=(K_{ss'})_{s,s'\in S}$ is a stochastic matrix that defines a 
Markov chain on the set $S$.  The tensor product $K\otimes L$ is an array 
of numbers indexed by two states $s,s'\in S$ and two symbols $x,x'\in X$, 
and given by $(K\otimes L)_{(s,x)(s',x')}=K_{ss'}L_{xx'}$.  We regard 
$K\otimes L$ as a matrix which rows and columns are indexed by elements of 
$S\times X$, not as a $4$-dimensional array.  Then $K\otimes L$ is 
stochastic and defines a Markov chain on $S\times X$.  Suppose $\bk$ is a 
stationary probability vector of the matrix $K$.  The tensor product 
$\bk\otimes\bl$ is an array of numbers indexed by pairs $(s,x)\in S\times 
X$ and given by $(\bk\otimes\bl)_{(s,x)}=\bk_s\bl_x$.  We regard it as a 
vector, not as a matrix.  Then $\bk\otimes\bl$ is a stationary probability 
vector of the matrix $K\otimes L$.

Recall that the matrix $T=T_{L,\cA}$ defines a Markov chain on $S\times 
X$.  Unfortunately, $T$ cannot be represented as the tensor product of $L$ 
with another stochastic matrix.  Nevertheless, in some cases the stationary 
probability vector $\bt$ does decompose as the tensor product of $\bl$ with 
another probability vector, which allows to simplify the formulas in 
Theorems \ref{mainthm} and \ref{mainthm1}.

Let us define a matrix $K=K_{\bl,\cA}$ by
\begin{equation}\label{eq:K}
K_{s_0s_1}= \sum_{x:\, \pi(s_0, x) = s_1} \bl(x)
=\sum_{s_0\arr{x}{y}s_1} \bl(x)
\end{equation}
for all $s_0,s_1\in S$ (in the second formula, the sum is over valid edges 
in the Moore diagram of the automaton $\cA$).  For any $s\in S$ we have 
$\sum_{r}K_{sr}=\sum_x \bl(x)=1$ so that $K$ is indeed a stochastic matrix.
Since the vector $\bl$ is positive, it follows that $K$ is irreducible if 
and only if the automaton $\cA$ is strongly connected.

\begin{lemma}\label{tensorB}
Suppose $\bk$ is a stationary probability vector of $K$.  If the Markov 
measure defined by $L$ is Bernoulli, then $\bk\otimes\bl$ is a stationary 
probability vector of $T$.
\end{lemma}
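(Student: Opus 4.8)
The plan is to verify directly that $\bk\otimes\bl$ satisfies the stationarity identity $(\bk\otimes\bl)\,T=\bk\otimes\bl$, exploiting the special form the matrix $T$ takes when the measure defined by $L$ is Bernoulli. First I would dispose of the easy part: by construction $\bk\otimes\bl$ has nonnegative entries and
\[
\sum_{(s,x)\in S\times X}\bk_s\bl_x=\Bigl(\sum_{s\in S}\bk_s\Bigr)\Bigl(\sum_{x\in X}\bl_x\Bigr)=1,
\]
so it is automatically a probability vector. The only thing requiring proof is stationarity under $T$.

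To establish it, I would fix a coordinate $(s_1,x_1)\in S\times X$ and compute the corresponding entry of $(\bk\otimes\bl)\,T$, namely
\[
\sum_{(s_0,x_0)\in S\times X}\bk_{s_0}\bl_{x_0}\,T_{(s_0,x_0)(s_1,x_1)}.
\]
By the definition \eqref{eq:T}, the only nonzero terms are those with $\pi(s_0,x_0)=s_1$, and each such term equals $\bk_{s_0}\bl_{x_0}L(x_0,x_1)$. The key step is to invoke the Bernoulli hypothesis: each row of $L$ coincides with $\bl$, so $L(x_0,x_1)=\bl_{x_1}$ irrespective of $x_0$. This decouples the dependence on $x_1$ and lets me pull $\bl_{x_1}$ out of the sum.

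After that extraction, the remaining sum is $\sum_{s_0}\bk_{s_0}\sum_{x_0:\,\pi(s_0,x_0)=s_1}\bl_{x_0}$, and by the definition \eqref{eq:K} the inner sum is precisely $K_{s_0s_1}$. Hence the whole expression becomes $\bl_{x_1}\sum_{s_0}\bk_{s_0}K_{s_0s_1}$, which by the stationarity relation $\bk K=\bk$ collapses to $\bl_{x_1}\bk_{s_1}=(\bk\otimes\bl)_{(s_1,x_1)}$, completing the verification. The entire argument is a short computation, so the only real content — and the single place where the hypothesis is consumed — is recognizing that being Bernoulli removes the dependence of $L(x_0,x_1)$ on the first index $x_0$, which is exactly what permits the factorization of the $T$-sum through the matrix $K$. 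For a general (non-Bernoulli) $L$ this decoupling fails, so one should not expect $\bt$ to factor as $\bk\otimes\bl$ in that generality, consistent with the lemma being stated only in the Bernoulli case.
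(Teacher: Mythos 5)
Your proof is correct and follows essentially the same route as the paper's: a direct coordinate-wise verification of $(\bk\otimes\bl)T=\bk\otimes\bl$, using the Bernoulli identity $L_{x_0x_1}=\bl_{x_1}$ to factor the sum through $K$ and then invoking $\bk K=\bk$. No substantive difference.
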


\begin{proof}
The vector $\bk=(\bk_s)_{s\in S}$ satisfies $\sum_{s}\bk_sK_{ss'}=\bk_{s'}$ 
for all $s'\in S$.  We need to show that $\sum_{s,x} \bk_s\bl_x 
T_{(s,x)(s',x')}=\bk_{s'}\bl_{x'}$ for all $(s',x')\in S\times X$.  The 
Markov measure defined by $L$ is Bernoulli if $L_{xx'}=\bl_{x'}$ for all 
$x,x'\in X$.  Then $T_{(s,x)(s',x')}=\bl_{x'}$ if $\pi(s,x)=s'$ and $0$ 
otherwise.  It follows that $\sum_{x} \bl_xT_{(s,x)(s',x')} 
=K_{ss'}\bl_{x'}$.  Consequently, $\sum_{s,x} \bk_s\bl_x 
T_{(s,x)(s',x')}=\sum_s \bk_sK_{ss'}\bl_{x'} =\bk_{s'}\bl_{x'}$.
\end{proof}

Combining Lemma \ref{tensorB} with Theorem \ref{mainthm1}, we obtain the 
following result (in the case of one-letter words, it was proved by 
Kravchenko \cite{Krav}).

\begin{theorem}\label{mainthmB}
Let $\mu$ be a Bernoulli measure on $X^\nn$ defined by a positive 
probability vector $\bl$.  Suppose a transformation $g:X^\nn\to X^\nn$ is 
generated by an automaton $\cA$.  If the automaton $\cA$ is strongly 
connected then for any nonempty word $u=u_1u_2\dots u_k\in X^*$ and 
$\mu$-almost all $\om\in X^\nn$,
\[
\lim_{n\to\infty}\frac{1}{n}\sum_{i=0}^{n-1} \chi_{uX^\nn}(\sigma^ig(\om)) =
\sum_{s_0 \arr{x_0}{u_1} s_1 \ldots \arr{x_{k-1}}{u_k} s_k}
\bk_{s_0}\bl_{x_0}\bl_{x_1}\ldots \bl_{x_{k-1}},
\]
where $\bk$ is the stationary probability vector of the stochastic matrix 
$K=K_{\bl,\cA}$ defined in \eqref{eq:K} and the sum is over paths in the 
Moore diagram of $\cA$.
\end{theorem}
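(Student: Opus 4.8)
The plan is to derive this statement directly from Theorem \ref{mainthm1} and Lemma \ref{tensorB}, so the real work is checking that the hypotheses line up and then performing a clean substitution. First I would verify that, under the present assumptions, the automaton $\cA$ is in fact $L$-strongly connected, which is what Theorem \ref{mainthm1} actually requires. Since $\mu$ is Bernoulli with positive vector $\bl$, every row of $L$ equals $\bl$, so $L_{xy}=\bl_y>0$ for all $x,y\in X$ and the Markov chain defined by $L$ has no forbidden words. Hence the $L$-strong connectivity condition collapses to the requirement that for all states $s,r$ and any symbol $x$ there is a word $w$ with $\pi(s,xw)=r$. Reading $x$ moves $s$ to $\pi(s,x)$, and from there strong connectivity of the Moore diagram supplies an input word reaching $r$; concatenating gives the required $w$. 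Thus strong connectivity of $\cA$ implies $L$-strong connectivity, and Theorem \ref{mainthm1} applies.

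Next I would pin down the stationary vectors. By Lemma \ref{markovirr}, $L$-strong connectivity makes the chain defined by $T=T_{L,\cA}$ irreducible, so its stationary probability vector $\bt$ is unique and positive. Likewise, since $\bl$ is positive, the matrix $K=K_{\bl,\cA}$ of \eqref{eq:K} is irreducible (as remarked just before Lemma \ref{tensorB}), whence its stationary probability vector $\bk$ is also unique and positive, so the vector $\bk$ appearing in the statement is well defined. Now Lemma \ref{tensorB} asserts that $\bk\otimes\bl$ is a stationary probability vector of $T$; by the uniqueness just noted, $\bt=\bk\otimes\bl$, i.e.\ $\bt_{(s_0,x_0)}=\bk_{s_0}\bl_{x_0}$ for all $(s_0,x_0)\in S\times X$.

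Finally I would substitute into the formula of Theorem \ref{mainthm1}. Using $\bt_{(s_0,x_0)}=\bk_{s_0}\bl_{x_0}$ together with $L_{x_ix_{i+1}}=\bl_{x_{i+1}}$ (valid because $\mu$ is Bernoulli), the summand $\bt_{(s_0,x_0)}L_{x_0x_1}\cdots L_{x_{k-2}x_{k-1}}$ becomes $\bk_{s_0}\bl_{x_0}\bl_{x_1}\cdots\bl_{x_{k-1}}$, where the product of the $k-1$ factors $L_{x_ix_{i+1}}$ contributes exactly $\bl_{x_1}\cdots\bl_{x_{k-1}}$. This is precisely the summand in the asserted formula, and the sum is over the same set of paths $s_0\arr{x_0}{u_1}s_1\ldots\arr{x_{k-1}}{u_k}s_k$ in the Moore diagram, so the two expressions coincide. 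I do not anticipate any genuine obstacle here: the heavy lifting is done by Theorem \ref{mainthm1} and Lemma \ref{tensorB}, and the only point that needs care is the hypothesis translation in the first step, since Theorem \ref{mainthm1} is stated for $L$-strongly connected automata whereas here only plain strong connectivity is assumed.
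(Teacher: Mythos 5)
Your proposal is correct and follows exactly the route the paper takes: the paper derives Theorem \ref{mainthmB} by combining Lemma \ref{tensorB} with Theorem \ref{mainthm1}, relying on the observation (made just before the definition of $L$-strong connectivity) that positivity of all entries of $L$ makes strong connectivity of $\cA$ equivalent to $L$-strong connectivity, and on the uniqueness of $\bt$ to conclude $\bt=\bk\otimes\bl$. Your write-up simply makes explicit the hypothesis translation and the substitution that the paper leaves to the reader.
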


If the Markov measure defined by $L$ is not Bernoulli, the vector $\bt$ 
need not decompose as $\bk\otimes\bl$ (see Example \ref{non-tens} below).  
However there is a large, well known class of automata for which Lemma 
\ref{tensorB} does hold for a general stochastic matrix $L$.  We consider 
that class in the next section.

\section{Reversible automata}\label{reversible}

\begin{definition}
An automaton $\cA=(X,S,\pi,\la)$ is called \textbf{reversible} if for any 
$s\in S$ and any $x\in X$ there exists a unique state $s_0\in S$ such that 
$\pi(s_0,x)=s$.
\end{definition}

Suppose $\cA=(X,S,\pi,\la)$ is a reversible automaton.  For any $s\in S$ 
and any $x\in X$ let $\overleftarrow{\pi}(s,x)$ be a unique state such that 
$\pi\bigl(\overleftarrow{\pi}(s,x),x\bigr)=s$.  Also, let 
$\overleftarrow{\la}(s,x)=\la\bigl(\overleftarrow{\pi}(s,x),x\bigr)$.  Then 
$\overleftarrow{\cA}=(S,X,\overleftarrow{\pi},\overleftarrow{\la})$ is 
called the \textbf{reverse automaton} of $\cA$.  In terms of the Moore 
diagrams, the automaton $\overleftarrow{\cA}$ is obtained from $\cA$ by 
reversing all edges.  That is, every edge of the form $s_0\arr{x}{y}s_1$ is 
replaced by $s_1\arr{x}{y}s_0$.  The automaton $\overleftarrow{\cA}$ is 
also reversible and its reverse automaton is $\cA$.

Suppose $L$ is a stochastic matrix that defines a Markov chain on $X$ and 
$\bl$ is a stationary probability vector of $L$. 

\begin{lemma}\label{tensor-rev}
If an automaton $\cA=(X,S,\pi,\la)$ is reversible then the constant vector 
$\bk=\frac{1}{|S|}(1,1,\ldots,1)$ is a stationary probability vector of the 
stochastic matrix $K=K_{\bl,\cA}$ defined in \eqref{eq:K} while $\bk\otimes 
\bl$ is a stationary probability vector of the stochastic matrix 
$T=T_{L,\cA}$ defined in \eqref{eq:T}.
\end{lemma}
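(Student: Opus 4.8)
The plan is to extract the one structural fact that reversibility provides and feed it into two direct linear-algebra computations. Reversibility says precisely that for each fixed letter $x\in X$ the map $s\mapsto\pi(s,x)$ is a bijection of $S$: every target $s$ has a unique predecessor $\overleftarrow\pi(s,x)$ under the letter $x$. Equivalently, $\pi(\cdot,x)$ is a permutation of $S$ for every $x$. With this in hand, both assertions reduce to bookkeeping, the only point of care being to organize the sums over incoming edges at a fixed target state rather than over outgoing edges.

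First I would show that $\bk$ is stationary for $K$. Because $\bk$ is the uniform vector, $\bk K=\bk$ is equivalent to $K$ being column-stochastic, i.e. $\sum_{s_0}K_{s_0s_1}=1$ for every $s_1$. Using the definition \eqref{eq:K},
\[
\sum_{s_0}K_{s_0s_1}=\sum_{s_0}\ \sum_{x:\,\pi(s_0,x)=s_1}\bl(x)
=\sum_{x\in X}\ \sum_{s_0:\,\pi(s_0,x)=s_1}\bl(x).
\]
Reversibility makes the inner sum on the right a single term for each $x$, so the whole expression collapses to $\sum_{x\in X}\bl(x)=1$. Thus $K$ is doubly stochastic and the uniform vector $\bk$ is indeed stationary.

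Finally I would verify that $\bk\otimes\bl$ is stationary for $T$, that is, $\sum_{(s_0,x_0)}\bk_{s_0}\bl_{x_0}\,T_{(s_0,x_0)(s_1,x_1)}=\bk_{s_1}\bl_{x_1}$ for every $(s_1,x_1)\in S\times X$. Substituting \eqref{eq:T} and separating the variables,
\[
\sum_{s_0,x_0}\bk_{s_0}\bl_{x_0}\,T_{(s_0,x_0)(s_1,x_1)}
=\sum_{x_0}\bl_{x_0}L(x_0,x_1)\sum_{s_0:\,\pi(s_0,x_0)=s_1}\bk_{s_0}.
\]
By reversibility the inner sum over $s_0$ consists of one term, equal to $\tfrac{1}{|S|}$ and independent of $x_0$, so the expression becomes $\tfrac{1}{|S|}\sum_{x_0}\bl_{x_0}L(x_0,x_1)=\tfrac{1}{|S|}(\bl L)_{x_1}$. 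Invoking $\bl L=\bl$ at the last step gives $\tfrac{1}{|S|}\bl_{x_1}=\bk_{s_1}\bl_{x_1}$, as required. I do not expect a genuine obstacle: once reversibility is read as the permutation property of $\pi(\cdot,x)$, each part is essentially a one-line computation, and the sole thing to track is that the inner state-sum always picks out exactly one predecessor so that the constant value $\tfrac{1}{|S|}$ factors out cleanly before $\bl L=\bl$ is used.
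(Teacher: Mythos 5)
Your proposal is correct and follows essentially the same route as the paper: both parts rest on the observation that reversibility makes $s\mapsto\pi(s,x)$ a bijection of $S$ for each $x$, so that the column sums of $K$ equal $1$ and $\sum_{s_0}T_{(s_0,x_0)(s_1,x_1)}=L(x_0,x_1)$, after which $\bl L=\bl$ finishes the second claim. The paper phrases the first step via the reverse automaton (the transpose of $K_{\bl,\cA}$ is $K_{\bl,\overleftarrow{\cA}}$, hence stochastic), but that is the same double-stochasticity fact you establish directly.
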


\begin{proof}
For any $s,s'\in S$,
\[
K_{\bl,\cA}(s,s')=\sum_{s\arr{x}{y}s'} \bl(x).
\]
It follows that the transpose of the matrix $K_{\bl,\cA}$ is 
$K_{\bl,\overleftarrow{\cA}}$.  As a consequence, the transpose is 
stochastic as well.  Then $\sum_s K_{\bl,\cA}(s,s')=1$ for all $s'\in S$, 
which implies that $\bk K_{\bl,\cA}=\bk$.

To prove the second statement of the lemma, it is enough to show that
\[
\sum_{s,x} \bl_xT_{(s,x)(s',x')}=\bl_{x'}
\]
for all $(s',x')\in S\times X$.  Note that $T_{(s,x)(s',x')}=L_{xx'}$ if 
$s=\overleftarrow{\pi}(s',x)$ and $0$ otherwise.  It follows that 
$\sum_{s}T_{(s,x)(s',x')}=L_{xx'}$.  Then $\sum_{s,x}\bl_xT_{(s,x)(s',x')}= 
\sum_x \bl_xL_{xx'}=\bl_{x'}$.
\end{proof}

Combining Lemma \ref{tensor-rev} with Theorem \ref{mainthm1}, we obtain the 
following result.

\begin{theorem}\label{mainthm-rev}
Let $\mu$ be a Markov measure on $X^\nn$ defined by an irreducible 
stochastic matrix $L$ with stationary probability vector $\bl$.  Suppose a 
transformation $g:X^\nn\to X^\nn$ is generated by an automaton $\cA$.  If 
the automaton $\cA$ is $L$-strongly connected and reversible, then for any 
nonempty word $u=u_1u_2\dots u_k\in X^*$ and $\mu$-almost all $\om\in 
X^\nn$,
\[
\lim_{n\to\infty}\frac{1}{n}\sum_{i=0}^{n-1} \chi_{uX^\nn}(\sigma^ig(\om)) =
\frac1{N}\sum_{s_0 \arr{x_0}{u_1} s_1 \ldots \arr{x_{k-1}}{u_k} s_k}
\bl_{x_0}L_{x_0x_1}\ldots L_{x_{k-2}x_{k-1}},
\]
where $N$ is the number of states in $\cA$ and the sum is over paths in the 
Moore diagram of $\cA$.
\end{theorem}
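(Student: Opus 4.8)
The plan is to derive this theorem as a direct specialization of Theorem \ref{mainthm1}, exploiting the reversibility hypothesis through Lemma \ref{tensor-rev}. The formula in Theorem \ref{mainthm1} is expressed in terms of the stationary probability vector $\bt$ of the matrix $T=T_{L,\cA}$, so the entire task reduces to identifying $\bt$ explicitly under the present assumptions and substituting it into that formula.

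First I would invoke $L$-strong connectivity: by Lemma \ref{markovirr}, this is equivalent to the Markov chain defined by $T$ being irreducible. Irreducibility guarantees that $T$ has a unique and strictly positive stationary probability vector, so the vector $\bt$ appearing in Theorem \ref{mainthm1} is unambiguously determined. Next, since $\cA$ is reversible, Lemma \ref{tensor-rev} asserts that $\bk\otimes\bl$ is a stationary probability vector of $T$, where $\bk=\frac1N(1,1,\ldots,1)$ and $N=|S|$. By the uniqueness just established, these two stationary vectors must coincide, that is, $\bt=\bk\otimes\bl$, which means $\bt_{(s_0,x_0)}=\frac1N\bl_{x_0}$ for every $(s_0,x_0)\in S\times X$.

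It then remains to substitute this expression into the conclusion of Theorem \ref{mainthm1}. Replacing $\bt_{(s_0,x_0)}$ by $\frac1N\bl_{x_0}$ in the sum
\[
\sum_{s_0 \arr{x_0}{u_1} s_1 \ldots \arr{x_{k-1}}{u_k} s_k}
\bt_{(s_0, x_0)} L_{x_0x_1}\ldots L_{x_{k-2}x_{k-1}}
\]
and pulling the constant $\frac1N$ outside the summation yields exactly the claimed formula. Because Theorem \ref{mainthm1} already supplies the limit identity for $\mu$-almost all $\om$, the almost-everywhere conclusion transfers verbatim, with no further analysis required.

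Since the argument is a clean chaining of two already-established results, there is no genuine obstacle; the one point that requires a moment's care is the appeal to uniqueness of $\bt$, which is precisely why $L$-strong connectivity (rather than mere strong connectivity) is assumed. Without irreducibility of $T$, Lemma \ref{tensor-rev} would exhibit only one among possibly many stationary vectors, and it would not be legitimate to identify that vector with the $\bt$ governing the frequency formula.
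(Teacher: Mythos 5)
Your proposal is correct and is exactly the paper's argument: the authors state Theorem \ref{mainthm-rev} as an immediate consequence of combining Lemma \ref{tensor-rev} with Theorem \ref{mainthm1}. Your explicit attention to the uniqueness of $\bt$ (via Lemma \ref{markovirr} and irreducibility of $T$) is a point the paper leaves implicit, but it is the right justification and matches the intended reasoning.
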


A remarkable feature of the reversible automata is that their states act 
naturally on bi-infinite sequences over the alphabet.  Suppose 
$\cA=(X,S,\pi,\la)$ is a reversible automaton and let $w=\ldots 
x_{-2}x_{-1}x_0.x_1x_2x_3\ldots$ be a bi-infinite sequence in $X^\zz$ (the
dot between $x_0$ and $x_1$ serves as a reference point).  Given a state 
$g\in S$, we need to find a bi-infinite path in the Moore diagram of $\cA$ 
of the form
\[
\ldots \arr{x_{-2}}{y_{-2}} s_{-2}\arr{x_{-1}}{y_{-1}} s_{-1}\arr{x_0}{y_0} 
g \arr{x_1}{y_1} s_1 \arr{x_2}{y_2} s_2 \arr{x_3}{y_3} \ldots
\]
Since the automaton $\cA$ is reversible, such a path exists and is unique.  
Then, by definition, $g(w)=\ldots y_{-2}y_{-1}y_0.y_1y_2y_3\ldots$.  Let 
$g^+$ denote the action of the same state $g$ on $X^\nn$ and $g^-$ denote 
the action of $g$ on $X^\nn$ when $g$ is regarded as a state of the reverse 
automaton $\overleftarrow{\cA}$.  Then $y_1y_2y_3\ldots= 
g^+(x_1x_2x_3\ldots)$ and $y_0y_{-1}y_{-2}\ldots= 
g^-(x_0x_{-1}x_{-2}\ldots)$.

The \textbf{two-sided shift} on $X^\zz$ (still denoted by $\si$) is defined 
by
\[
\si(\ldots x_{-2}x_{-1}x_0.x_1x_2x_3\ldots)=\ldots 
x_{-1}x_0x_1.x_2x_3x_4\ldots
\]
Unlike the shift on $X^\nn$, it is invertible.  Given a stochastic matrix 
$L=(L_{xx'})_{x,x'\in X}$ with a stationary probability vector 
$\bl=(\bl_x)_{x\in X}$, a Markov measure $\mu$ on $X^\zz$ is defined on the
cylinders by
\[
\mu\bigl(\{\ldots w_{-2}w_{-1}w_0.w_1w_2\ldots\mid
w_i=x_i,\ m\le i\le n\}\bigr)=\bl_{x_m}L_{x_mx_{m+1}}\ldots L_{x_{n-1}x_n}
\]
for any $m,n\in\zz$, $m\le n$ and any $x_m,x_{m+1},\dots,x_n\in X$.  The 
measure $\mu$ is shift-invariant.  It is ergodic if $L$ is irreducible.

For any nonempty word $u=u_1u_2\dots u_k\in X^*$ consider a cylinder 
$\Omega_u\subset X^\zz$ defined by $\Omega_u=\{\ldots 
w_{-2}w_{-1}w_0.w_1w_2 \ldots \mid w_i=u_i,\ 1\le i\le k\}$.  Given $w\in 
X^\zz$, the limit
\[
\lim_{n\to\infty}\frac{1}{n}\sum_{i=0}^{n-1} \chi_{\Omega_u}(\sigma^i(w)),
\]
if it exists, yields the (asymptotic) frequency at which the word $u$ 
occurs in the right-hand half of the bi-infinite sequence $w$.  Likewise, 
the limit
\[
\lim_{n\to\infty}\frac{1}{n}\sum_{i=0}^{n-1} 
\chi_{\Omega_u}(\sigma^{-i}(w)),
\]
if it exists, yields the frequency at which $u$ occurs in the left-hand 
half of $w$.

Now we can formulate an analogue of Theorem \ref{mainthm-rev} for 
bi-infinite sequences.

\begin{theorem}\label{mainthm-bi}
Let $\mu$ be a Markov measure on $X^\zz$ defined by an irreducible 
stochastic matrix $L$ with stationary probability vector $\bl$.  Suppose a 
transformation $g:X^\zz\to X^\zz$ is generated by a reversible automaton 
$\cA$.  If the automaton $\cA$ is $L$-strongly connected then for any 
nonempty word $u=u_1u_2\dots u_k\in X^*$ and $\mu$-almost all $w\in X^\zz$,
\begin{align*}
\lim_{n\to\infty}\frac{1}{n}\sum_{i=0}^{n-1} \chi_{\Omega_u}(\sigma^ig(w)) &
=\lim_{n\to\infty}\frac{1}{n}\sum_{i=0}^{n-1} 
\chi_{\Omega_u}(\sigma^{-i}g(w))\\
& =\frac1{N}\sum_{s_0 \arr{x_0}{u_1} s_1 \ldots \arr{x_{k-1}}{u_k} s_k}
\bl_{x_0}L_{x_0x_1}\ldots L_{x_{k-2}x_{k-1}},
\end{align*}
where $N$ is the number of states in $\cA$ and the sum is over paths in the 
Moore diagram of $\cA$.
\end{theorem}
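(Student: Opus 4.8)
The plan is to lift the dynamics to the alphabet $S\times X$, exactly as in Section \ref{strong}, but now over $\zz$, and then invoke the Birkhoff ergodic theorem for the (invertible) two-sided shift, which produces the forward and backward averages simultaneously and with the same limit. Concretely, I would let $P$ be the two-sided stationary Markov measure on $(S\times X)^\zz$ with transition matrix $T=T_{L,\cA}$ and initial distribution $\bt=\bk\otimes\bl$, where $\bk=\frac1N(1,\dots,1)$; that $\bt$ is stationary for $T$ is Lemma \ref{tensor-rev}. Since $\cA$ is $L$-strongly connected, $T$ is irreducible by Lemma \ref{markovirr}, so $\bt$ is the unique stationary vector, $P$ is ergodic for the two-sided shift, and (by the same argument as Lemma \ref{PQ}, which only uses that $\tilde\lambda$ intertwines the shifts) the pushforward $Q=\tilde\lambda_* P$ along the $1$-block factor $\tilde\lambda\colon(S\times X)^\zz\to X^\zz$, $(s,x)\mapsto\lambda(s,x)$, is shift-invariant and ergodic.

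Next I would encode a bi-infinite input together with its state history. Because $\cA$ is reversible, for each $w\in X^\zz$ the bi-infinite path through the Moore diagram with central state $g$ exists and is unique; writing $w=\ldots x_0.x_1\ldots$ and letting $\ldots,s_{-1},s_0=g,s_1,\ldots$ be the resulting states, define $\Phi\colon X^\zz\to(S\times X)^\zz$ by $\Phi(w)_i=(s_{i-1},x_i)$. Each coordinate of $\Phi(w)$ depends on only finitely many coordinates of $w$, so $\Phi$ is continuous, and since $(g(w))_i=\lambda(s_{i-1},x_i)$ we obtain the key intertwining $g=\tilde\lambda\circ\Phi$; thus $g_*\mu=\tilde\lambda_*\Phi_*\mu$.

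The heart of the argument, and the step I expect to be the main obstacle, is to show that $g_*\mu$ is absolutely continuous with respect to $Q$; this is the two-sided analogue of Lemmas \ref{pmllp} and \ref{gsmuQ}. For this I would prove that $\Phi_*\mu$ equals $P$ conditioned on the cylinder $Z=\{\tilde w:\tilde w_1\in\{g\}\times X\}$, which has $P(Z)=\sum_x\bt_{(g,x)}=\frac1N>0$; absolute continuity $\Phi_*\mu\ll P$ then follows, and hence $g_*\mu=\tilde\lambda_*\Phi_*\mu\ll\tilde\lambda_*P=Q$ (pushforward preserves absolute continuity). The identity $\Phi_*\mu=P(\,\cdot\mid Z)$ I would check on cylinders: the $x$-marginal of $P$ is precisely the two-sided Markov chain $\mu$ (since $\sum_{s_1}T_{(s_0,x_0)(s_1,x_1)}=L_{x_0x_1}$ and the $x$-marginal of $\bt$ is $\bl$), while fixing the central state to $g$ is compatible with $\Phi$ and multiplies every cylinder measure inside $Z$ by the constant $N$. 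The subtlety to get right is exactly that conditioning on the single central state does not distort the symbol marginal: given the bi-infinite symbol string together with one state, reversibility determines the entire state sequence both forward and backward, so fixing $s_0=g$ pins down the remaining states deterministically and leaves the law of $w$ untouched.

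With $g_*\mu\ll Q$ in hand, I would conclude as follows. Since $\sigma$ is invertible and $Q$ is $\sigma$-invariant and ergodic, $Q$ is also ergodic for $\sigma^{-1}$, so the Birkhoff theorem applied to $\chi_{\Omega_u}$ in both time directions yields a $Q$-full set of $v\in X^\zz$ on which
\[
\lim_{n\to\infty}\frac1n\sum_{i=0}^{n-1}\chi_{\Omega_u}(\sigma^i v)
=\lim_{n\to\infty}\frac1n\sum_{i=0}^{n-1}\chi_{\Omega_u}(\sigma^{-i}v)
=Q(\Omega_u).
\]
Because $g_*\mu\ll Q$, this set is also $g_*\mu$-full, i.e.\ the equalities hold for $v=g(w)$ for $\mu$-almost every $w$, giving both limits in the statement. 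It then remains only to evaluate $Q(\Omega_u)=P(\tilde\lambda^{-1}(\Omega_u))$ directly from the cylinder formula for $P$: summing $\bt_{(s_0,x_0)}L_{x_0x_1}\cdots L_{x_{k-2}x_{k-1}}$ over all length-$k$ paths $s_0\arr{x_0}{u_1}s_1\cdots\arr{x_{k-1}}{u_k}s_k$ in the Moore diagram and substituting $\bt_{(s_0,x_0)}=\frac1N\bl_{x_0}$ produces exactly $\frac1N\sum\bl_{x_0}L_{x_0x_1}\cdots L_{x_{k-2}x_{k-1}}$, the right-hand side of the theorem.
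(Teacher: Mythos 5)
Your proof is correct, but it takes a genuinely different route from the paper's. The paper deduces the theorem from the one-sided Theorem \ref{mainthm-rev}, applied twice: it projects $X^\zz$ onto $X^\nn$ via the two maps $F^{\pm}$ sending $w$ to its right half and to its reversed left half, observes that $F^+_*\mu$ is the Markov measure for $L$ while $F^-_*\mu$ is the Markov measure for the time-reversed matrix $\overleftarrow{L}_{xx'}=\bl_{x'}L_{x'x}/\bl_x$, verifies that the reverse automaton $\overleftarrow{\cA}$ is $\overleftarrow{L}$-strongly connected, and then applies Theorem \ref{mainthm-rev} to $(g^+,L,u)$ and to $(g^-,\overleftarrow{L},\overleftarrow{u})$, matching reversed paths and telescoping products to see that the two limits agree. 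You instead stay on $X^\zz$, lift to $(S\times X)^\zz$ via the bi-infinite state history $\Phi$ (which exists and is unique precisely because $\cA$ is reversible), and obtain both time directions at once from the Birkhoff theorem for the invertible ergodic system $(X^\zz,\sigma,Q)$. The step you single out as the main obstacle does go through exactly as you sketch: since $\bt=\bk\otimes\bl$ with $\bk$ uniform, every cylinder of $(S\times X)^\zz$ carrying a valid path with central state $g$ has $P$-measure equal to $\frac1N$ times the $\mu$-measure of its symbol cylinder, and reversibility guarantees that a valid path with $s_0=g$ is precisely the $\Phi$-determined one, so $\Phi_*\mu=P(\,\cdot\mid Z)$ on cylinders and hence everywhere; this is the correct two-sided replacement for Lemmas \ref{pmllp} and \ref{gsmuQ}. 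The trade-off between the two arguments: the paper's route reuses the one-sided machinery wholesale, at the cost of the $\overleftarrow{L}$ bookkeeping and a separate treatment of each time direction; yours is more symmetric, makes the equality of the forward and backward limits automatic, and yields the cleaner structural fact that $g_*\mu$ is the push-forward under $\tilde\lambda$ of a Markov measure conditioned on a positive-measure cylinder (hence $g_*\mu\ll Q$ with an explicit bound), at the cost of re-establishing the absolute-continuity lemmas over $\zz$ --- where, unlike in the one-sided setting, reversibility is needed even to define the lift.
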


\begin{proof}
We are going to use the transformations $g^+$ and $g^-$ defined above.  Let 
us also define maps $F^+,F^-:X^\zz\to X^\nn$ by
\begin{align*}
F^+(\ldots w_{-2}w_{-1}w_0.w_1w_2w_3\ldots) &=w_1w_2w_3\ldots,\\
F^-(\ldots w_{-2}w_{-1}w_0.w_1w_2w_3\ldots) &=w_0w_{-1}w_{-2}\ldots
\end{align*}
The maps $F^+$ and $F^-$ are continuous.  Consider the pushforward measures 
$\mu^+=F^+_*\mu$ and $\mu^-=F^-_*\mu$ on $X^\nn$.  The measure $\mu^+$ is 
clearly the Markov measure on $X^\nn$ defined by the same matrix $L$ and 
vector $\bl$.  By Theorem \ref{mainthm-rev},
\[
\lim_{n\to\infty}\frac{1}{n}\sum_{i=0}^{n-1} 
\chi_{uX^\nn}(\sigma^ig^+(\om)) =
\frac1{N}\sum_{s_0 \arr{x_0}{u_1} s_1 \ldots \arr{x_{k-1}}{u_k} s_k}
\bl_{x_0}L_{x_0x_1}\ldots L_{x_{k-2}x_{k-1}}
\]
for $\mu^+$-almost all $\om\in X^\nn$.  In other words, if $\om=F^+(w)$ 
then the latter equality holds for $\mu$-almost all $w\in X^\zz$.  Since
\[
\chi_{uX^\nn}(\sigma^ig^+(F^+(w)))=\chi_{uX^\nn}(\sigma^iF^+(g(w)))
=\chi_{uX^\nn}(F^+(\sigma^ig(w)))=\chi_{\Omega_u}(\sigma^ig(w))
\]
for all $w\in X^\zz$ and $i\ge0$, this establishes the first limit in the 
formulation of the theorem.

The second limit requires more work.  The measure $\mu^-$ is given on the 
cylinders by
\[
\mu^-(y_1y_2\ldots y_mX^\nn)=\bl_{y_m}L_{y_my_{m-1}}\ldots L_{y_2y_1}.
\]
Consider a matrix $\overleftarrow{L}=(\overleftarrow{L}_{xx'})_{x,x'\in X}$
defined by $\overleftarrow{L}_{xx'}=\bl_{x'}L_{x'x}/\bl_x$ for all $x,x'\in 
X$ (note that the vector $\bl$ is positive).  The matrix 
$\overleftarrow{L}$ is stochastic.  Indeed, 
$\sum_{x'}\overleftarrow{L}_{xx'}= \sum_{x'}\bl_{x'}L_{x'x}/\bl_x 
=\bl_x/\bl_x=1$ for all $x\in X$.  Also, $\bl$ is a stationary probability 
vector of $\overleftarrow{L}$ since $\sum_x \bl_x\overleftarrow{L}_{xx'}= 
\sum_x \bl_{x'}L_{x'x}=\bl_{x'}$ for all $x'\in X$.  Now for any 
$y_1,y_2,\dots,y_m\in X$ we obtain
\begin{align*}
\bl_{y_1}\overleftarrow{L}_{y_1y_2}\overleftarrow{L}_{y_2y_3}\ldots 
\overleftarrow{L}_{y_{m-1}y_m}
&=\bl_{y_1}(\bl_{y_2}L_{y_2y_1}/\bl_{y_1})(\bl_{y_3}L_{y_3y_2}/\bl_{y_2})
\ldots (\bl_{y_m}L_{y_my_{m-1}}/\bl_{y_{m-1}})\\
&=\bl_{y_m}L_{y_my_{m-1}}\ldots L_{y_3y_2}L_{y_2y_1},
\end{align*}
which implies that $\mu^-$ is the Markov measure defined by the matrix 
$\overleftarrow{L}$ with stationary probability vector $\bl$.

By construction, $\overleftarrow{L}_{xx'}>0$ if and only if $L_{x'x}>0$.  
Since the stochastic matrix $L$ is irreducible, it follows that 
$\overleftarrow{L}$ is irreducible as well.  Next let us show that the 
reverse automaton $\overleftarrow{\cA}$ is $\overleftarrow{L}$-strongly 
connected.  Given states $s,s'\in S$ and symbols $x,x'\in X$, we need to 
find symbols $x_0=x,x_1,\dots,x_m=x'$ ($m\ge1$) such that 
$\overleftarrow{\pi}(s,x_0x_1\ldots x_{m-1})=s'$ and 
$\overleftarrow{L}_{x_ix_{i+1}}>0$ for $0\le i\le m-1$.  Let 
$r=\overleftarrow{\pi}(s,x)$ and $r'=\overleftarrow{\pi}(s',x')$.  Since 
the automaton $\cA$ is $L$-strongly connected, there exist symbols 
$y_0=x',y_1,\dots,y_j=x$ ($j\ge1$) such that $\pi(r',y_0y_1\ldots 
y_{j-1})=r$ and $L_{y_iy_{i+1}}>0$ for $0\le i\le j-1$.  Then 
$s=\pi(s',y_1y_2\ldots y_j)$ so that $s'=\overleftarrow{\pi}(s,y_j\ldots 
y_2y_1)$.  Moreover, $\overleftarrow{L}_{y_iy_{i-1}}>0$ for $1\le i\le j$.

Applying Theorem \ref{mainthm-rev} to the measure $\mu^-$, the matrix 
$\overleftarrow{L}$, the transformation $g^-$, the automaton 
$\overleftarrow{\cA}$ and the word $\overleftarrow{u}=u_ku_{k-1}\ldots u_1$ 
(which is $u$ written backwards), we obtain that for $\mu^-$-almost all 
$\om\in X^\nn$,
\[
\lim_{n\to\infty}\frac{1}{n}\sum_{i=0}^{n-1} 
\chi_{\overleftarrow{u}X^\nn}(\sigma^ig^-(\om)) =
\frac1{N}\sum_{\overleftarrow{\cA}:\,s_0 \arr{x_0}{u_k} s_1 \ldots 
\arr{x_{k-1}}{u_1} s_k} \bl_{x_0}\overleftarrow{L}_{x_0x_1}\ldots 
\overleftarrow{L}_{x_{k-2}x_{k-1}},
\]
where the sum is over paths in the Moore diagram of $\overleftarrow{\cA}$.  
In other words, if $\om=F^-(w)$ then the latter equality holds for 
$\mu$-almost all $w\in X^\zz$.  By construction of the automaton 
$\overleftarrow{\cA}$, its Moore diagram admits a path \,$s_0 
\arr{x_0}{u_k} s_1 \ldots \arr{x_{k-1}}{u_1} s_k$\, if and only if the 
Moore diagram of $\cA$ admits the path \,$s_k \arr{x_{k-1}}{u_1} \ldots s_1 
\arr{x_0}{u_k} s_0$.  By the above, $\bl_{x_0}\overleftarrow{L}_{x_0x_1} 
\ldots \overleftarrow{L}_{x_{k-2}x_{k-1}}=\bl_{x_{k-1}}L_{x_{k-1}x_{k-2}} 
\ldots L_{x_1x_0}$ for all $x_0,x_1,\dots,x_{k-1}\in X$.  It follows that 
the right-hand side in the last formula (that is, the value of the limit) 
is the same as in the formulation of the theorem.  As for the left-hand 
side, we have $\sigma^ig^-(F^-(w))=\sigma^iF^-(g(w))=F^-(\sigma^{-i}g(w))$ 
for all $w\in X^\zz$ and $i\ge0$.  Besides, 
$\chi_{\Omega_u}(\si^{-i}(\tilde w))= 
\chi_{\overleftarrow{u}X^\nn}(F^-(\si^{-i+k}(\tilde w)))$ for all $\tilde 
w\in X^\zz$ and $i\in\zz$ (here $k$ is the length of the word $u$).  It 
follows that
\[
\lim_{n\to\infty}\frac{1}{n}\sum_{i=0}^{n-1} 
\chi_{\Omega_u}(\sigma^{-i}g(w))
=\lim_{n\to\infty}\frac{1}{n}\sum_{i=0}^{n-1} 
\chi_{\overleftarrow{u}X^\nn}(\sigma^ig^-(F^-(w)))
\]
whenever the latter limit exists.  This completes the proof.
\end{proof}

\newpage
\section{Examples with strongly connected automata}\label{examples}

In this section we consider several examples of automaton transformations 
generated by strongly connected automata and perform for them calculations 
related to results of Sections \ref{strong} and \ref{reversible}.

In each example, an automaton $\cA=(X,S,\pi,\la)$ is given by its Moore 
diagram.  A shift-invariant, ergodic Markov measure $\mu$ on $X^\nn$ is 
defined by an irreducible stochastic matrix $L$ with stationary probability 
vector $\bl$ ($\bl L=\bl$ and $\sum_x \bl_x=1$).  In most examples, the 
alphabet is $X=\{0,1\}$ and the matrix $L$ is in general form
\[
L=\(
\begin{array}{cc}
1-p & p \\
q & 1-q \\
\end{array}
\),
\]
that is, $p>0$ is the probability of transition from $0$ to $1$ and $q>0$ 
is the probability of transition from $1$ to $0$.  Then
\[
\bl=\left(\frac{q}{p+q},\,\frac{p}{p+q}\right).
\]
In all examples, we compute the matrix $T=T_{L,\cA}$ defined in 
\eqref{eq:T} and find its stationary probability vector $\bt$.  Rows and 
columns of $T$ as well as coordinates of $\bt$ are indexed by elements of 
$S\times X$.  The sets $S$ and $X$ are canonically ordered as their 
elements are either letters or digits.  We impose the lexicographic order 
on the set $S\times X$, which allows us to write $T$ as a usual matrix and 
$\bt$ as a usual row vector.

In addition, we compute the matrix $K$ defined in \eqref{eq:K} and find its 
stationary probability vector $\bk$ to check if the vector $\bt$ decomposes 
as the tensor product $\bk\otimes\bl$.

Finally, we calculate the vector $\bbf=(\bbf_x)_{x\in X}$ of frequencies of 
each character $x\in X$ after the action of $\cA$ with an initial state $g$ 
on a $\mu$-generic sequence:
\[
\bbf_x = \lim_{n\to\infty} \frac{1}{n} \sum_{i=0}^{n-1} \chi_{xX^\nn} 
(\sigma^i g(\om))
\]
for $\mu$-almost all $\om\in X^\nn$, where $\sigma$ denotes the shift.  If 
the automaton is $L$-strongly connected, then the vector $\bbf$ of output 
frequencies does not depend on the initial state $g$ (as easily follows 
from Theorem \ref{mainthm}).

\newpage
\subsection{Automaton generating a free group}\label{Alesh}

The three states of this automaton generate a free nonabelian group.  
Moreover, this is essentially the only $3$-state automaton over the 
alphabet $X=\{0,1\}$ with that property (see \cite{auto-class}).  What is 
more important for us is that the automaton is reversible (in fact, 
bireversible: its inverse is reversible as well), and hence 
$\bt=\bk\otimes\bl$.

\centerline{
\includegraphics[trim=0 0.4in 0 0.35in,scale=1]{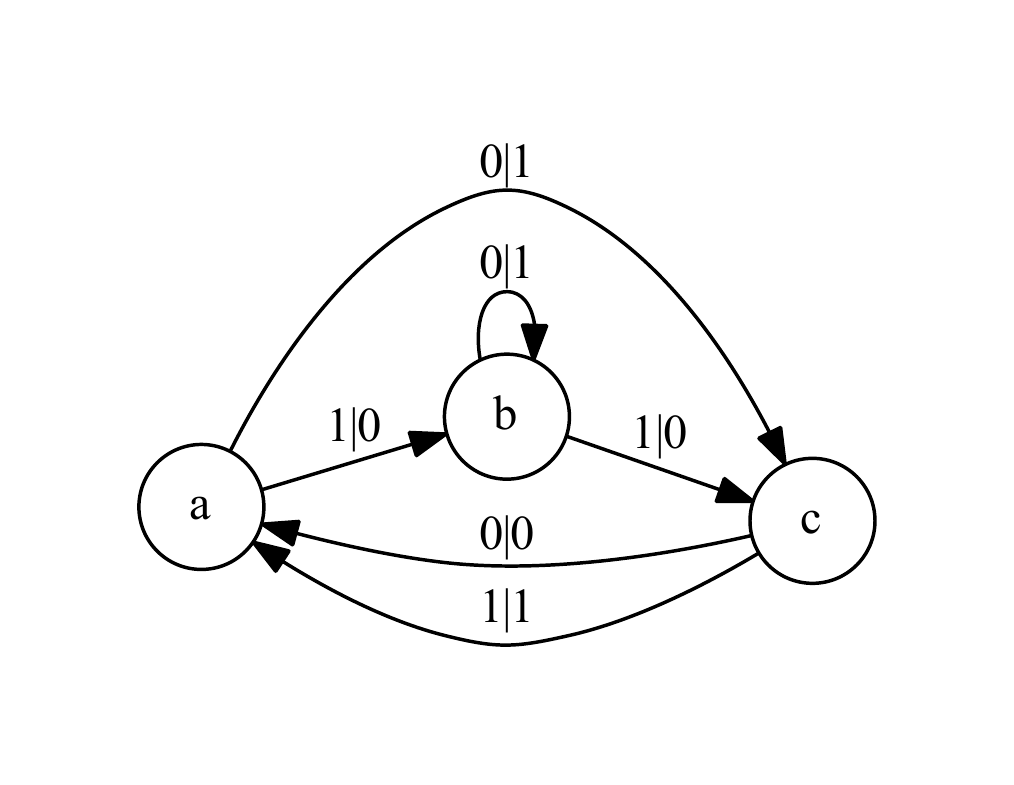}
}

\begin{align*}
 K&=\left(
\begin{array}{ccc}
 0 & \frac{p}{p+q} & \frac{q}{p+q} \\
 0 & \frac{q}{p+q} & \frac{p}{p+q} \\
 1 & 0 & 0 \\
\end{array}
\right) \\
 \bk&=\left(
\begin{array}{ccc}
 \frac{1}{3} & \frac{1}{3} & \frac{1}{3} \\
\end{array}
\right) \\
 T&=\left(
\begin{array}{cccccc}
 0 & 0 & 0 & 0 & 1-p & p \\
 0 & 0 & q & 1-q & 0 & 0 \\
 0 & 0 & 1-p & p & 0 & 0 \\
 0 & 0 & 0 & 0 & q & 1-q \\
 1-p & p & 0 & 0 & 0 & 0 \\
 q & 1-q & 0 & 0 & 0 & 0 \\
\end{array}
\right) \\
 \bt&=\left(
\begin{array}{cccccc}
 \frac{q}{3 (p+q)} & \frac{p}{3 (p+q)} & \frac{q}{3 (p+q)} & \frac{p}{3 (p+q)} & \frac{q}{3 (p+q)} & \frac{p}{3 (p+q)} \\
\end{array}
\right) \\
 \bk\otimes\bl&=\left(
\begin{array}{cccccc}
 \frac{q}{3 (p+q)} & \frac{p}{3 (p+q)} & \frac{q}{3 (p+q)} & \frac{p}{3 (p+q)} & \frac{q}{3 (p+q)} & \frac{p}{3 (p+q)} \\
\end{array}
\right) \\
 \bbf&=\left(
\begin{array}{cc}
 \frac{2 p+q}{3 (p+q)} & \frac{p+2 q}{3 (p+q)} \\
\end{array}
\right)
\end{align*}

\subsection{The Bellaterra automaton}

The Bellaterra automaton is obtained by composing the automaton from the 
previous example with a one-state automaton that switches $0$ and $1$.  In 
other words, all values of the transition function are retained while all 
values of the output function are switched.  This significantly changes the 
character of transformations generated by the automaton: they are all 
involutions now (see \cite{auto-class}).  However the action on Markov 
measures is not that much different: the coordinates of the vector $\bbf$ 
are interchanged while the other data remain the same.

\centerline{
\includegraphics[trim=0 0.4in 0 
0.4in,scale=1]{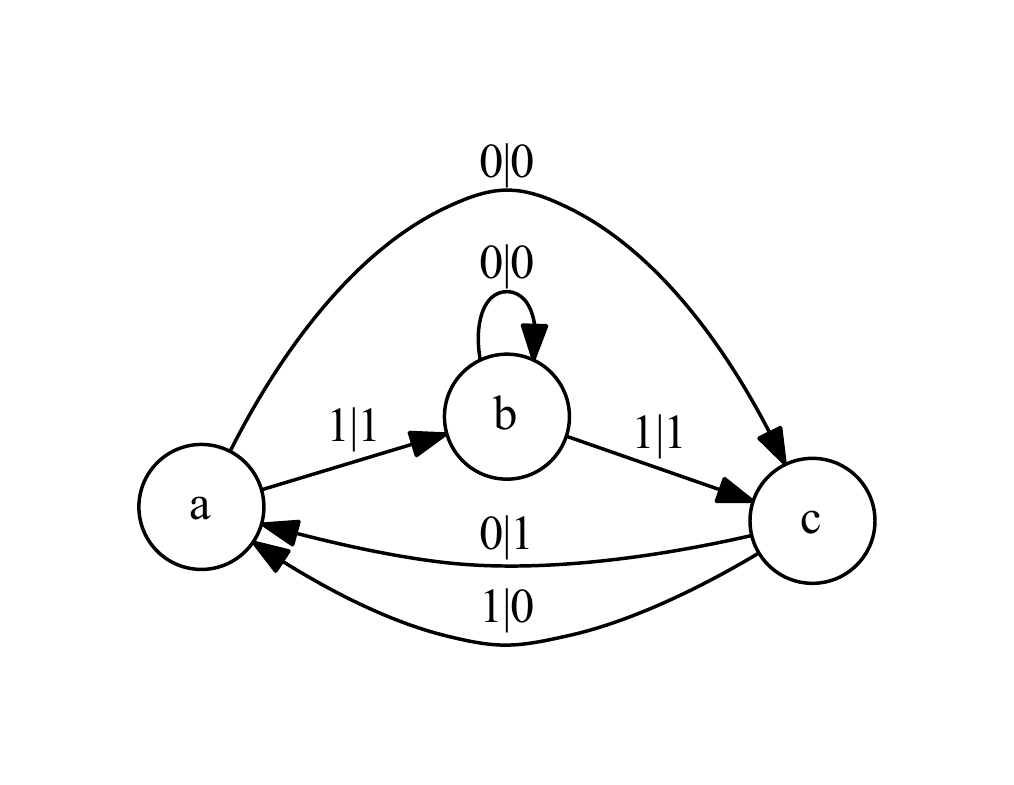}
}

\begin{align*}
 K&=\left(
\begin{array}{ccc}
 0 & \frac{p}{p+q} & \frac{q}{p+q} \\
 0 & \frac{q}{p+q} & \frac{p}{p+q} \\
 1 & 0 & 0 \\
\end{array}
\right) \\
 \bk&=\left(
\begin{array}{ccc}
 \frac{1}{3} & \frac{1}{3} & \frac{1}{3} \\
\end{array}
\right) \\
 T&=\left(
\begin{array}{cccccc}
 0 & 0 & 0 & 0 & 1-p & p \\
 0 & 0 & q & 1-q & 0 & 0 \\
 0 & 0 & 1-p & p & 0 & 0 \\
 0 & 0 & 0 & 0 & q & 1-q \\
 1-p & p & 0 & 0 & 0 & 0 \\
 q & 1-q & 0 & 0 & 0 & 0 \\
\end{array}
\right) \\
 \bt&=\left(
\begin{array}{cccccc}
 \frac{q}{3 (p+q)} & \frac{p}{3 (p+q)} & \frac{q}{3 (p+q)} & \frac{p}{3 
 (p+q)} & \frac{q}{3 (p+q)} & \frac{p}{3 (p+q)} \\
\end{array}
\right) \\
 \bk\otimes\bl&=\left(
\begin{array}{cccccc}
 \frac{q}{3 (p+q)} & \frac{p}{3 (p+q)} & \frac{q}{3 (p+q)} & \frac{p}{3 
 (p+q)} & \frac{q}{3 (p+q)} & \frac{p}{3 (p+q)} \\
\end{array}
\right) \\
 \bbf&=\left(
\begin{array}{cc}
 \frac{p+2 q}{3 (p+q)} & \frac{2 p+q}{3 (p+q)} \\
\end{array}
\right)
\end{align*}

\subsection{The lamplighter automaton}

The two states of this automaton generate a group isomorphic to the 
lamplighter group $(\zz/2\zz)\wr\zz$ (see \cite{auto}).  This is again a 
reversible automaton.  An interesting feature of the lamplighter automaton 
is that the output frequencies of individual characters do not depend on 
the input frequencies.

\centerline{
\includegraphics[trim=0 0.4in 0 
0.4in,scale=1.2]{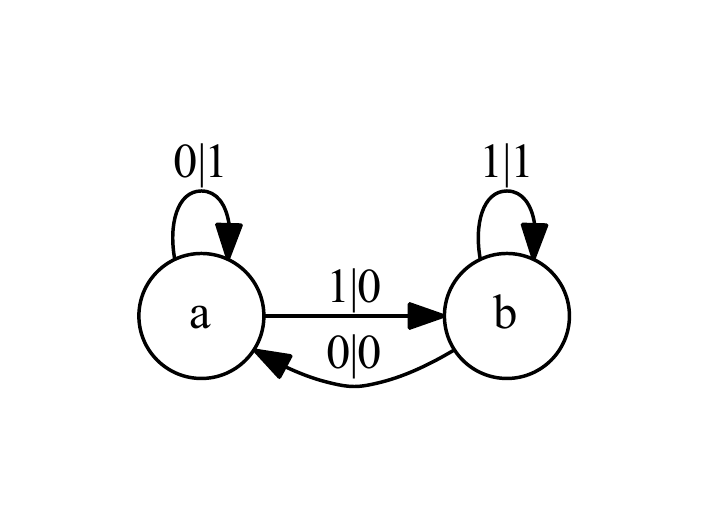}
}

\begin{align*}
 K&=\left(
\begin{array}{cc}
 \frac{q}{p+q} & \frac{p}{p+q} \\
 \frac{p}{p+q} & \frac{q}{p+q} \\
\end{array}
\right) \\
 \bk&=\left(
\begin{array}{cc}
 \frac{1}{2} & \frac{1}{2} \\
\end{array}
\right) \\
 T&=\left(
\begin{array}{cccc}
 1-p & p & 0 & 0 \\
 0 & 0 & q & 1-q \\
 0 & 0 & 1-p & p \\
 q & 1-q & 0 & 0 \\
\end{array}
\right) \\
 \bt&=\left(
\begin{array}{cccc}
 \frac{q}{2 (p+q)} & \frac{p}{2 (p+q)} & \frac{q}{2 (p+q)} & \frac{p}{2 (p+q)} \\
\end{array}
\right) \\
 \bk\otimes\bl&=\left(
\begin{array}{cccc}
 \frac{q}{2 (p+q)} & \frac{p}{2 (p+q)} & \frac{q}{2 (p+q)} & \frac{p}{2 (p+q)} \\
\end{array}
\right) \\
 \bbf&=\left(
\begin{array}{cc}
 \frac{1}{2} & \frac{1}{2} \\
\end{array}
\right) \\
\end{align*}

Note: even though the output frequencies of individual characters do not 
depend on $p$ and $q$, this is not the case for words of length $2$.  The 
input and output frequencies are as follows.

\smallskip

\[
\begin{array}{ccccc}
\mbox{Words:}& 00& 01& 10& 11\\[1mm]
\mbox{Input frequency: }& \frac{q-p q}{p+q} & \frac{p q}{p+q} & \frac{p 
q}{p+q} & \frac{p-p q}{p+q} \\[1.5mm]
\mbox{Output frequency: }& \frac{q}{2 (p+q)} & \frac{p}{2 (p+q)} & 
\frac{p}{2 (p+q)} & \frac{q}{2 (p+q)} \\
\end{array}
\]

\subsection{Case when $\bt\neq \bk\otimes\bl$}\label{non-tens}

This can already happen with a two-character alphabet.  The automaton in 
this example differs from the automaton in Example \ref{Alesh} only by one 
arrow (in the Moore diagram), but that change makes the automaton 
non-reversible.

\centerline{
\includegraphics[trim=0 0.4in 0 0.4in,scale=1]{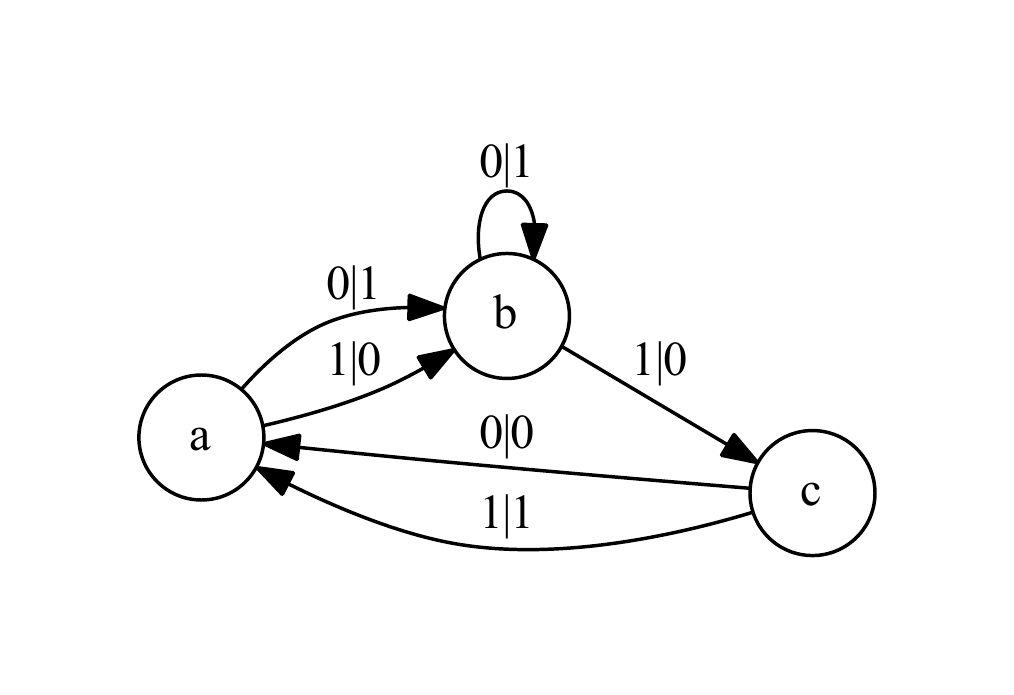}
}

\begin{align*}
 K&=\left(
\begin{array}{ccc}
 0 & 1 & 0 \\
 0 & \frac{q}{p+q} & \frac{p}{p+q} \\
 1 & 0 & 0 \\
\end{array}
\right) \\
 \bk&=\left(
\begin{array}{ccc}
 \frac{p}{3 p+q} & \frac{p+q}{3 p+q} & \frac{p}{3 p+q} \\
\end{array}
\right) \\
 T&=\left(
\begin{array}{cccccc}
 0 & 0 & 1-p & p & 0 & 0 \\
 0 & 0 & q & 1-q & 0 & 0 \\
 0 & 0 & 1-p & p & 0 & 0 \\
 0 & 0 & 0 & 0 & q & 1-q \\
 1-p & p & 0 & 0 & 0 & 0 \\
 q & 1-q & 0 & 0 & 0 & 0 \\
\end{array}
\right) \\
 \bt&=\left(
\begin{array}{ccc}
 -\frac{p q (p+q-2)}{q p^2+\left(2 q^2-3 q+3\right) p+\left(q^2-3 q+3\right) q} &
 \frac{p \left(q^2+(p-2) q+1\right)}{q p^2+\left(2 q^2-3 q+3\right) p+\left(q^2-3 q+3\right) q} &
  \frac{q \left(p^2+(2 q-3) p+q^2-3 q+3\right)}{q p^2+\left(2 q^2-3 q+3\right) p+\left(q^2-3 q+3\right) q}
  \end{array} \right.\\
  &\left.\begin{array}{ccc}
   \frac{p}{q p^2+\left(2 q^2-3 q+3\right) p+\left(q^2-3 q+3\right) q} & \frac{p q}{q p^2+\left(2 q^2-3 q+3\right) p+\left(q^2-3 q+3\right) q} & \frac{p-p q}{q p^2+\left(2 q^2-3 q+3\right) p+\left(q^2-3 q+3\right) q} \\
\end{array}
\right) \\
 \bk\otimes\bl&=\left(
\begin{array}{cccccc}
 \frac{p q}{(p+q) (3 p+q)} & \frac{p^2}{(p+q) (3 p+q)} & \frac{q}{3 p+q} & \frac{p}{3 p+q} & \frac{p q}{(p+q) (3 p+q)} & \frac{p^2}{(p+q) (3 p+q)} \\
\end{array}
\right) \\
 \bbf&=\left(
\begin{array}{cc}
 \frac{p \left(q^2+(p-1) q+2\right)}{q p^2+\left(2 q^2-3 q+3\right) p+\left(q^2-3 q+3\right) q} & \frac{p (q-1)^2+\left(q^2-3 q+3\right) q}{q p^2+\left(2 q^2-3 q+3\right) p+\left(q^2-3 q+3\right) q} \\
\end{array}
\right) \\
\end{align*}

\subsection{Automaton over a three-character alphabet}

In our final example, we consider an automaton $\cA$ over a three-character 
alphabet $X=\{1,2,3\}$.

\centerline{
\includegraphics[trim=0.6in 0 0 0,scale=0.85]{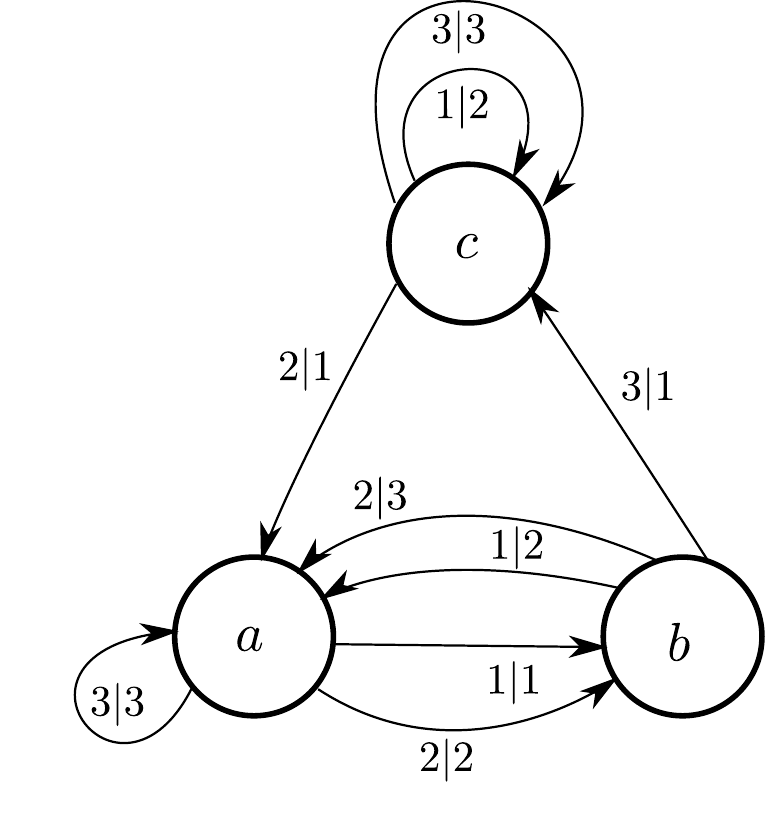}
}

Let $\mu$ be the Markov measure on $X^\nn$ defined by the matrix
\[
L=\left(
\begin{array}{ccc}
 \frac{1}{2} & \frac{1}{2} & 0 \\
 0 & \frac{1}{2} & \frac{1}{2} \\
 \frac{1}{2} & 0 & \frac{1}{2} \\
\end{array}
\right)\
\]
with stationary probability vector $\bl=(1/3,1/3,1/3)$.  Then $\cA$ is 
$L$-strongly connected and we have
\begin{align*}
T&=\left(
\begin{array}{ccccccccc}
 0 & 0 & 0 & \frac{1}{2} & \frac{1}{2} & 0 & 0 & 0 & 0 \\
 0 & 0 & 0 & 0 & \frac{1}{2} & \frac{1}{2} & 0 & 0 & 0 \\
 \frac{1}{2} & 0 & \frac{1}{2} & 0 & 0 & 0 & 0 & 0 & 0 \\
 \frac{1}{2} & \frac{1}{2} & 0 & 0 & 0 & 0 & 0 & 0 & 0 \\
 0 & \frac{1}{2} & \frac{1}{2} & 0 & 0 & 0 & 0 & 0 & 0 \\
 0 & 0 & 0 & 0 & 0 & 0 & \frac{1}{2} & 0 & \frac{1}{2} \\
 0 & 0 & 0 & 0 & 0 & 0 & \frac{1}{2} & \frac{1}{2} & 0 \\
 0 & \frac{1}{2} & \frac{1}{2} & 0 & 0 & 0 & 0 & 0 & 0 \\
 0 & 0 & 0 & 0 & 0 & 0 & \frac{1}{2} & 0 & \frac{1}{2} \\
\end{array}
\right)\\
\bt&=\left(
\begin{array}{ccccccccc}
 \frac{2}{15} & \frac{2}{15} & \frac{1}{5} & \frac{1}{15} & \frac{2}{15} & 
 \frac{1}{15} & \frac{2}{15} & \frac{1}{15} & \frac{1}{15} \\
\end{array}
\right)\\
\bbf &=
\left(
\begin{array}{ccc}
 \frac{4}{15} & \frac{1}{3} & \frac{2}{5}\\
\end{array}
\right)
\end{align*}
We did not include the matrix $K$ and the vector $\bk$ in this example, but 
it is easy to see that $\bt\neq \boldsymbol{v} \otimes\bl$ for any vector 
$\boldsymbol{v}$.

Note that if we modified the automaton $\cA$ by changing one arrow in the 
Moore diagram so that $\pi(a,2) = a$ (instead of $b$), then $\cA$ would no 
longer be $L$-strongly-connected.  Indeed, since $L_{1,3}=\mu(13X^\nn)=0$, 
there would be zero chance to get from $(a,1)$ and $(b,3)$ in the Markov 
chain defined by $T$.  As a result, the vector $\bt$ is no longer positive:
\[
\bt=
\left(
\begin{array}{ccccccccc}
 \frac{2}{9} & \frac{2}{9} & \frac{1}{3} & \frac{1}{9} & \frac{1}{9} & 0 & 
 0 & 0 & 0 \\
\end{array}
\right).
\]
This modification shows that in the case of Markov measures, $\bt$ may not 
be uniquely determined by the automaton and the vector $\bl$, like in the 
case of Bernoulli measures.  Indeed, for a Bernoulli measure with the same 
probability distribution $\bl$, the vector $\bt$ would be positive.

\section{Singularity}\label{singular}

Suppose $\mu$ is a Markov measure on $X^\nn$ and $g$ is an automaton 
transformation of $X^\nn$.  If $g$ has polynomial activity growth, the 
results of Section \ref{poly} suggest that we should expect the pushforward 
measure $g_*\mu$ to be absolutely continuous with respect to $\mu$.  In 
this section we study the relation between the measures $\mu$ and $g_*\mu$ 
in the case when $g$ is generated by a strongly connected automaton.  The 
relation turns out to be quite different, namely, we should expect $\mu$ 
and $g_*\mu$ to be singular (that is, concentrated on disjoint sets).

Kravchenko observed in \cite{Krav} that if $\mu$ is a Bernoulli measure and 
the transformation $g$ generated by a strongly connected automaton is 
invertible, then $\mu$ and $g_*\mu$ are singular except for a few cases, in 
which $g_*\mu=\mu$.  We are going to correct his result fixing a minor 
error in the argument, and then further extend it.

One obvious exception is when $g$ acts trivially.  The second exception is 
when $\mu$ is the uniform Bernoulli measure (defined by a constant 
probability vector).  Such a measure is preserved by any invertible 
automaton transformation.  Unfortunately, another exceptional case (that 
kind of combines the said two) was overlooked in \cite{Krav}.

\begin{example}
Let $X=\{1,2,3\}$ and $\mu$ be a Bernoulli measure on $X^\nn$ defined by a 
probability vector $\bl=(1/2,1/4,1/4)$.  Let $\cA=(X,\{g\},\pi,\la)$, where 
$\pi(g,x) = g$ for all $x\in X$, $\lambda(g,1)=1$, $\lambda(g,2)=3$ and 
$\lambda(g,3)=2$.  The only state $g$ of the automaton $\cA$ acts on $X^\nn$
as a $1$-block factor map that applies the transposition $(2\,3)$ to every 
term of a sequence.  Since $\bl_2=\bl_3$, we have $g_*\mu=\mu$ even though 
$g$ does not act trivially and the measure $\mu$ is not uniform. \tri
\end{example}

\begin{lemma}\label{sing-exceptB}
Let $\mu$ be a Bernoulli measure on $X^\nn$ defined by a positive 
probability vector $\bl$ and $g:X^\nn\to X^\nn$ be an invertible 
transformation generated by a strongly connected automaton 
$\cA=(X,S,\pi,\la)$.  Then the following conditions are equivalent: (i) 
$g_*\mu=\mu$; (ii) $\bl_{x'}=\bl_x$ whenever $\la(s,x)=x'$ for some $s\in 
S$.
\end{lemma}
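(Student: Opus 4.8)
The plan is to reduce both implications to a computation of $g_*\mu$ on cylinders, exploiting that an invertible strongly connected automaton acts by permutations at every state. First I would record the key structural fact: since $g$ is invertible and $\cA$ is strongly connected, every state $s\in S$ acts on $X$ by a bijection $\la(s,\cdot)$ (as noted in the discussion of invertible automata). Consequently, for each finite output word $y_1\dots y_n$ there is a unique input word $x_1\dots x_n$ with $g(x_1\dots x_n)=y_1\dots y_n$, read off the unique path $g\arr{x_1}{y_1}s_1\arr{x_2}{y_2}\dots\arr{x_n}{y_n}s_n$ in which each $x_i$ is the unique $\la(s_{i-1},\cdot)$-preimage of $y_i$ (with $s_0=g$). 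Hence $g^{-1}(y_1\dots y_nX^\nn)=x_1\dots x_nX^\nn$, and since $\mu$ is Bernoulli,
\[
g_*\mu(y_1\dots y_nX^\nn)=\bl_{x_1}\bl_{x_2}\cdots\bl_{x_n},\qquad \mu(y_1\dots y_nX^\nn)=\bl_{y_1}\bl_{y_2}\cdots\bl_{y_n}.
\]
By Kolmogorov's theorem it then suffices to compare these two quantities over all cylinders.

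For (ii)$\Rightarrow$(i) the argument is immediate: each edge $s_{i-1}\arr{x_i}{y_i}s_i$ has $\la(s_{i-1},x_i)=y_i$, so condition (ii) gives $\bl_{x_i}=\bl_{y_i}$; multiplying over $i$ yields $g_*\mu=\mu$ on every cylinder, hence everywhere.

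The converse (i)$\Rightarrow$(ii) is the harder direction. Condition (i) only supplies the product identity $\bl_{x_1}\cdots\bl_{x_n}=\bl_{y_1}\cdots\bl_{y_n}$ along each path, whereas I want the termwise equalities. I would prove by induction on $n$ that along any path $g\arr{x_1}{y_1}s_1\dots\arr{x_n}{y_n}s_n$ one has $\bl_{x_i}=\bl_{y_i}$ for every $i\le n$: the base case $n=1$ is the product identity itself, and in the inductive step I would apply the hypothesis to the length-$(n-1)$ prefix and cancel the common factors from the length-$n$ product identity, which is legitimate because $\bl$ is positive. To conclude (ii), given $\la(s,x)=x'$ I would invoke strong connectivity to obtain a path from the initial state $g$ to $s$, extend it by the single edge $s\arr{x}{x'}\pi(s,x)$, and read off $\bl_x=\bl_{x'}$ from the termwise claim applied to this last edge.

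The main obstacle is conceptual rather than computational: condition (ii) quantifies over all states $s\in S$, while $g_*\mu$ only directly exposes paths emanating from the single initial state $g$. Bridging this gap is precisely where strong connectivity is used, and the positivity of $\bl$ is what allows the passage from the product identity to the individual identities $\bl_x=\bl_{x'}$.
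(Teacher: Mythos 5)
Your proposal is correct and follows essentially the same route as the paper: the forward direction is the identical termwise comparison along the unique path from $g$, and your inductive step for the converse (cancel the length-$(n-1)$ product identity against the length-$n$ one, using positivity of $\bl$) is exactly the paper's comparison of the nested cylinders $uX^\nn$ and $uxX^\nn$ after reaching $s$ from $g$ by strong connectivity. No gaps.
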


\begin{proof}
The action of $g$ on $X^*$ is invertible as well.  We use $g^{-1}$ to 
denote the inverse of both the action of $g$ on $X^\nn$ and on $X^*$.  
Consider an arbitrary word $u\in X^*$ of length $k\ge1$.  We have 
$u=x_1x_2\ldots x_k$ and $g^{-1}(u)=y_1y_2\ldots y_k$ for some $x_i,y_i\in 
X$, $1\le i\le k$.  Then $\mu(uX^\nn)=\bl_{x_1}\bl_{x_2}\dots\bl_{x_k}$ and 
$g_*\mu(uX^\nn)=\mu(g^{-1}(uX^\nn))=\mu\bigl(g^{-1}(u)X^\nn\bigr)=
\bl_{y_1}\bl_{y_2}\dots\bl_{y_k}$.  Note that $x_i=\la(s_i,y_i)$, where 
$s_1=g$ and $s_i=\pi(g,y_1y_2\ldots y_{i-1})$ for $2\le i\le k$.  Assuming 
the condition (ii) holds, we obtain that $\bl_{x_i}=\bl_{y_i}$ for $1\le 
i\le k$.  Then $\mu(uX^\nn)=g_*\mu(uX^\nn)$.  Thus the measures $\mu$ and 
$g_*\mu$ coincide on the cylinders, which implies that $g_*\mu=\mu$.

Conversely, assume that $g_*\mu=\mu$ and suppose $\la(s,x)=x'$ for some 
$s\in S$.  Since the automaton $\cA$ is strongly connected, there exists a 
word $u\in X^*$ such that $\pi(g,u)=s$.  Let $u'=g(u)$.  Then 
$g(ux)=u'x'$.  As a consequence, $\mu(uX^\nn)=g_*\mu(u'X^\nn)=\mu(u'X^\nn)$ 
and $\mu(uxX^\nn)=g_*\mu(u'x'X^\nn)=\mu(u'x'X^\nn)$.  By definition of the 
measure $\mu$, we have $\mu(uxX^\nn)=\mu(uX^\nn)\bl_x$ and 
$\mu(u'x'X^\nn)=\mu(u'X^\nn)\bl_{x'}$.  Since $\bl$ is a positive vector, 
the measure $\mu(uX^\nn)=\mu(u'X^\nn)$ is not zero.  It follows that 
$\bl_{x'}=\bl_x$.
\end{proof}

The main idea behind the proof of singularity is rather simple.  Recall 
that the asymptotic frequency $\freq_\om(u)$ with which a finite word $u\in 
X^*$ occurs in an infinite sequence $\om\in X^\nn$ is defined as a limit
\[
\freq_\om(u)= \lim_{n\to\infty} \frac1n \sum_{i=0}^{n-1} 
\chi_{uX^\nn}(\si^i(\om))
\]
(it is not defined if the limit does not exist).

\begin{lemma}\label{sing-2freq}
Let $\mu$ be a Borel probability measure on $X^\nn$ that is invariant and 
ergodic with respect to the shift.  Let $g:X^\nn\to X^\nn$ be a Borel 
measurable map.  Suppose that $\freq_{g(\om)}(u)\ne\freq_\om(u)$ for some 
$u\in X^*$ and $\mu$-almost all $\om\in X^\nn$.  Then the measures $\mu$ 
and $g_*\mu$ are singular.
\end{lemma}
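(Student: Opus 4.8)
The plan is to let the value of the asymptotic frequency of $u$ serve as an invariant that separates $\mu$ from $g_*\mu$: under $\mu$ almost every sequence realizes one particular value of this frequency, whereas under $g_*\mu$ almost every sequence realizes a different value, so the two measures are carried by disjoint sets.

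First I would apply the Birkhoff ergodic theorem to the bounded measurable function $\chi_{uX^\nn}$. Since $\mu$ is shift-invariant and ergodic, the theorem yields that for $\mu$-almost all $\om$ the limit $\freq_\om(u)$ exists and equals the constant $a\defeq\mu(uX^\nn)$. I would then define
\[
A=\{\om\in X^\nn:\ \freq_\om(u)\text{ is defined and equals }a\}.
\]
Because $\freq_\om(u)$ is a pointwise limit of the measurable functions $\om\mapsto\frac1n\sum_{i=0}^{n-1}\chi_{uX^\nn}(\si^i(\om))$, the set $A$ is Borel, and the ergodic theorem gives $\mu(A)=1$.

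Next I would evaluate $g_*\mu$ on $A$. By the definition of the pushforward, $g_*\mu(A)=\mu(g^{-1}(A))=\mu(\{\om:\freq_{g(\om)}(u)=a\})$. The hypothesis states that $\freq_{g(\om)}(u)\ne\freq_\om(u)$ for $\mu$-almost all $\om$, and the previous step gives $\freq_\om(u)=a$ for $\mu$-almost all $\om$; intersecting these two full-measure sets shows that $\freq_{g(\om)}(u)\ne a$ for $\mu$-almost all $\om$. Hence $\{\om:\freq_{g(\om)}(u)=a\}$ is $\mu$-null, so $g_*\mu(A)=0$.

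This leaves $\mu(A)=1$ while $g_*\mu(A)=0$, i.e. $\mu$ is concentrated on $A$ and $g_*\mu$ is concentrated on the complement $X^\nn\setminus A$, which is exactly the assertion that $\mu$ and $g_*\mu$ are singular. There is no serious analytic obstacle here, since the ergodic theorem does all the heavy lifting; the only points deserving care are the Borel measurability of $A$ and the clean intersection of the two almost-everywhere statements, both of which are addressed above.
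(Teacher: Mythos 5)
Your proof is correct and is essentially the paper's argument: both use the Birkhoff ergodic theorem to identify a full-measure set $A$ (the paper's $E_1$) on which $\freq_\om(u)=\mu(uX^\nn)$, and then use the hypothesis to conclude that $g_*\mu$ gives $A$ measure zero. Your phrasing via the preimage $g^{-1}(A)$ rather than the image $g(E_1\cap E_2)$ is a cosmetic difference only.
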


\begin{proof}
Let $E_1$ be the set of all sequences $\om\in X^\nn$ such that 
$\freq_\om(u)=\mu(uX^\nn)$.  Let $E_2$ be the set of all $\om\in X^\nn$ 
such that $\freq_{g(\om)}(u)\ne\freq_\om(u)$.  Both $E_1$ and $E_2$ are 
Borel measurable sets.  We have $\mu(E_2)=1$ by assumption and $\mu(E_1)=1$ 
due to the Birkhoff ergodic theorem.  As a consequence, $\mu(E_1\cap E_2) 
=1$.  The image $g(E_1\cap E_2)$ is clearly disjoint from $E_1$.  It 
follows that $g_*\mu(X^\nn\setminus E_1)\ge\mu(E_1\cap E_2)=1$.  Hence 
$E_1$ is a set of full measure for $\mu$ while $X^\nn\setminus E_1$ is a 
set of full measure for $g_*\mu$.  Thus $\mu$ and $g_*\mu$ are singular 
measures.
\end{proof}

The next lemma is crucial for this section.

\begin{lemma}\label{sing-tensor}
Let $\mu$ be a Markov measure on $X^\nn$ defined by an irreducible 
stochastic matrix $L$ with stationary probability vector $\bl$ and 
$g:X^\nn\to X^\nn$ be an invertible transformation generated by an 
$L$-strongly connected automaton $\cA$.  Suppose that $\bt=\bk\otimes\bl$, 
where $\bt$ is the stationary probability vector of the matrix $T_{L,\cA}$ 
defined in \eqref{eq:T} and $\bk$ is the stationary probability vector of 
the matrix $K_{\bl,\cA}$ defined in \eqref{eq:K}.  Then the measures $\mu$ 
and $g_*\mu$ are either singular or the same.
\end{lemma}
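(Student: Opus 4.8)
The plan is to play the input and output word-frequencies against each other and invoke Lemma~\ref{sing-2freq}. Fix a nonempty word $u\in X^*$. Since $\mu$ is ergodic ($L$ being irreducible), Birkhoff's theorem gives $\freq_\om(u)=\mu(uX^\nn)$ for $\mu$-almost all $\om$. On the other hand, the computation in the proof of Theorem~\ref{mainthm} and its word-analogue Theorem~\ref{mainthm1} shows that $\freq_{g(\om)}(u)=Q(uX^\nn)$ for $\mu$-almost all $\om$, where $Q$ is the shift-invariant measure introduced before Lemma~\ref{PQ}; because $\cA$ is $L$-strongly connected, $T=T_{L,\cA}$ is irreducible by Lemma~\ref{markovirr}, so $Q$ is ergodic by Lemma~\ref{PQ}. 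Thus if $Q(uX^\nn)\neq\mu(uX^\nn)$ for even a single $u$, then $\freq_{g(\om)}(u)\neq\freq_\om(u)$ for $\mu$-almost all $\om$, and Lemma~\ref{sing-2freq} yields that $\mu$ and $g_*\mu$ are singular. It therefore remains to treat the case in which $Q$ and $\mu$ agree on every cylinder, i.e. $Q=\mu$. Notice that the hypothesis $\bt=\bk\otimes\bl$ has not yet been used; it enters only now.

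So assume $Q=\mu$. Here I would cash in $\bt=\bk\otimes\bl$. Writing $s_*\mu$ for the pushforward of $\mu$ by the action of the state $s$, the fiber decomposition of $P$ over the cylinders $(g,x)(S\times X)^\nn$ together with $\bt(s,x)=\bk_s\bl_x$ gives the clean identity $Q=\sum_{s\in S}\bk_s\,s_*\mu$, where each $s_*\mu$ is absolutely continuous with respect to $Q$ by Lemma~\ref{gsmuQ} (applied with initial state $s$; here $\bt>0$ because $T$ is irreducible). Since $\cA$ is strongly connected, $K=K_{\bl,\cA}$ is irreducible and $\bk$ is strictly positive, so in the present case $\mu=\sum_{s}\bk_s\,s_*\mu$ exhibits $\mu$ as a convex combination, with strictly positive weights, of measures all absolutely continuous with respect to $\mu$; in particular $g_*\mu\ll\mu$. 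What must still be shown is that this forces $g_*\mu=\mu$, and it is enough to prove the stronger (and more symmetric) statement that $s_*\mu=\mu$ for \emph{every} state $s$ — natural because the limiting frequencies, hence $Q$, do not depend on the initial state.

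For this last step I would use invertibility. As $\cA$ is strongly connected and one state acts invertibly, every state does, so from a state $s$ and an output letter $y$ one recovers the input letter $x(s,y)$ (the unique $x$ with $\la(s,x)=y$) and the next state $\Psi(s,y)=\pi(s,x(s,y))$; iterating, the entire state trajectory is a deterministic function of the initial state and the output sequence. Consequently $\tilde\om\mapsto(s_0,\tilde\lambda(\tilde\om))$ is a bijection of $(S\times X)^\nn$ onto $S\times X^\nn$ conjugating $\si$ to the skew product $(s,v)\mapsto(\Psi(s,v_1),\si v)$ over $(X^\nn,\mu,\si)$, under which $P$ becomes a $\si$-invariant ergodic measure projecting to $Q=\mu$; the desired $s_*\mu=\mu$ for all $s$ is exactly the assertion that, under $P$, the initial state $s_0$ is independent of the output $\tilde\lambda$. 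The hard part will be proving this independence: I would show that the conditional law $v\mapsto\bigl(P(s_0=s\mid\tilde\lambda=v)\bigr)_{s}$ is a shift-invariant section of the skew product and then use ergodicity of the base together with $L$-strong connectivity to propagate the equality $s_*\mu=\mu$ across all states; an alternative is an induction on $|u|$ based on the restriction identity, which reduces $s_*\mu(uX^\nn)$ to $\pi(s,x)_*$ applied to conditional Markov measures. The genuine obstacle, absent from Kravchenko's Bernoulli case, is the memory of the Markov measure: the conditional measures differ from $\mu$, so the recursion does not close on $\mu$ alone, and the positivity of $\bl$ and the connectivity hypothesis are precisely what is needed to disaggregate the single identity $\mu=\sum_s\bk_s\,s_*\mu$ into equality of each summand with $\mu$.
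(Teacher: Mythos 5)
Your overall architecture is sound and close to the paper's: compare $\freq_{g(\om)}(u)$ (which by Theorem \ref{mainthm1} equals $Q(uX^\nn)$) with $\freq_\om(u)=\mu(uX^\nn)$, invoke Lemma \ref{sing-2freq} when they differ, and use $\bt=\bk\otimes\bl$ together with invertibility to get the identity $Q(uX^\nn)=\sum_{s\in S}\bk_s\,\mu\bigl(s^{-1}(u)X^\nn\bigr)$, i.e.\ $Q=\sum_s\bk_s\,s_*\mu$ with all $\bk_s>0$. Up to that point everything you write is correct. But the proof then stalls exactly at the step the whole lemma hinges on: you must show that the single identity $\mu=\sum_s\bk_s\,s_*\mu$ (in the case $Q=\mu$) forces $s_*\mu=\mu$ for every $s$, and a convex combination with positive weights equalling $\mu$ does \emph{not} in general force each summand to equal $\mu$. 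You acknowledge this ("the hard part will be proving this independence"), but the routes you sketch do not close the gap: the claimed independence of the initial state from the output under $P$ is literally equivalent to $s_*\mu=Q$ for all $s$, which is the statement to be proved (and is false without the hypothesis $Q=\mu$, since $g_*\mu$ is generally not even shift-invariant), and the proposed induction on $|u|$ is not carried out.

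The paper closes this gap with a short extremal argument that you are missing. Suppose $g_*\mu\ne\mu$, so $\mu\bigl(s^{-1}(u)X^\nn\bigr)\ne\mu(uX^\nn)$ for some state $s$ and some word $u$ of length $k$. Among all such words of length $k$, choose $u$ maximizing $\mu(uX^\nn)$. Because each state acts as a permutation of the (finite) set of words of length $k$, the orbit $u, s^{-1}(u), s^{-2}(u),\dots$ is periodic; if $\mu\bigl(s^{-1}(u)X^\nn\bigr)>\mu(uX^\nn)$ then maximality would force the measure to stay at that larger value along the whole forward orbit, contradicting the return to $u$. Hence $\mu\bigl(s^{-1}(u)X^\nn\bigr)\le\mu(uX^\nn)$ for every $s$, with at least one strict inequality, so $Q(uX^\nn)=\sum_s\bk_s\,\mu\bigl(s^{-1}(u)X^\nn\bigr)<\mu(uX^\nn)$; Lemma \ref{sing-2freq} then gives singularity directly, with no need for a separate $Q=\mu$ case. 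This periodicity-plus-maximality step is the genuine content beyond what you have written, and it is also where invertibility is used in an essential way (each state must permute the length-$k$ words). Your reduction is compatible with it --- the same extremal argument shows that $\sum_s\bk_s\,s_*\mu=\mu$ forces $s_*\mu=\mu$ on all cylinders --- but without it the proof is incomplete.
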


\begin{proof}
Since $g$ is invertible, all restriction of $g$ are invertible as well.  
Since the automaton $\cA$ is strongly connected, every state $s\in S$ is a 
restriction of $g$.  Both the action of $s$ on $X^\nn$ and on $X^*$ are 
invertible.  We denote by $s^{-1}$ the inverses of both actions.

Assume $g_*\mu\ne\mu$.  Then $g_*\mu(wX^\nn)\ne\mu(wX^\nn)$ for some 
nonempty word $w\in X^*$.  Note that $g_*\mu(wX^\nn)=\mu(g^{-1}(wX^\nn)) 
=\mu\bigl(g^{-1}(w)X^\nn\bigr)$.  Let $k$ denote the length of $w$.  
Consider all words $u\in X^*$ of length $k$ such that 
$\mu\bigl(s^{-1}(u)X^\nn\bigr)\ne \mu(uX^\nn)$ for some $s\in S$ (one such 
word is $w$) and choose among them one with the largest value of 
$\mu(uX^\nn)$.  We claim that $\mu\bigl(s^{-1}(u)X^\nn\bigr)\le 
\mu(uX^\nn)$ for all $s\in S$ (by the choice of $u$, at least one of these 
inequalities is going to be strict).  Indeed, take any $s\in S$ and let 
$u^{(0)}=u$, $u^{(1)},u^{(2)},\dots$ be a sequence of words such that
$u^{(n+1)}=s^{-1}(u^{(n)})$ for all $n\ge0$.  Since the state $s$ acts as a 
permutation on the finite set of all words of length $k$, it follows that 
the sequence is periodic.  If $\mu(u^{(n)}X^\nn)>\mu(uX^\nn)$ for some $n$, 
then $\mu(u^{(n+1)}X^\nn)=\mu(u^{(n)}X^\nn)$ due to the choice of $u$.  
Therefore $\mu(u^{(1)}X^\nn)>\mu(uX^\nn)$ would imply $\mu(u^{(n)}X^\nn)= 
\mu(u^{(1)}X^\nn)>\mu(uX^\nn)$ for all $n\ge1$, which is not the case as 
$u$ occurs infinitely often in the sequence.

Let $u=u_1u_2\ldots u_k$, where each $u_i\in X$.  By Theorem 
\ref{mainthm1}, for $\mu$-almost all $\om\in X^\nn$ we have
\[
\freq_{g(\om)}(u)=\sum_{s_0\arr{x_0}{u_1} s_1\ldots\arr{x_{k-1}}{u_k} s_k}
\bt_{(s_0, x_0)} L_{x_0x_1}\ldots L_{x_{k-2}x_{k-1}},
\]
where the sum is over paths in the Moore diagram of $\cA$.  Let $\Sigma$ 
denote the value of the sum.  Since $\bt=\bk\otimes\bl$, we have
\[
\bt_{(s_0, x_0)} L_{x_0x_1}\ldots L_{x_{k-2}x_{k-1}}=
\bk_{s_0}\bl_{x_0} L_{x_0x_1}\ldots L_{x_{k-2}x_{k-1}}=
\bk_{s_0}\mu(x_0x_1\ldots x_{k-1}X^\nn).
\]
For any choice of $s_0$ the Moore diagram of $\cA$ admits a unique path of 
the form \,$s_0\arr{x_0}{u_1} s_1\ldots \arr{x_{k-1}}{u_k} s_k$, with 
$x_0x_1\ldots x_{k-1}=s_0^{-1}(u)$.  It follows that
\[
\Sigma=\sum_{s\in S}\bk_s\mu\bigl(s^{-1}(u)X^\nn\bigr).
\]
Since the automaton $\cA$ is strongly connected, the stochastic matrix 
$K_{\bl,\cA}$ is irreducible.  Therefore the vector $\bk$ is positive.  By 
the above, $\mu\bigl(s^{-1}(u)X^\nn\bigr)\le\mu(uX^\nn)$ for all $s\in S$.  
Moreover, at least one of these inequalities is strict.  It follows that 
$\Sigma<\sum_s \bk_s\mu(uX^\nn)=\mu(uX^\nn)$.  In particular, 
$\freq_{g(\om)}(u)<\mu(uX^\nn)$ for $\mu$-almost all $\om\in X^\nn$.  On 
the other hand, $\freq_\om(u)=\mu(uX^\nn)$ for $\mu$-almost all $\om\in 
X^\nn$ due to the Birkhoff ergodic theorem.  Now Lemma \ref{sing-2freq} 
implies that the measures $\mu$ and $g_*\mu$ are singular.
\end{proof}

\begin{theorem}\label{sing-mainB}
Let $\mu$ be a Bernoulli measure on $X^\nn$ defined by a positive 
probability vector $\bl$.  Suppose $g:X^\nn\to X^\nn$ is an invertible 
transformation generated by a strongly connected automaton 
$\cA=(X,S,\pi,\la)$.  Then the measures $\mu$ and $g_*\mu$ are singular 
unless $\bl_{\la(s,x)}=\bl_x$ for all $s\in S$ and $x\in X$, in which case 
$g_*\mu=\mu$.
\end{theorem}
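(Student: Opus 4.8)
The plan is to recognize the Bernoulli measure $\mu$ as a special case of the Markov measures already handled in Lemma \ref{sing-tensor}, and then to read off the precise dividing line between the singular and the coincident cases from Lemma \ref{sing-exceptB}. A Bernoulli measure with probability vector $\bl$ is exactly the Markov measure defined by the stochastic matrix $L$ all of whose rows equal $\bl$. Since $\bl$ is positive, every entry of $L$ is positive, so $L$ is irreducible and the associated Markov chain has no forbidden words. First I would use this last observation to argue that, in the Bernoulli case, the automaton $\cA$ is $L$-strongly connected precisely when it is strongly connected: the extra requirement in the definition of $L$-strong connectivity, that the connecting word avoid forbidden transitions, is vacuous when $L$ has no zero entries.

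Next I would establish the tensor decomposition $\bt=\bk\otimes\bl$ needed to invoke Lemma \ref{sing-tensor}. Because $\cA$ is $L$-strongly connected, Lemma \ref{markovirr} shows that the matrix $T=T_{L,\cA}$ is irreducible, so its stationary probability vector $\bt$ is unique. Likewise, the matrix $K=K_{\bl,\cA}$ is irreducible (as $\bl$ is positive and $\cA$ is strongly connected), giving a unique stationary vector $\bk$. Lemma \ref{tensorB} then tells us that $\bk\otimes\bl$ is \emph{a} stationary probability vector of $T$, and by uniqueness it must equal $\bt$.

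With $\bt=\bk\otimes\bl$ in hand, Lemma \ref{sing-tensor} applies verbatim and yields the dichotomy that $\mu$ and $g_*\mu$ are either singular or the same. It remains only to determine which alternative occurs, and this is exactly the content of Lemma \ref{sing-exceptB}: for a Bernoulli measure with positive $\bl$ and an invertible $g$ generated by a strongly connected automaton, $g_*\mu=\mu$ holds if and only if $\bl_{\la(s,x)}=\bl_x$ for all $s\in S$ and $x\in X$. Combining the two lemmas, the measures coincide when this symmetry condition holds and are singular otherwise, which is precisely the assertion of the theorem.

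I expect the only genuinely delicate point to be the bookkeeping that upgrades Lemma \ref{tensorB} (which merely exhibits $\bk\otimes\bl$ as \emph{some} stationary vector) into the identity $\bt=\bk\otimes\bl$; this upgrade rests entirely on the uniqueness of the stationary vector of an irreducible stochastic matrix, so the real burden is to confirm irreducibility of both $T$ and $K$ in the Bernoulli setting. Everything else is a direct appeal to the preceding results.
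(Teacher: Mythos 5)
Your proposal is correct and follows essentially the same route as the paper's own proof: regard $\mu$ as a Markov measure with all rows of $L$ equal to $\bl$, note $L$-strong connectivity is automatic, use irreducibility of $T_{L,\cA}$ and Lemma \ref{tensorB} to get $\bt=\bk\otimes\bl$, then apply Lemmas \ref{sing-tensor} and \ref{sing-exceptB}. The uniqueness bookkeeping you flag as the delicate point is handled in the paper exactly as you describe.
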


\begin{proof}
The measure $\mu$ can be regarded as a Markov measure defined by a 
stochastic matrix $L$ each row of which coincides with $\bl$.  Since all 
entries of $L$ are positive, the automaton $\cA$ is $L$-strongly connected.
By Lemma \ref{markovirr}, the stochastic matrix $T_{L,\cA}$ defined in 
\eqref{eq:T} is irreducible.  Therefore its stationary probability vector 
$\bt$ is unique.  Lemma \ref{tensorB} implies that $\bt=\bk\otimes\bl$, 
where $\bk$ is the stationary probability vector of the stochastic matrix 
$K_{\bl,\cA}$ defined in \eqref{eq:K}.  By Lemma \ref{sing-tensor}, the 
measures $\mu$ and $g_*\mu$ are either singular or the same.  It follows 
from Lemma \ref{sing-exceptB} that $g_*\mu=\mu$ if and only if 
$\bl_{\la(s,x)}=\bl_x$ for all $s\in S$ and $x\in X$.
\end{proof}

\begin{example}
Let $X=\{1,2,3\}$ and $\mu$ be a Bernoulli measure on $X^\nn$ defined by a 
probability vector $\bl=(1/2,1/4,1/4)$.  Let $\cA=(X,\{s_0,s_1\},\pi,\la)$, 
where $\pi(s_i,x)=s_{1-i}$ for all $x\in X$ and $i\in\{0,1\}$, 
$\lambda(s_0,1)=2$, $\lambda(s_1,1)=3$, and $\lambda(s_i,x)=1$ for 
$x\in\{2,3\}$ and $i\in\{0,1\}$.  Let $g$ be either of the two states of 
the automaton $\cA$.  Then for $\mu$-almost all $\om\in X^\nn$ any symbol 
$x\in X$ occurs with the same frequency $\bl_x$ in $\om$ and $g(\om)$.  If 
$g$ were invertible, this would imply $g_*\mu=\mu$.  In fact, the measures 
$\mu$ and $g_*\mu$ are singular, but we need to look at words of length $2$ 
to be able to apply Lemma \ref{sing-2freq}.  Indeed, $22$ and $33$ occur 
with the same frequency $1/16$ in a $\mu$-generic sequence $\om$ while not 
occurring at all in $g(\om)$. \tri
\end{example}

In view of the previous example, we should expect the measures $\mu$ and 
$g_*\mu$ to be singular even if $g$ is not invertible.  There are 
exceptions, of course.

\begin{example}
Let $X$ be any alphabet of more than one character.  For any $x\in X$ and 
$\om\in X^\nn$ let $g_x(\om)=x\om$.  All transformations $g_x$, $x\in X$ 
can be generated by a single automaton $\cA=(X,S,\pi,\la)$, where 
$S=\{g_x\mid x\in X\}$, $\pi(g_x,y)=g_y$ and $\la(g_x,y)=x$ for all $x,y\in 
X$.  If $\mu$ is a Bernoulli measure on $X^\nn$ defined by a positive 
probability vector $\bl$, then $\mu=\sum_x \bl_x\,(g_{x})_*\mu$.  As a 
consequence, each measure $(g_{x})_*\mu$ is absolutely continuous with 
respect to $\mu$ while not the same as $\mu$. \tri
\end{example}

To prove an analogue of Theorem \ref{sing-mainB} for general Markov 
measures, we need first to establish an analogue of Lemma 
\ref{sing-exceptB}.

\begin{lemma}\label{sing-exceptM}
Let $\mu$ be a Markov measure on $X^\nn$ defined by an irreducible 
stochastic matrix $L$ with stationary probability vector $\bl$ and 
$g:X^\nn\to X^\nn$ be an invertible transformation generated by an 
$L$-strongly connected automaton $\cA=(X,S,\pi,\la)$.  Then the following 
conditions are equivalent: (i) $g_*\mu=\mu$; (ii) $\bl_{x'}=\bl_x$ whenever 
$\la(g,x)=x'$, and $L_{x'y'}=L_{xy}$ whenever $\la(s,xy)=x'y'$ for some 
$s\in S$.
\end{lemma}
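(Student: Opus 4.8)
The plan is to prove the two implications separately, following the template of Lemma \ref{sing-exceptB} but accounting both for the two-step factorization of a Markov cylinder and, crucially, for the presence of forbidden words.

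First I would establish (ii) $\Rightarrow$ (i) by checking that $\mu$ and $g_*\mu$ agree on every cylinder. Fix a word $u=x_1x_2\ldots x_k$ and write $g^{-1}(u)=y_1y_2\ldots y_k$, so that
\[
g_*\mu(uX^\nn)=\mu\bigl(g^{-1}(u)X^\nn\bigr)=\bl_{y_1}L_{y_1y_2}\ldots L_{y_{k-1}y_k}.
\]
Putting $s_1=g$ and $s_i=\pi(g,y_1\ldots y_{i-1})$ for $i\ge2$, we have $x_i=\la(s_i,y_i)$ and hence $\la(s_i,y_iy_{i+1})=x_ix_{i+1}$. The first clause of (ii) gives $\bl_{x_1}=\bl_{y_1}$, and the second gives $L_{x_ix_{i+1}}=L_{y_iy_{i+1}}$ for each $i$; multiplying these equalities yields $\mu(uX^\nn)=g_*\mu(uX^\nn)$. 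This argument handles forbidden words automatically, since a vanishing factor on one side forces its counterpart on the other side to vanish. Agreement on all cylinders then gives $g_*\mu=\mu$.

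For the converse (i) $\Rightarrow$ (ii) I would use invertibility to identify preimages of cylinders under $g$. For the first clause, since $\la(g,\cdot)$ is a permutation of $X$, the relation $\la(g,x)=x'$ gives $g^{-1}(x'X^\nn)=xX^\nn$, whence $\bl_{x'}=\mu(x'X^\nn)=g_*\mu(x'X^\nn)=\mu(xX^\nn)=\bl_x$. For the second clause, suppose $\la(s,xy)=x'y'$, that is, $x'=\la(s,x)$ and $y'=\la(\pi(s,x),y)$. The essential step is to produce a word $u$ with $\pi(g,u)=s$ for which $\mu(uxX^\nn)>0$: fixing any leading symbol $a$, the $L$-strong connectivity of $\cA$ applied to the states $g,s$ and the symbols $a,x$ yields a word $w$ with $\pi(g,aw)=s$ and $awx$ not forbidden, so $u=aw$ does the job (the leading factor $\bl_a$ is positive because $\bl$ is). Setting $u'=g(u)$, invertibility gives $g(ux)=u'x'$ and $g(uxy)=u'x'y'$, hence $\mu(uxX^\nn)=\mu(u'x'X^\nn)$ and $\mu(uxyX^\nn)=\mu(u'x'y'X^\nn)$. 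The Markov factorizations $\mu(uxyX^\nn)=\mu(uxX^\nn)L_{xy}$ and $\mu(u'x'y'X^\nn)=\mu(u'x'X^\nn)L_{x'y'}$ then force $L_{xy}=L_{x'y'}$ upon dividing by the nonzero quantity $\mu(uxX^\nn)$.

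The main obstacle is exactly the zero-measure cylinders peculiar to Markov (rather than Bernoulli) measures: one cannot simply grab an arbitrary word reaching $s$ and assume the relevant cylinder has positive measure, as was legitimate in Lemma \ref{sing-exceptB}. Securing a witness word $u$ with $\mu(uxX^\nn)>0$ is precisely what the $L$-strong connectivity hypothesis is engineered to guarantee, and it is what makes the final division valid; it is also the mechanism that lets the equality $L_{xy}=L_{x'y'}$ capture the case where both sides vanish. The remaining work is the routine bookkeeping of matching corresponding factors in the two Markov products.
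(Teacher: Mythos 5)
Your proposal is correct and follows essentially the same route as the paper's proof: matching factors in the two Markov products for (ii)$\Rightarrow$(i), and for (i)$\Rightarrow$(ii) invoking $L$-strong connectivity to produce a word $u$ reaching $s$ with $\mu(uxX^\nn)>0$ (your $u=aw$ is exactly the paper's $x_0x_1\ldots x_{k-1}$) before dividing the two cylinder identities. No gaps.
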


\begin{proof}
Consider an arbitrary word $u=x_1x_2\ldots x_k\in X^*$ and let 
$g^{-1}(u)=y_1y_2\ldots y_k$.  Then $\mu(uX^\nn)= \bl_{x_1}L_{x_1x_2}\dots 
L_{x_{k-1}x_k}$ and $g_*\mu(uX^\nn)=\mu(g^{-1}(uX^\nn))= 
\mu\bigl(g^{-1}(u)X^\nn\bigr)=\bl_{y_1}L_{y_1y_2}\dots L_{y_{k-1}y_k}$.  
Clearly, $x_1=\la(g,y_1)$ and $x_1x_2=\la(g,y_1y_2)$.  Besides, 
$x_ix_{i+1}=\la(s_i,y_iy_{i+1})$ for $2\le i\le k-1$, where 
$s_i=\pi(g,y_1y_2\ldots y_{i-1})$ .  Assuming the condition (ii) holds, we 
obtain that $\bl_{x_1}=\bl_{y_1}$ and $L_{x_ix_{i+1}}=L_{y_iy_{i+1}}$ for 
$1\le i\le k-1$.  Then $\mu(uX^\nn)=g_*\mu(uX^\nn)$.  Thus the measures 
$\mu$ and $g_*\mu$ coincide on the cylinders, which implies that 
$g_*\mu=\mu$.

Conversely, assume that $g_*\mu=\mu$.  If $\la(g,x)=x'$ for some $x,x'\in 
X$, then $\bl_{x'}=\mu(x'X^\nn)=g_*\mu(x'X^\nn)=\mu(xX^\nn)=\bl_x$.  Now 
suppose $\la(s,xy)=x'y'$ for some $s\in S$.  Since the automaton $\cA$ is 
$L$-strongly connected, there exist symbols $x_0,x_1,\dots,x_k=x$ ($k\ge1$) 
such that $\pi(g,x_0x_1\ldots x_{k-1})=s$ and $L_{x_ix_{i+1}}>0$ for $0\le 
i\le k-1$.  Note that the vector $\bl$ is positive since the stochastic 
matrix $L$ is irreducible.  Therefore $\mu(x_0x_1\ldots x_kX^\nn)= 
\bl_{x_0}L_{x_0x_1}\dots L_{x_{k-1}x_k}>0$.  Let $u=x_0x_1\ldots x_{k-1}$ 
and $u'=g(u)$.  Since $\pi(g,u)=s$, we have $g(ux)=u'x'$ and 
$g(uxy)=u'x'y'$.  As a consequence, $\mu(uxX^\nn)=g_*\mu(u'x'X^\nn) 
=\mu(u'x'X^\nn)$ and $\mu(uxyX^\nn)=g_*\mu(u'x'y'X^\nn)=\mu(u'x'y'X^\nn)$.  
By definition of the measure $\mu$, we have 
$\mu(uxyX^\nn)=\mu(uxX^\nn)L_{xy}$ and 
$\mu(u'x'y'X^\nn)=\mu(u'x'X^\nn)L_{x'y'}$.  By the above the measure 
$\mu(uxX^\nn)=\mu(u'x'X^\nn)$ is not zero.  It follows that 
$L_{x'y'}=L_{xy}$.
\end{proof}

\begin{theorem}\label{sing-mainM}
Let $\mu$ be a Markov measure on $X^\nn$ defined by an irreducible 
stochastic matrix $L$ with stationary probability vector $\bl$.  Suppose 
$g:X^\nn\to X^\nn$ is an invertible transformation generated by a 
reversible, $L$-strongly connected automaton $\cA=(X,S,\pi,\la)$.  Then the 
measures $\mu$ and $g_*\mu$ are singular unless $\bl_{\la(g,x)}=\bl_x$ and 
$L_{\la(s,x),\,\la(\pi(s,x),y)}=L_{x,y}$ for all $s\in S$ and $x,y\in X$, 
in which case $g_*\mu=\mu$.
\end{theorem}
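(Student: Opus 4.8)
The plan is to mirror the proof of Theorem \ref{sing-mainB} almost verbatim, substituting the two Markov-adapted ingredients that have already been prepared: Lemma \ref{tensor-rev} (which supplies the tensor decomposition $\bt=\bk\otimes\bl$ from reversibility alone, with no positivity assumption on $L$) in place of Lemma \ref{tensorB}, and Lemma \ref{sing-exceptM} (the Markov characterization of when $g_*\mu=\mu$) in place of Lemma \ref{sing-exceptB}. The engine of the argument, Lemma \ref{sing-tensor}, already accepts an arbitrary irreducible $L$; its only nontrivial hypothesis is $\bt=\bk\otimes\bl$, so the whole theorem reduces to verifying that hypothesis and then reading off the exceptional case.

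First I would establish $\bt=\bk\otimes\bl$. Because $\cA$ is $L$-strongly connected, Lemma \ref{markovirr} tells us that the matrix $T=T_{L,\cA}$ is irreducible, so its stationary probability vector $\bt$ is unique. Because $\cA$ is reversible, Lemma \ref{tensor-rev} tells us that $\bk\otimes\bl$ is a stationary probability vector of $T$, where $\bk=\frac{1}{|S|}(1,\ldots,1)$. By uniqueness these two vectors coincide, which is exactly the hypothesis needed. With $\mu$ defined by the irreducible matrix $L$, with $g$ invertible and generated by the $L$-strongly connected automaton $\cA$, and with $\bt=\bk\otimes\bl$ in hand, Lemma \ref{sing-tensor} applies directly and yields the dichotomy: $\mu$ and $g_*\mu$ are either singular or the same.

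It remains to decide which alternative holds, and here I would invoke Lemma \ref{sing-exceptM}, whose hypotheses ($\mu$ Markov with irreducible $L$, and $g$ invertible generated by an $L$-strongly connected automaton) are precisely those of the theorem. That lemma gives $g_*\mu=\mu$ if and only if its condition (ii) holds: $\bl_{x'}=\bl_x$ whenever $\la(g,x)=x'$, and $L_{x'y'}=L_{xy}$ whenever $\la(s,xy)=x'y'$ for some $s\in S$. Unpacking the two-letter recursion $\la(s,xy)=\la(s,x)\,\la(\pi(s,x),y)$, the pair $(x',y')$ equals $(\la(s,x),\la(\pi(s,x),y))$, so condition (ii) reads exactly $\bl_{\la(g,x)}=\bl_x$ and $L_{\la(s,x),\,\la(\pi(s,x),y)}=L_{x,y}$ for all $s\in S$ and $x,y\in X$, matching the statement of the theorem. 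Thus singularity holds unless these equalities hold, in which case $g_*\mu=\mu$. I expect no genuine obstacle in this assembly: all the analytic content lives in the earlier lemmas, so the only care required is to confirm that each lemma's hypotheses are met and that the translation between condition (ii) and the theorem's stated condition is faithful.
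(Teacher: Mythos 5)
Your proposal is correct and follows exactly the route the paper takes: the paper's own proof of this theorem is literally the one-line instruction to repeat the proof of Theorem \ref{sing-mainB} with Lemma \ref{tensor-rev} in place of Lemma \ref{tensorB} and Lemma \ref{sing-exceptM} in place of Lemma \ref{sing-exceptB}, which is precisely the assembly you carry out (including the uniqueness of $\bt$ via Lemma \ref{markovirr} and the translation of condition (ii)). No gaps.
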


\begin{proof}
The theorem is proved in the same way as Theorem \ref{sing-mainB} but 
instead of Lemmas \ref{tensorB} and \ref{sing-exceptB}, one has to use 
respectively Lemmas \ref{tensor-rev} and \ref{sing-exceptM}.
\end{proof}

\newpage

\begin{raggedright}
\sc
Department of Mathematics\\
Mailstop 3368\\
Texas A\&M University\\
College Station, TX 77843-3368\\[3mm]
\emph{Email:\/} {\tt grigorch@math.tamu.edu}, {\tt 
romwell@math.tamu.edu},\\ 
{\tt yvorobet@math.tamu.edu}
\end{raggedright}

\end{document}